\definecolor{red}{rgb}{1,0,0}
\definecolor{blue}{rgb}{0,0,1}
\definecolor{green}{rgb}{0,.6,.4}
\numberwithin{figure}{section}
\newtheorem{thm}{Theorem}[section]
\newtheorem{cor}[thm]{Corollary}
\newtheorem{lem}[thm]{Lemma}
\newtheorem{prop}[thm]{Proposition}
\newtheorem{obs}[thm]{Observation}
\newtheorem{quest}[thm]{Question}
\theoremstyle{definition}
\newtheorem{rem}[thm]{Remark}
\theoremstyle{definition}
\theoremstyle{definition}
\newtheorem{ex}[thm]{Example}
\def\mtx#1{\begin{bmatrix} #1 \end{bmatrix}}
\def\Pk#1{P^{(#1)}}
\newcommand{\bq}{{\bf q}}
\newcommand{\pr}{\mathbf{Pr}}
\newcommand{\e}{\mathbf{E}}
\newcommand{\var}{\mathbf{Var}}
\newcommand{\ept}{\operatorname{ept}}
\newcommand{\rad}{\operatorname{rad}}
\newcommand{\Z}{\operatorname{Z}}
\newcommand{\thr}{\operatorname{th}}
\newcommand{\thpf}{\operatorname{th}_{\rm pzf}}
\newcommand{\thp}{\operatorname{th}_+}
\newcommand{\pt}{\operatorname{pt}}
\newcommand{\ptpf}{\pt_{\rm pzf}} 
\newcommand{\ptp}{\operatorname{pt}_+}
\newcommand{\dist}{\operatorname{dist}}
\newcommand{\bit}{\begin{itemize}}
\newcommand{\eit}{\end{itemize}}
\newcommand{\ben}{\begin{enumerate}}
\newcommand{\een}{\end{enumerate}}
\newcommand{\beq}{\begin{equation}}
\newcommand{\eeq}{\end{equation}}
\newcommand{\bea}{\begin{eqnarray*}} 
\newcommand{\eea}{\end{eqnarray*}}
\newcommand{\bpf}{\begin{proof}}
\newcommand{\epf}{\end{proof}\ms}
\newcommand{\bmt}{\begin{bmatrix}}
\newcommand{\emt}{\end{bmatrix}}
\newcommand{\ms}{\medskip}
\newcommand{\lc}{\left\lceil}
\newcommand{\rc}{\right\rceil}
\newcommand{\lf}{\left\lfloor}
\newcommand{\rf}{\right\rfloor}
\newcommand{\lp}{\!\left(}
\newcommand{\rp}{\right)}
\newcommand{\noi}{\noindent}
\newcommand{\ol}{\overline}
\title{Propagation time for probabilistic zero forcing}
\author{Jesse Geneson\thanks{Department of Mathematics, Iowa State University,
Ames, IA 50011, USA (geneson@iastate.edu).}\and Leslie Hogben\thanks{Department of Mathematics, Iowa State University,
Ames, IA 50011, USA (hogben@iastate.edu) and American Institute of Mathematics, 600 E. Brokaw Road, San Jose, CA 95112, USA (hogben@aimath.org).}}
\begin{document}


\maketitle\vspace{-10pt}

 \begin{abstract} Zero forcing is a coloring game played on a graph that was introduced more than ten years ago  in several different applications.  The goal is to color all the vertices blue by repeated use of a (deterministic) color change rule.  Probabilistic zero forcing was introduced by Kang and Yi in [Probabilistic zero forcing in graphs, {\em Bull. Inst. Combin. Appl.} 67 (2013), 9--16] and yields a discrete dynamical system, which is a better model for some applications.  Since in a connected graph any one vertex can eventually color the entire graph blue using probabilistic zero forcing, the expected time to do this is a natural parameter to study. We determine expected propagation time exactly for paths and cycles, establish the asymptotic value for stars, and present asymptotic  upper and lower bounds for any graph in terms of its radius and  order.   We apply these results to obtain values and bounds on $\ell$-round probabilistic  zero forcing, throttling number for probabilistic zero forcing, and confidence levels for propagation time.   \end{abstract}

\noi {\bf Keywords} probabilistic zero forcing, expected propagation time, $\ell$-round probability, confidence propagation, throttling

\noi{\bf AMS subject classification} 05D40, 05C57, 05C15, 05C12


\section{Introduction}\label{s:intro}

Zero forcing is a coloring process on a graph that  was introduced independently in the study of control of quantum systems in mathematical physics \cite{graphinfect} and the maximum nullity problem in combinatorial matrix theory \cite{AIM08}. It was later observed to have connections with graph searching \cite{Y13} and power domination \cite{PDZ}.  Variants of zero forcing  and derived parameters such as propagation time and throttling have also been studied (see, for example, \cite{AIM17}). 

Zero forcing on a graph $G$ is described by the following (standard) zero forcing color change rule: Given a set $B$ of vertices of $G$ that are colored blue with the remaining vertices colored white, a blue vertex $u$ can change the color of (force) a white vertex $w$ to blue if $w$ is the only white neighbor of $u$; this is denoted by $u\to w$.  A zero forcing set of $G$ is a set $Z\subseteq V(G)$ of vertices such that when the vertices of $Z$ are colored blue and the remaining vertices are colored white, every vertex can eventually be colored blue by repeated applications of the color change rule. The zero forcing number of $G$, $\Z(G)$, is the minimum cardinality of a zero forcing set.  Throughout this paper the terms {\em zero forcing}, {\em zero forcing color change rule}, and {\em zero forcing set} will refer to  the definitions just given; a force performed using the zero forcing color change rule is also called a {\em deterministic force}.

Probabilistic zero forcing was introduced by Kang and Yi in \cite[Definition 1.1]{KY13}.  
Given a  set $B$ of currently blue vertices, in one round each blue vertex $u\in B$ {\em fires} at, i.e., attempts to force (change the color to blue), each of its white neighbors $w\in \ol B$ independently with  probability   
\beq\label{eq:pforce}\pr(u\to w)=\frac {|N[u]\cap B|}{\deg u}.\eeq The coloring rule just described is    
the {\em probabilistic color change rule} (in \cite{KY13} $\pr(u\to w)$ is denoted by $F(u\to w)$).  {\em Probabilistic zero forcing} refers to the process of coloring a graph blue by repeatedly applying the probabilistic color change rule.
As noted in \cite{KY13},  the definition of probability of a force in \eqref{eq:pforce} has the property that a deterministic force will be performed with probability one.  

The probabilistic color change rule produces a discrete dynamical system that plausibly describes many applications.  For example, zero forcing is sometimes used to model rumor spreading in social networks, and given human nature a probabilistic model is more realistic. A probabilistic model is also more realistic for the spread of infection among a population, or the spread of a computer virus in a network.

Probabilistic zero forcing also presents an entirely new perspective on  graph coloring, and it is necessary to revise the parameters of interest. 
For zero forcing, determining the minimum number of vertices needed to color the graph blue 
is a main research question (on a connected graph of order at least three that is not a path, no one vertex is a zero forcing set). 
However,  in probabilistic zero forcing {\em any} one vertex in a connected graph can eventually force the entire graph blue.
Since a minimum zero forcing set is not of interest, it is natural to ask what parameter(s) are most interesting to study for probabilistic zero forcing. 

Kang and Yi define the probability $P_A(G)$ as follows: Let $k_o$ be the first round in which it is possible to have a deterministic zero forcing set colored blue, starting with exactly the vertices in $A$ colored blue. Define $P_A(G)$ to be the probability that a deterministic zero forcing set has been colored blue in round $k_o$. We discuss this parameter in Section \ref{s:PAG}, where we present a counterexample to one of the properties claimed for it in \cite{KY13}.    

A more natural object of study is the expected number of rounds needed to color all vertices blue with a given starting set of vertices, especially starting with a single vertex; this is the parameter studied in Section \ref{s:ept}.
In the deterministic case, the {\em propagation time} $\pt(G,Z)$  of a zero forcing set $Z$  for $G$ is the number of time steps needed to color all vertices blue, performing independent forces simultaneously at each time step. The {\em propagation time} $\pt(G)$ of a  graph $G$ is the minimum of $\pt(G,Z)$ over all minimum zero forcing sets $Z$. 

 We can recast the definition of zero forcing in parallel with that of probabilistic zero forcing, which is particularly useful for defining a time step in the study of propagation time.
Given a  set $B$ of currently blue vertices, in one time step (analogous to a round) each blue vertex $u\in B$ {fires} at, i.e., attempts to force (change the color to blue), each of its white neighbors $w\in \ol B$ independently with  probability   
\beq\label{eq:force}\pr(u\to w)=\begin{cases} 
1 & \mbox{if $w$ is the only white neighbor of $u$}\\
0 &  \mbox{otherwise}
\end{cases}.\eeq

The {\em probabilistic  propagation time} of a nonempty set  $Z$ of vertices of a connected graph $G$, $\ptpf(G,Z)$, is a random variable that reflects the time (number of the round) at which the last white vertex turns blue when applying a probabilistic zero forcing process starting with the set $Z$ blue (if $G$ is not connected, we assume $Z$ contains at least one vertex from each connected component of $G$). 
For a graph $G$ of order $n$ and a set $Z\subseteq V(G)$ of vertices, the {\em expected propagation time of $Z$ for $G$}  is the expected value of the  propagation time of $Z$, i.e., 
\[\ept(G,Z)=\e[\ptpf(G,Z)].\]
  The {\em expected propagation time} of a connected graph $G$ is the minimum of the expected propagation time of $Z$ for $G$ over all one vertex sets $Z$, i.e., 
\[\ept(G)=\min\{\ept(G,\{v\}):v\in V(G)\}.\]
In Section \ref{s:ept} we determine $\ept(G)$ exactly when $G=P_n$ (a path on $n$ vertices) or $G=C_n$ (a cycle on $n$ vertices).  We also determine asymptotic upper and lower bounds on expected propagation time and apply them to additional families of graphs.
 
Aazami introduced the study of {\em $\ell$-round zero forcing} in \cite{Aaz08, Aaz10} (an $\ell$-round zero forcing set can force the entire graph blue in at most $\ell$ time steps).   In Section \ref{s:ell-round} we define $\ell$-round probabilistic zero forcing as the probability of all vertices being colored blue in $\ell$ rounds; this parameter is a possible alternative to  $P_A(G)$,  the parameter introduced by Kang and Yi in \cite{KY13}.
In Section \ref{s:alpha} we approach propagation time from the perspective of confidence level, such as what is the minimum number of rounds need to ensure the entire graph will be blue with probability at least $\alpha$ (e.g., $\alpha=.95$ for a 95\% confidence level).
Many of the results on expected propagation time can be applied to obtain results for  these parameters.

Another parameter of interest in the study of zero forcing and other graph search parameters is throttling number.   Throttling, initially considered in \cite{BY13}, refers to considering a combination of the resources used to accomplish a task and time needed to accomplish the task and has been studied for variants of zero forcing and Cops and Robbers on graphs. For a zero forcing set $Z$ of $G$, $\thr(G,Z)=|Z|+\pt(G,Z)$, and the 
{\em throttling number} of $G$ is $\thr(G)=\min\{\thr(G,Z)\}$.  {\em Probabilistic throttling} is defined using expected propagation time:
\[\thpf(G,Z)=|Z|+\ept(G,Z)\ \mbox{ and }\ \thpf(G)=\min\{\thpf(G,Z)\};\] 
$\thpf(G)$ is called the {\em probabilistic throttling number} and is discussed in Section \ref{s:throt}. Throttling for confidence propagation time is discussed in Section \ref{s:alphath}.  Many of the results on expected propagation time can be applied to the throttling parameters.

We conclude this introduction with some notation that will be used throughout and statements of results from probability theory that will be used repeatedly. The distance between vertices $u$ and $v$ is denoted by $\dist(u,v)$, and $\dist(u,S)=\min_{x\in S}\dist(u,x)$ for $S\subseteq V(G)$.  For   functions $f(n)$ and $g(n)$ from the nonnegative or positive integers to the real numbers,  asymptotic bounds are defined as follows:  $f(n)=o(g(n))$ if $\lim_{n\to\infty} \frac{f(n)}{g(n)}=0$,   $f(n)=O(g(n))$ if there exists $c>0$ such that $f(n)\le cg(n)$ for all $n$ sufficiently large, $f(n)=\omega(g(n))$ if $g(n)=o(f(n))$, $f(n)=\Omega(g(n))$ if $g(n)=O(f(n))$, and  $f(n)=\Theta(g(n))$ if $f(n)=O(g(n))$ and $f(n)=\Omega(g(n))$.

\begin{thm}[Markov's inequality] Let $X$ be a nonnegative random variable.  For any constant $a>0$,
\[\pr(X\ge a)\le \frac{\e[X]}{a}.\]
\end{thm}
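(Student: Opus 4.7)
The plan is to apply the standard indicator-function argument: bound $X$ pointwise below by a scaled indicator of the event $\{X\ge a\}$, and then take expectations.

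Concretely, I would introduce the random variable $Y = a\cdot\mathds{1}_{\{X\ge a\}}$. On the event $\{X\ge a\}$ we have $Y=a\le X$, while on the complementary event $\{X<a\}$ we have $Y=0\le X$, where the last inequality uses the hypothesis that $X$ is nonnegative. Thus $Y\le X$ holds pointwise, so monotonicity of expectation gives $\e[Y]\le\e[X]$. Since $\e[Y]=a\cdot\pr(X\ge a)$, dividing by $a>0$ yields the claimed bound.

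There is no real obstacle here; the only subtle point worth flagging is that nonnegativity of $X$ is genuinely used on the event $\{X<a\}$, since otherwise the pointwise inequality $Y\le X$ could fail. An essentially equivalent route, avoiding the auxiliary variable, is to write the chain $\e[X]\ge \e[X\cdot\mathds{1}_{\{X\ge a\}}]\ge \e[a\cdot\mathds{1}_{\{X\ge a\}}] = a\,\pr(X\ge a)$, where the first inequality again invokes $X\ge 0$ and the second uses $X\ge a$ on the indicated event.
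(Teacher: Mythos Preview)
Your proof is correct and is the standard indicator-function argument for Markov's inequality. Note, however, that the paper does not actually prove this theorem: it is simply stated without proof as a well-known result from probability theory, to be invoked later in the paper. So there is no paper proof to compare your approach against.
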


\begin{thm}[Chebyshev's inequality] Let $X$ be a random variable.  For any constant $c>0$,
\[\pr(|X-\e[X]|\ge c)\le \frac{\var(X)}{c^2}.\]
\end{thm}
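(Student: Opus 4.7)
The plan is to reduce Chebyshev's inequality to Markov's inequality, which was just established. The key observation is that the event $|X-\e[X]|\ge c$ coincides with the event $(X-\e[X])^2\ge c^2$, since squaring is monotone on the nonnegative reals. This lets me replace an event about a possibly signed random variable by an event about a nonnegative random variable, to which Markov's inequality applies directly.

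First I would introduce the nonnegative random variable $Y:=(X-\e[X])^2$ and record the equivalence of events
\[
\{|X-\e[X]|\ge c\}=\{Y\ge c^2\}.
\]
Next I would apply Markov's inequality (the preceding theorem) to $Y$ with the positive constant $a=c^2$, obtaining
\[
\pr(Y\ge c^2)\le \frac{\e[Y]}{c^2}.
\]
Finally I would identify $\e[Y]=\e[(X-\e[X])^2]$ with $\var(X)$ by the definition of variance, which yields the stated bound.

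There is essentially no substantive obstacle: the argument is the standard textbook reduction and consists of a single substitution once Markov's inequality is in hand. The only points deserving a brief comment are implicit finiteness assumptions on $\e[X]$ (needed for the statement to be well-posed) and on $\var(X)$ (if it is infinite the bound is vacuous); neither requires additional work.
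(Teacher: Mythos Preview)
Your proof is correct and is the standard reduction of Chebyshev's inequality to Markov's inequality. The paper itself states Chebyshev's inequality as a known background result without proof, so there is no paper argument to compare against; your approach is exactly the canonical one.
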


\begin{obs}\label{o:first} If the probability of an event is   $p$, then the expected trial of the event's first occurrence in repeated trials  is  $\frac 1 p$.
\end{obs}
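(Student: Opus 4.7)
The plan is to recognize that the trial number $X$ of the first occurrence is a geometric random variable and then compute its expectation in one of two standard ways.

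First I would set up the distribution: since the trials are independent with success probability $p$, the event "first occurrence is on trial $k$" requires $k-1$ failures followed by a success, so
\[
\pr(X=k) = (1-p)^{k-1}p \qquad \text{for } k=1,2,3,\ldots
\]
The cleanest route from here is a one-line recursive (memoryless) argument: condition on the outcome of the first trial. With probability $p$ the event occurs immediately, contributing $1$; with probability $1-p$ it does not, and by the independence of subsequent trials the expected number of additional trials needed is again $\e[X]$. Thus
\[
\e[X] = p\cdot 1 + (1-p)\bigl(1 + \e[X]\bigr),
\]
which simplifies to $p\,\e[X] = 1$, giving $\e[X] = 1/p$.

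Alternatively, one can evaluate the series directly: writing $q=1-p$,
\[
\e[X] = \sum_{k=1}^{\infty} k(1-p)^{k-1}p = p\sum_{k=1}^{\infty} k q^{k-1} = p\cdot\frac{1}{(1-q)^2} = \frac{1}{p},
\]
using the standard identity $\sum_{k\ge 1} k q^{k-1} = (1-q)^{-2}$ for $|q|<1$. The only mild subtlety is justifying the conditioning step (or, equivalently, the convergence of the series), but since $0<p\le 1$ the geometric tail decays exponentially and both derivations are routine; there is no real obstacle here.
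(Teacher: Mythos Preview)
Your proof is correct and entirely standard. The paper itself does not prove this observation at all: it is stated as a well-known fact about the geometric distribution and used without justification, so there is no ``paper's own proof'' to compare against. Your argument (either the memoryless recursion or the direct series computation) is exactly the kind of routine verification one would supply if asked to fill in the details.
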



\section{Expected propagation time}\label{s:ept}

In this section we determine the expected propagation time 
for several families of graphs.  We also develop several tools for bounding   the expected propagation time.

\begin{prop}\label{prop:ept-cycle} For a cycle of order $n > 2$, 
\[\ept(C_{n}) = \begin{cases} 
\frac n 2 +\frac 1 3 & \mbox{if $n$ is even}\\
\frac n 2 +\frac 1 2 &  \mbox{if $n$ is odd}
\end{cases}.\]
\end{prop}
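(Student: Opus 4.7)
The plan is to exploit the vertex-transitivity of $C_n$ to fix a starting vertex $v$, and then show that the only source of randomness is the very first round. Once at least two vertices are blue, the blue set forms a contiguous arc whose two endpoints each have exactly one white neighbor, so by \eqref{eq:pforce} both endpoints force with probability $1$; from then on the process is deterministic.

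First I would analyze the first round. The single blue vertex $v$ has two white neighbors and, by \eqref{eq:pforce}, fires at each independently with probability $1/2$. Letting $k$ denote the number of blue vertices, after round $1$ we have
\[
\pr(k=3)=\tfrac14,\quad \pr(k=2)=\tfrac12,\quad \pr(k=1)=\tfrac14,
\]
and these probabilities are the same in every round where $k=1$. Hence the number of rounds $R$ until we first have $k\ge 2$ is geometric with success probability $3/4$, so $\e[R]=4/3$, and conditional on $R$, the state reached is $3$ with probability $1/3$ and $2$ with probability $2/3$, independently of $R$.

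Next I would verify that after the first round in which $k\ge 2$, the process is deterministic: if the blue arc has $2\le k\le n-2$ vertices, each of its two endpoints has $|N[u]\cap B|=2$ and $\deg u=2$, so both fire deterministically and $k\mapsto k+2$; when $k=n-1$, the last white vertex is the unique white neighbor of both endpoints and is forced in one step. Counting steps of this deterministic tail gives a function $f(k_0,n)$ for the number of additional rounds needed after reaching state $k_0\in\{2,3\}$: a straightforward parity check yields $f(2,n)=f(3,n)=\frac{n-2}{2}$ when $n$ is even, while $f(2,n)=\frac{n-1}{2}$ and $f(3,n)=\frac{n-3}{2}$ when $n$ is odd.

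Finally I would assemble the pieces via $\ept(C_n,\{v\})=\e[R]+\e[f]$. For $n$ even the tail is independent of the arrival state, giving
\[
\ept(C_n,\{v\})=\tfrac43+\tfrac{n-2}{2}=\tfrac{n}{2}+\tfrac13.
\]
For $n$ odd,
\[
\e[f]=\tfrac23\cdot\tfrac{n-1}{2}+\tfrac13\cdot\tfrac{n-3}{2}=\tfrac{3n-5}{6},
\]
so $\ept(C_n,\{v\})=\tfrac43+\tfrac{3n-5}{6}=\tfrac{n}{2}+\tfrac12$. Since every single-vertex starting set gives the same value by symmetry, this is $\ept(C_n)$. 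There is no real obstacle here; the only thing requiring care is the parity bookkeeping in the deterministic tail and, in the odd case, tracking that $\e[f]$ depends on the arrival state so one cannot just add expectations of independent pieces without conditioning properly.
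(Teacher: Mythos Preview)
Your proposal is correct and follows essentially the same approach as the paper: both arguments compute the expected waiting time $4/3$ for the first force via a geometric random variable with success probability $3/4$, observe that the process is deterministic thereafter, split on whether two or three vertices are blue after the first successful round (with conditional probabilities $2/3$ and $1/3$), and combine by parity. Your write-up is slightly more explicit in justifying the deterministic tail and in noting the independence of $R$ from the arrival state, but the structure and computations are identical to the paper's.
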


\begin{proof}
Observe that $\ept(C_{n})$ is the sum of the expected number of rounds until the first successful probabilistic force plus the number of rounds for the remainder of the vertices to be deterministically forced blue, since the process becomes deterministic as soon as there are at least two adjacent blue vertices.    Since the probability that one blue vertex forces at least one of its white neighbors in any round is $\frac 3 4$, the expectation for the first force is  $\frac 4 3$ by Observation \ref{o:first}.  The number of rounds $r$ needed to deterministically force all remaining vertices once two or three consecutive vertices are blue is the maximum of the distance $\dist(w,B)$  
of a white vertex $w$ to the set $B$ of (two or three) blue vertices. For $n$ even, $r=\frac{n-2}2$ (regardless of whether there are two or three blue vertices), so  $\ept(C_{n}) =\frac 4 3+\frac{n-2}2= \frac n 2 +\frac 1 3$. 

Now assume $n$ is odd. The case of three blue vertices must be distinguished from two blue vertices because it affects $r=\max_{w\in V(G)\setminus B}\dist(w,B)$.  When there are three consecutive blue vertices, $r=\frac{n-3}2$, whereas with two adjacent blue vertices $r=\lc\frac{n-2}2\rc=\frac{n-1}2$.  Assuming that  the first  force has taken place in the prior round, the probability of exactly three blue vertices (two forces occurred) is $\frac 1 3$ and the probability of exactly two blue vertices (one force occurred) is $\frac 2 3$.  Thus  $\ept(C_{n})=\frac 4 3+ \frac 1 3 (\frac{n-3}{2}) + \frac 2 3 (\frac{n-1}2)  =\frac n 2 +\frac 1 2$.
\end{proof}

\begin{prop}\label{prop:ept-path} For a path of order $n > 2$,
\[\ept(P_{n}) = \begin{cases} 
 \frac n 2+\frac 2 3 & \mbox{if $n$ is even}\\
\frac n 2 +\frac 1 2 &  \mbox{if $n$ is odd}
\end{cases}.\]
\end{prop}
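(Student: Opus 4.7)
My plan mirrors the proof of Proposition \ref{prop:ept-cycle}: compute the expected wait for the first successful probabilistic force, then add the deterministic propagation time that follows. The added complication for the path is that, unlike the cycle, rotational symmetry is lost and the starting vertex matters, so I must identify an optimal $v_k$.

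First I would dispose of leaf starts. If $v$ is a leaf, every force in the resulting process is deterministic, so $\ept(P_n,\{v\}) = n-1$, which exceeds the claimed values for all $n > 2$. Hence an optimal start is an interior vertex $v_k$; write $a = k-1$ and $b = n-k$, so $a + b = n-1$ with $a, b \ge 1$. In round $1$ the vertex $v_k$ fires independently at each of its two white neighbors with probability $\tfrac{1}{2}$, so the probability of at least one force is $\tfrac{3}{4}$. By Observation \ref{o:first}, the round $T$ of first success satisfies $\e[T] = \tfrac{4}{3}$, and conditional on success each of the three outcomes (both neighbors blue, only the left blue, only the right blue) has probability $\tfrac{1}{3}$.

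Once two adjacent blue vertices appear on a path, every subsequent force is deterministic, because every boundary blue vertex then has at most one white neighbor and its closed blue neighborhood fills its degree. Consequently the remaining propagation time after round $T$ equals the maximum distance from a white vertex to the current blue set, and a case analysis of the three outcomes yields
\[
\ept(P_n, \{v_k\}) = \frac{4}{3} + \frac{1}{3}\bigl[\max(a-1, b-1) + \max(a-1, b) + \max(a, b-1)\bigr].
\]

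To finish, I would minimize this expression over $a + b = n - 1$ with $a, b \ge 1$. Taking $a \le b$ by symmetry, the bracketed sum equals $3b - 2$ when $a \le b - 1$ and $3b - 1$ when $a = b$, so $\ept(P_n, \{v_k\})$ equals $b + \tfrac{2}{3}$ or $b + 1$, respectively. For odd $n$ the balanced choice $a = b = (n-1)/2$ is feasible and optimal, giving $n/2 + 1/2$; for even $n$ balance is impossible and the minimum is attained at $(a,b) = ((n-2)/2,\, n/2)$, giving $n/2 + 2/3$. The one mild subtlety, handled by direct inspection, is confirming that the $\max$ formulas remain correct at the boundary cases $k = 2$ or $k = n-1$, where $\max(0,\cdot)$ correctly records that no white vertices remain on the short side.
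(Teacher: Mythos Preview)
Your proof is correct and follows essentially the same approach as the paper: decompose $\ept$ into the expected wait $\tfrac{4}{3}$ for the first force plus the deterministic propagation time thereafter, conditioning on which neighbor(s) are forced. The only difference is that the paper simply declares the starting vertex to be $u=\lceil n/2\rceil$ and computes, whereas you explicitly rule out leaf starts and minimize over all interior vertices; your version is thus slightly more complete on the optimality point.
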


\begin{proof} 
The proof is similar to that of Lemma \ref{prop:ept-cycle}.  
{Number the vertices of $P_n$ in order from 1 to $n$ and let $u=\lc\frac n 2\rc$ be the blue vertex.}  For odd $n$, the situation is the same as that of a cycle and $\ept(P_n)=\frac 4 3 + \frac 1 3 (\frac{n-3}{2}) + \frac 2 3 (\frac{n-1}2) = \frac n 2 +\frac 1 2$.    

Now assume $n$ is even, which means {the distance  from  $u=\frac n 2$ to  $n$  is  $\frac n 2$ whereas  the distance from $u$  to $1$  is  $\frac {n} 2-1$.   
Assuming that  at least one force takes place, the probability of $\frac n 2 +1$ being forced (with or without $\frac n 2 -1$ being forced) is $\frac 2 3$ and the probability of only $\frac n 2 -1$ being forced  is $\frac 1 3$}.  Thus  $\ept(P_{n}) = \frac 4 3 + \frac 2 3 (\frac{n}{2}-1) + \frac 1 3 (\frac{n}2) = \frac n 2 +\frac 2 3$.
\end{proof}

 A probabilistic zero forcing process is a Markov chain; see  \cite{KY13}   for a discussion of the construction of the Markov transition matrix, or see the proof of Lemma \ref{lem:starlow} for an actual such matrix.  In the next remark, we explain how to  use the Markov transition matrix $M$ to compute the expected propagation time $\ept(G,Z)$.  
 
 \begin{rem}\label{rem:Markov} {\rm Given a graph $G$ and an initial set of blue vertices $Z$, each possible set of blue vertices in the probabilistic zero forcing process is a state.  Suppose there are $m$ states, in state 1 the blue vertices are exactly those in $Z$, and in state $m$ all vertices are blue.  Let $M$ denote the Markov transition matrix and $\bq=[1,0,\dots,0]$. Then the probability that all vertices are blue after round  $r$ is $\left(\bq M^r\right)_m=\lp M^r\rp_{1m}$, so
\[\ept(G,Z)=\sum_{r=1}^\infty r\left(\lp M^r\rp_{1m}-\lp M^{r-1}\rp_{1m}\right).\]
}\end{rem}
The method described in Remark \ref{rem:Markov} is particularly  useful for specific small cases, as in the next lemma.

\begin{lem}\label{lem:starlow} Let $G$ be a 
graph and let  $v$ be a  vertex of $G$.  Then
\[\ept(G[N[v]])\le \begin{cases} 
1 & \mbox{if }\deg v =1\\
2 &  \mbox{if }\deg v=2\\
2.76316  &  \mbox{if }\deg v = 3\\
 3.34171 &  \mbox{if }\deg v = 4
\end{cases}.\]
\end{lem}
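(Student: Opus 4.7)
The plan is to reduce to the case of the star $K_{1,d}$ started from its center, and then solve a small Markov chain.

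First, by definition $\ept(G[N[v]]) \le \ept(G[N[v]],\{v\})$, so it suffices to bound the right-hand side. Let $d=\deg v$; I claim that $\ept(G[N[v]],\{v\}) \le \ept(K_{1,d},\{v\})$, where on the right $v$ denotes the center of the star. To see this, consider the modified ``$v$-only'' process on $G[N[v]]$ in which, at each round, every attempted firing by a blue vertex other than $v$ is suppressed. Couple this with the true process by giving each neighbor $w$ of $v$ a uniform $U_w \in [0,1]$ at each round, declaring $v$ to force $w$ iff $U_w < (1+k)/d$, where $k$ is the number of currently blue neighbors of $v$ in the relevant process. Since in the $v$-only process the set of blue neighbors of $v$ is always dominated by that of the true process (by a simple induction on the round), its threshold $(1+k)/d$ is at most that of the true process, so the $v$-only blue set stays inside the true blue set; hence the $v$-only process takes at least as long. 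Finally, because in $K_{1,d}$ every leaf has degree one with its only neighbor $v$ already blue, leaves of $K_{1,d}$ can never fire, so probabilistic zero forcing on $K_{1,d}$ from the center \emph{is} its own $v$-only process, and the two $v$-only processes match distributionally because $\deg v = d$ in both graphs.

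For $K_{1,d}$ starting from the center, the state reduces to the number $k \in \{0,1,\dots,d\}$ of currently blue leaves; state $d$ is absorbing, and from state $k$ each of the $d-k$ white leaves is independently forced with probability $p_k := (k+1)/d$. Letting $E_k$ denote the expected remaining rounds from state $k$, one gets $E_d = 0$ and the first-step recurrence
\[
\bigl(1 - (1-p_k)^{d-k}\bigr)\, E_k \;=\; 1 + \sum_{j=1}^{d-k} \binom{d-k}{j} p_k^{\,j} (1-p_k)^{d-k-j}\, E_{k+j},
\]
which determines $E_{d-1}, E_{d-2}, \dots, E_0$ recursively. The case $d = 1$ is a single deterministic force. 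For $d = 2, 3, 4$ the recurrence yields $E_0 = 2,\ 105/38 \approx 2.76316$, and $2924/875 \approx 3.34171$, matching the stated bounds.

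The main delicate point is the coupling step: the upper bound on $\ept(G[N[v]],\{v\})$ by the star value rests on carefully verifying that suppressing the non-$v$ firings really can only slow things down, and that doing so produces a process distributionally identical to star propagation from the center. After that, the rational arithmetic for each $d \in \{1,2,3,4\}$ is purely mechanical and could equivalently be read off by assembling the $(d+1) \times (d+1)$ Markov transition matrix and applying Remark~\ref{rem:Markov} directly.
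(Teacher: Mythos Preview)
Your proof is correct and follows essentially the same approach as the paper: reduce to the star $K_{1,d}$ started from its center, then compute via the Markov chain on the number of blue leaves. The paper hand-waves the reduction (``additional forcing \dots\ may be possible'') and computes via the transition matrix and Remark~\ref{rem:Markov}, whereas you make the reduction rigorous with an explicit threshold coupling and then solve by first-step analysis; the exact rational values you obtain, $E_0 = 105/38$ and $E_0 = 2924/875$, agree with the paper's decimals.
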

\bpf  Let $d=\deg v$.  Let $G'$ be the graph obtained from $G[N[v]]$ by removing all edges except for those adjacent to $v$, so $G'$ is a star. Since the degree of $v$ is the same in both $G$ and $G'$ and additional forcing of vertices in $G[N[v]]$ may be possible in $G$, $\ept(G[N[v]])\le  \ept(G',\{v\})$.  Thus it suffices to determine $\ept(G',\{v\})$ for $d=1,2,3,4$.

The case $\deg v=1$ is deterministic zero forcing.  
For $\deg v=2$, $\ept(G',\{v\})=\ept(P_3)=2$ by Proposition \ref{prop:ept-cycle}.
The remaining probabilities are computed using the Markov transition matrices $M_d$ for $d=3,4$ with states $0, 1,\dots,d$ blue leaves where
\[M_3=\mtx{\frac{8}{27} & \frac{4}{9} & \frac{2}{9} & \frac{1}{27}
   \\[2pt]
 0 & \frac{1}{9} & \frac{4}{9} & \frac{4}{9} \\[2pt]
 0 & 0 & 0 & 1 \\
 0 & 0 & 0 & 1}\qquad\mbox{and}\qquad M_4=\mtx{ \frac{81}{256} & \frac{27}{64} & \frac{27}{128} &
   \frac{3}{64} & \frac{1}{256} \\[2pt]
 0 & \frac{1}{8} & \frac{3}{8} & \frac{3}{8} &
   \frac{1}{8} \\[2pt]
 0 & 0 & \frac{1}{16} & \frac{3}{8} & \frac{9}{16} \\[2pt]
 0 & 0 & 0 & 0 & 1 \\
 0 & 0 & 0 & 0 & 1}\!\!;\] see \cite{markov-star} for the computational details. \epf

Next we prove  a main lemma that provides an upper bound for expected propagation time for the neighborhood of a vertex based on its degree.
\begin{lem}\label{lem:starlog} Let $G$ be a 
graph.  Then for any vertex $v$ of $G$,
\[\ept(G[N[v]]) = O(\log{\deg v}).\]
\end{lem}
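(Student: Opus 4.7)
The plan is to reduce to the star $K_{1,d}$ with $d=\deg v$ via the edge-removal trick from Lemma~\ref{lem:starlow}, and then bound $\ept(K_{1,d},\{v\})$ by splitting the probabilistic process into a growth phase and a mop-up phase. Let $X_t$ denote the number of blue leaves after round $t$, starting from $X_0=0$ with the center $v$ blue. Conditional on $X_t=k$ the increment $X_{t+1}-X_t$ is $\mathrm{Binomial}\!\left(d-k,\tfrac{k+1}{d}\right)$, since each white leaf is forced independently with probability $(k+1)/d$.

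For the \emph{growth phase} (from $X_t=0$ to the first time $X_t\ge d/2$), I would argue that the expected number of rounds is $O(\log d)$. The first round produces at least one blue leaf with probability $1-(1-1/d)^d\ge 1-1/e$, so $X_t\ge 1$ occurs within $O(1)$ expected rounds. For $1\le X_t=k\le d/2$ the mean increment is $(d-k)(k+1)/d\ge (k+1)/2$, suggesting roughly geometric growth of $X_t$. I would turn this into a high-probability statement by a multiplicative Chernoff bound, supplemented by a crude per-round bound on $\pr[X_{t+1}>X_t]$ in the unconcentrated small-$k$ regime, to produce constants $\epsilon,c>0$ with $\pr[X_{t+1}\ge (1+\epsilon)X_t\mid X_t=k]\ge c$ for all $1\le k\le d/2$. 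A geometric-waiting-time estimate then bounds the expected time between consecutive $(1+\epsilon)$-factor jumps by $1/c$, so the expected time to reach $d/2$ is at most $(\log_{1+\epsilon}(d/2))/c+O(1)=O(\log d)$.

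For the \emph{mop-up phase} (from $X_t\ge d/2$ to $X_t=d$) let $Y_t=d-X_t$. Then $Y_{t+1}\mid Y_t=j$ is $\mathrm{Binomial}(j,(j-1)/d)$, so $\e[Y_{t+1}\mid Y_t]\le Y_t^2/d\le Y_t/2$ whenever $Y_t\le d/2$. Iterating gives $\e[Y_{t_0+s}\mid Y_{t_0}]\le Y_{t_0}/2^s$; taking $s=\lceil\log_2 Y_{t_0}\rceil+2=O(\log d)$ and applying Markov's inequality yields $\pr[Y_{t_0+s}\ge 1]\le 1/4$. Since $Y_t$ is non-increasing, restarting after a failure shows this phase also costs $O(\log d)$ expected rounds, and summing the two phases proves the lemma.

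The main obstacle I anticipate is the growth phase when $X_t$ is of constant size: for $X_t=O(1)$ the binomial increment is not concentrated around its mean, so a purely Chernoff-based multiplicative argument gives no useful bound by itself. This forces a separate treatment of the boundary regime, for instance a short warm-up showing that $X_t$ reaches some fixed absolute constant in $O(1)$ expected rounds via the per-round lower bound of $1-1/e$ on the probability of at least one successful force, after which the Chernoff-based expansion can take over. Equivalently, one can use the Jogdeo--Samuels bound on the median of a binomial throughout, which also handles small $k$. Either route yields the desired $O(\log d)$ bound.
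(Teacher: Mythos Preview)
Your proposal is correct and follows essentially the same three-phase decomposition as the paper: reduce to the star $K_{1,d}$, handle a short warm-up to constant size, establish multiplicative growth of the blue count up to $d/2$, then multiplicative decay of the white count from $d/2$ down to zero. The paper carries out all three phases via Chebyshev's inequality (with explicit constants: at least one new blue vertex with probability $\ge \tfrac12$ for $b\le 4$; at least $b/4$ new blue with probability $\ge \tfrac13$ for $4\le b\le d/2$; at least $w/4$ new blue with probability $\ge \tfrac{16}{17}$ for $w\le d/2$), which matches your growth-phase plan with Chernoff swapped for Chebyshev. Your mop-up phase is the one genuine deviation: rather than a per-round concentration bound, you exploit the contraction $\e[Y_{t+1}\mid Y_t]\le Y_t/2$, iterate it via the tower property, and apply Markov once at the end. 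This is slightly slicker---no variance computation and no case analysis on the size of $w$---and gives the same $O(\log d)$ bound; the paper's per-round formulation has the minor advantage that it directly yields a uniform per-block success probability, which is what feeds into the block argument of Lemma~\ref{lem:starlogdist}.
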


\begin{proof} Let $d=\deg v$.  
As in the proof of Lemma \ref{lem:starlow}, let $G'$ be the graph obtained from $G[N[v]]$ by removing all edges except for those adjacent to $v$; note that the order of $G'$ is $d+1$.  
 It suffices to prove that $\ept(G',\{v\}) = O(\log d)$. Since asymptotic bounds are for sufficiently large values, we assume $d\ge 272$.  We establish the following three claims, using $b$ to denote the number of currently blue vertices in $G'$ and $w=d+1-b$ to denote the number of currently white vertices.  
 \ben[(C1)]
 \item\label{1b4} For $1\le b \le 4$, the probability of at least one new blue vertex in $G'$ in one  round is at least $\frac 1 2$.
 \item\label{4bd2} For $4 \le b \le \frac d 2$,  the probability  of at least $\frac b 4$ new blue vertices in $G'$ in one  round is at least $\frac 1 3$.
\item\label{d2bd} For $1 \le w \le \frac d 2$,  the probability of at least $\frac w 4$ new blue vertices in $G'$ in one  round is at least $\frac {16}{17}$.\een
Once the three claims have been established, by Observation \ref{o:first} the expected number of rounds to satisfy the condition for one new blue vertex, at least $\frac b 4$ new blue vertices, or at least $\frac w 4$ new blue vertices,  is at most $2$, $3$, or $\frac{17}{16}$, respectively.
Thus the expected number of rounds to go from one to at least four blue vertices is $O(1)$.  Starting with between 4 and $\frac d 2$ blue vertices, the expected number of rounds until the number of blue vertices goes up by 25\% is at most $3$.  Thus the expected number of rounds until the number of blue vertices is at least $\lp\frac 5 4\rp^r 4 $ is at most $3r$, and  the expected number of rounds to go from at least four blue vertices to at least $\frac d 2 + 1$ blue vertices is $O(\log d)$.    
Starting with at least $\frac d 2 + 1$ blue vertices, or at most $\frac d 2$ white vertices, the expected number of rounds until the number of white vertices decreases  by 25\% is  $\frac{17}{16}$.  Thus the expected number of rounds until the number of white vertices is at most $\lp\frac 3 4\rp^r \lp\frac d 2 \rp $ is at most $\frac{17}{16}r$, and   the expected number of rounds to go from   at least $\frac d 2 + 1$ blue vertices  to all blue vertices is $O(\log d)$.  
  Thus $\ept(G',\{v\}) = O(\log{d})$.
 
For (C\ref{1b4}), note that when there are $b$ blue vertices, the probability that at least one additional vertex gets colored blue in the current round is 
\[1-(1-\frac b d)^{d+1-b} \geq 1-(1-\frac b d)^{d-b} \geq 1-\frac{1}{e^{b (d-b)/d}} \geq \frac 1 2\] for $b \leq 4$. 

For (C\ref{4bd2}), let $p(b)$ be the probability that the number of vertices forced  in the current round is at least $\frac b 4$, given that there are currently $b$ blue vertices and $4\le b\le \frac d 2$.   For each white vertex $v_{1}, \dots, v_{d+1-b}$, define $X_{i}$ to be $1$ if $v_{i}$ is colored blue in this round and $0$ otherwise. Let $X = \sum_{i = 1}^{d+1-b} X_{i}$. Since the $X_i$'s are independent identically distributed (i.i.d.)  with $\e[X_{i}] = \frac b d$ and $\var[X_{i}] = \frac {b(d-b)}{ d^2}$, we have $\e[X] = \frac {b(d+1-b)}d>\frac b 2$ and $\var[X] = \frac{b(d-b)(d+1-b)}{d^2}\le b$. 
We consider two subcases: $36 \leq b \leq \frac d 2$ and $4 \leq b \le 35$. 
For $36 \leq b \leq \frac d 2$,
 \bea 1-p(b)=\pr\lp X  < \frac b 4\rp \le \pr\lp X \le \frac b 4\rp  &=& \pr\lp \frac b 2-X \geq \frac b 4\rp  \leq \pr\lp \e[X] - X \geq \frac b 4\rp\\
 & \leq& \pr\lp |X-\e[X]| \geq \frac b 4\rp \leq \frac{\var[X]}{(\frac b 4)^{2}} \leq \frac{16}{b} < \frac 1 2\eea 
where $\pr\lp |X-\e[X]| \geq \frac b 4\rp \leq \frac{\var[X]}{(\frac b 4)^{2}}$ is Chebyshev's inequality and the other equalities and inequalities follow from the definitions, the values above, or algebraic manipulation.  Thus in this case $p(b)> \frac 1 2> \frac 1 3$.
 For $4 \leq b \le 35$ note first that $d\ge 272$ implies $\frac {d+1-b}d\ge \frac 7 8$. 
 \bea\pr\lp X \leq \frac b 4\rp = \pr\lp \frac{7b}8-X \geq \frac{5b}8\rp &\leq& \pr\lp \e[X] - X  \geq \frac{5b}8\rp  \\
 &\leq& \pr\lp |X-\e[X]| \geq \frac{5b}8\rp  \leq \frac{\var[X]}{(\frac{5b}8)^{2}} \leq \frac{64}{25b} < \frac 2 3.\eea
Thus in this case $p(b)> \frac 1 3$.

For (C\ref{d2bd}), let $q(w)$ be the probability that the number of new blue vertices in the current round is at least $\frac w 4$, given that there are currently $w \le \frac d 2$ white vertices in $G'$. 
For each white vertex $v_{1}, \dots, v_{w}$, define $Y_{i}$ to be $1$ if $v_{i}$ is colored blue and $0$ otherwise. Let $Y = \sum_{i = 1}^{w} Y_{i}$. Since the $Y_i$'s are i.i.d. with $\e[Y_{i}] = \frac{d+1-w}d$ and $\var[Y_{i}] = \frac{(d+1-w)(w-1)}{d}$, we have $\e[Y] = \frac{(d+1-w)w}{d}\ge \frac w 2$ and $\var[Y] = \frac{(d+1-w) (w-1)w}{d^2}$. Since $d\ge 272$,
\bea 1-q(w)\le \pr\lp Y \leq \frac w 4\rp  &=& \pr\lp \frac w 2-Y \geq \frac w 4\rp \  \leq\ \pr\lp \e[Y] - Y\geq \frac w 4\rp  \\
&\leq& \pr\lp |Y-\e[Y]| \geq \frac w 4\rp  \leq \frac{\var[Y]}{(\frac w 4)^{2}} \leq \frac{16}{d} \le \frac 1 {17}. \qedhere
\eea 
%
\end{proof}

\begin{cor}\label{c:univers}
If a graph $G$ of order $n$ has a universal vertex, then $\ept(G) = O(\log{n})$. 
\end{cor}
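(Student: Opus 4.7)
The plan is to observe that this corollary is essentially an immediate consequence of Lemma \ref{lem:starlog}, applied with $v$ taken to be a universal vertex of $G$. The only real content is matching up the definitions.

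More precisely, first I would note that if $v \in V(G)$ is universal, then $N[v] = V(G)$, so the induced subgraph $G[N[v]]$ equals $G$ itself, and also $\deg v = n - 1$. In particular, starting the probabilistic zero forcing process from the single blue vertex $v$ in $G$ is the same process as starting it from $v$ in $G[N[v]]$, so
\[
\ept(G, \{v\}) \;=\; \ept(G[N[v]], \{v\}).
\]

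Next, by definition $\ept(G)$ is the minimum of $\ept(G, \{u\})$ over $u \in V(G)$, so in particular
\[
\ept(G) \;\le\; \ept(G, \{v\}) \;=\; \ept(G[N[v]], \{v\}).
\]
The proof of Lemma \ref{lem:starlog} shows (by passing to the star $G'$ on $N[v]$ in which all edges not incident with $v$ are deleted, and noting that any additional edges can only help propagation) that $\ept(G[N[v]], \{v\}) \le \ept(G', \{v\}) = O(\log \deg v)$. Substituting $\deg v = n-1$ gives $\ept(G[N[v]], \{v\}) = O(\log(n-1)) = O(\log n)$.

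Chaining the inequalities yields $\ept(G) = O(\log n)$, as required. There is no real obstacle here; the only subtlety is being careful that Lemma \ref{lem:starlog}'s bound on $\ept(G[N[v]])$ really gives the bound on $\ept(G[N[v]], \{v\})$ that we need (which it does, since the lemma's proof explicitly bounds the start-from-$v$ value), and that we may freely pass from $G$ to $G[N[v]] = G$ when $v$ is universal.
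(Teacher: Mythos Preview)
Your proposal is correct and is essentially the same argument the paper intends: since a universal vertex $v$ has $N[v]=V(G)$ and $\deg v = n-1$, Lemma~\ref{lem:starlog} applied to $v$ immediately yields $\ept(G)=\ept(G[N[v]])=O(\log(n-1))=O(\log n)$. The only difference is that you route the argument through $\ept(G,\{v\})$ and the proof of the lemma, whereas one can apply the lemma's statement directly because $G[N[v]]=G$; either way is fine.
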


\begin{thm}\label{starlower}
For the star on $n+1$ vertices, $\ept(K_{1, n}) = \Theta(\log{n})$.
\end{thm}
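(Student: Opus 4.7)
My plan is to prove the two bounds separately; the upper bound is essentially free, so the substantive work is the matching lower bound.

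The upper bound $\ept(K_{1,n}) = O(\log n)$ is immediate from Corollary~\ref{c:univers}, since the center of $K_{1,n}$ is a universal vertex (equivalently, it follows from Lemma~\ref{lem:starlog} applied to the center, as $K_{1,n} = K_{1,n}[N[c]]$).

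For the lower bound, the plan is to track the total number of blue vertices $B_r$ after round $r$ and show it grows at most geometrically in expectation. Once the center $c$ is blue (which happens deterministically in at most one round, regardless of whether we start from $c$ or from a leaf), the only possible forces involve $c$, and each white leaf $w$ is forced in a given round independently with probability $\pr(c\to w) = B_r/n$. Since there are $n+1-B_r$ white leaves, the expected number of new blue vertices in the next round is $(n+1-B_r)\cdot B_r/n \le B_r$, so $\e[B_{r+1}\mid B_r]\le 2B_r$. Iterating this inequality gives $\e[B_r]\le 2^r B_0 \le 2^{r+1}$, since in both starting scenarios we have $B_0\le 2$ after accounting for the first (deterministic) leaf-to-center step.

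Letting $T=\min\{r : B_r = n+1\}$ denote the probabilistic propagation time, Markov's inequality yields
\[
\pr(T\le r) \;\le\; \pr(B_r\ge n+1) \;\le\; \frac{\e[B_r]}{n+1} \;\le\; \frac{2^{r+1}}{n+1}.
\]
Taking $r_0 = \lfloor \log_2((n+1)/4)\rfloor$ makes the right-hand side at most $1/2$, so $\pr(T>r)\ge 1/2$ for every $0\le r\le r_0$, and
\[
\e[T] \;=\; \sum_{r\ge 0}\pr(T>r) \;\ge\; \frac{r_0+1}{2} \;=\; \Omega(\log n).
\]
Since this holds whether the single starting vertex is the center or a leaf, minimizing over starting vertices gives $\ept(K_{1,n})=\Omega(\log n)$ and combining with the upper bound gives $\Theta(\log n)$. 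The only delicate point is justifying $\e[B_{r+1}\mid B_r]\le 2B_r$ during the very first round when starting from a leaf (the center forces deterministically), but this case only adds exactly one blue vertex, which is consistent with the bound; after that the uniform estimate $(n+1-B_r)\cdot B_r/n\le B_r$ controls every subsequent round.
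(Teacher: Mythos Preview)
Your proof is correct and takes a genuinely different route from the paper's lower-bound argument. The paper applies Chebyshev's inequality to show that, when there are $b$ blue vertices with $\sqrt{n}\le b\le n/2$, the number of new blue vertices in one round is at most $4b$ with probability $1-O(n^{-1/2})$; it then multiplies these per-round success probabilities to conclude that, with probability $1-o(1)$, at least $\log_5(\sqrt{n}/2)$ rounds are needed to pass from at most $\sqrt{n}$ to at least $n/2$ blue vertices. Your argument is instead a first-moment method: you bound $\e[B_r]\le 2^{r+1}$ directly from the one-step inequality $\e[B_{r+1}\mid B_r]\le 2B_r$, and then apply Markov's inequality once to $B_r$ itself. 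This is more elementary (no variance computation, no Chebyshev) and arguably cleaner. The paper's approach has the advantage of yielding an explicit high-probability statement about the propagation time that is reused in later sections (for the $\ell$-round probability and confidence propagation corollaries), though your Markov bound $\pr(T\le r)\le 2^{r+1}/(n+1)$ delivers the same high-probability conclusion just as readily.
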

\begin{proof}
The upper bound follows from   Lemma \ref{lem:starlog}. For the lower bound, 
let $h(b)$ be the probability that the number of new blue vertices colored in the current round is at most $4b$, given that there are currently $b$ blue vertices in $K_{1, n}$ and  the center vertex is blue.
Using the same setup with the random variables $X_{i}$ for each $i = 1, \dots, n+1-b$ and  $X=\sum_{i=1}^{n+1-b} X_i$  as in Lemma \ref{lem:starlog}, $\e[X] = \frac {b(n+1-b)}n\le b$ and $\var[X] = \frac{b(n-b)(n+1-b)}{n^2}\le b$.
We again use Chebyshev's inequality to show that $h(b) = 1-O({\sqrt{n}}^{-1})$ for $\sqrt{n} \leq b \leq \frac n 2$:  \vspace{-3pt}
\[ 1-h(b) \le \pr\lp X - b \geq 3b\rp   \leq  \pr\lp |X-\e[X]| \geq 3b\rp  \leq \frac{\var[X]}{(3b)^{2}} \leq \frac{1}{9b} \le \frac 1 {9\sqrt n}= O\left(\frac 1 {\sqrt n}\right)\!.\vspace{-3pt}\]


 Since starting with $\sqrt{n} \leq b \leq \frac n 2$ blue vertices and coloring at most $4b$ additional vertices blue means there are at most $5b$ blue vertices after the round, the probability that there are at most $5^rb$ blue vertices after $r$ rounds is at least $\left(1-O(\frac 1 {\sqrt{n}})\right)^r$\!.   Thus going from $b\le \sqrt n$ blue vertices to at least $\frac n 2$ blue vertices requires that $5^r\sqrt n\ge  \frac n 2$, or $r\ge  \log_{5}\!\left(\frac {\sqrt n} 2\right)$.   
Thus  with probability at least $(1-O({\sqrt{n}}^{-1}))^{\log_{5}(\sqrt n/2)} = 1-o(1)$, it takes at least $\log_{5}\left(\frac {\sqrt n}2\right)$ rounds for the number of blue vertices to increase from at most $\sqrt{n}$ to at least $\frac n 2$. Since $\sqrt{n} \geq 2$ for $n > 3$, we have covered the case in which the first blue vertex is a leaf rather than the center, because in that case the expected propagation time is one more than the expected propagation time starting with two blue vertices, one of which is the center.   Thus 
 $\ept(K_{1,n})=\Omega(\log{n})$.
\end{proof}

\begin{prop}\label{klower}
For the complete graph on $n$ vertices, $\ept(K_{n}) = \Omega(\log{\log{n}})$.
\end{prop}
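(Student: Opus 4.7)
The plan is to make rigorous the heuristic that in $K_n$ the number of blue vertices approximately squares each round, so that $\Omega(\log\log n)$ rounds are required to fill the graph. Let $b_t$ denote the number of blue vertices after round $t$, starting from $b_0 = 1$ (by vertex-transitivity of $K_n$ the choice of starting vertex is irrelevant).

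First I bound the one-round growth. By symmetry, when $b_t = b$ every white vertex has exactly $b$ blue neighbors, and each blue vertex $u$ fires at a given white vertex independently with probability $|N[u]\cap B|/\deg u = b/(n-1)$. Hence a fixed white vertex is forced in round $t+1$ with probability $p(b) = 1-(1-b/(n-1))^b \le b^2/(n-1)$ by Bernoulli's inequality. Summing over the $n-b$ white vertices, $\e[b_{t+1}-b_t\mid b_t = b]\le (n-b)b^2/(n-1)\le b^2$, so $\e[b_{t+1}\mid b_t]\le b_t + b_t^2 \le 2b_t^2$.

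Next, set $Y_t = \log_2 b_t\ge 0$. Since $\log_2$ is concave, Jensen's inequality yields $\e[Y_{t+1}\mid b_t]\le \log_2 \e[b_{t+1}\mid b_t]\le 2Y_t + 1$. Taking expectations and iterating from $Y_0 = 0$ gives $\e[Y_t]\le 2^t - 1$. Choosing $T_0 = \lfloor \log_2 \log_2 n\rfloor - 3$ makes $\e[Y_{T_0}]\le (\log_2 n)/8$, so Markov's inequality applied to the nonnegative variable $Y_{T_0}$ gives
\[ \pr\bigl(b_{T_0}\ge \sqrt n\bigr) = \pr\bigl(Y_{T_0}\ge (\log_2 n)/2\bigr) \le \frac{\e[Y_{T_0}]}{(\log_2 n)/2} \le \frac{1}{4}. \]
On the complementary event (of probability at least $3/4$) we have $b_{T_0} < \sqrt n < n$, so not all vertices are blue and $\ptpf(K_n,\{v\}) > T_0$; hence $\ept(K_n)\ge (3/4)\,T_0 = \Omega(\log\log n)$.

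The only real care required is the direction of Jensen's inequality: $\log$ is concave, so $\e[\log X]\le \log\e[X]$, which is exactly what is needed to convert the multiplicative bound $\e[b_{t+1}\mid b_t]\le 2b_t^2$ into the additive recurrence $\e[Y_{t+1}]\le 2\e[Y_t]+1$ whose iteration produces doubly exponential growth. I do not anticipate a substantive obstacle, since Bernoulli's inequality captures the squaring behavior tightly and Markov on the log-scaled variable cleanly rules out reaching $n$ blue vertices in fewer than $\Omega(\log\log n)$ rounds.
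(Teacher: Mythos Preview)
Your proof is correct and takes a genuinely different route from the paper's. The paper bounds the per-round growth \emph{in probability}: it uses Chebyshev's inequality to show that with probability at least $1-O(1/b^2)$ at most $4b^2$ new vertices are colored when $b$ are blue, then multiplies these probabilities over $\log_3(\log n/\log\log n)$ rounds to conclude that with probability $1-o(1)$ the process needs $\Omega(\log\log n)$ rounds. You instead bound the per-round growth \emph{in expectation}, pass to $Y_t=\log_2 b_t$ via Jensen to turn the squaring into a linear recurrence $\e[Y_{t+1}]\le 2\e[Y_t]+1$, and apply Markov once at the end. Your argument is shorter and avoids any variance computation; the paper's concentration argument, on the other hand, yields the stronger conclusion that $\ptpf(K_n)\ge c\log\log n$ with probability $1-o(1)$ (not just $\ge 3/4$), which is what the paper later reuses to bound the $\ell$-round probability $\Pk\ell(K_n)$ and the confidence propagation time $\ptpf(K_n,\alpha)$.
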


\begin{proof}
Let $\hat h(b)$ be the probability that the number of additional blue vertices colored in the current round is at most $4 b^2$, given that there are currently $b$ blue vertices.
For each white vertex $v_{1}, \dots, v_{n-b}$, define $X_{i}$ to be $1$ if $v_{i}$ gets colored blue and $0$ otherwise and $X = \sum_{i = 1}^{n-b} X_{i}$. Since the $X_i$'s are i.i.d. with $\e[X_{i}] = 1-\lp1-\frac b {n-1}\rp^b$ and $\var[X_{i}] = \lp1-\lp1-\frac b {n-1}\rp^b\rp\lp1-\frac b {n-1}\rp^b$, 
we have $\e[X] = \lp1-\lp1-\frac b {n-1}\rp^b\rp(n-b) $, and furthermore 
$\var[X] =  \lp1-\lp1-\frac b {n-1}\rp^b\rp\lp1-\frac b {n-1}\rp^b (n-b) \le \e[X] \leq b^2$ by Bernoulli's inequality.  By algebraic manipulation and Chebyshev's inequality,
 \[ 1-\hat h(b)  \le \pr\lp X - b^2 \geq 3b^2\rp   \leq \pr\lp |X-\e[X]| \geq 3b^2\rp 
  \leq \frac{\var[X]}{(3b^2)^{2}} \leq \frac{1}{9b^2} = O\lp\frac 1{b^2}\rp\]
 Since starting with $\log{n} \leq b \leq n$ blue vertices and coloring at most $4b^2$ additional vertices blue means there are at most $5b^2\le b^3$ blue vertices after the round for $b \ge 5$, 
 the probability that there are at most $b^{(3^r)}$ blue vertices after $r$ rounds is at least $\left(1-O(\frac 1 {(\log{n})^2})\right)^r$\!.   Thus going from $b\le \log{n}$ blue vertices to $n$ blue vertices requires that $(\log{n})^{(3^r)} \ge  n$, or $r\ge  \log_{3}\!\left(\frac {\log{n}} {\log{\log{n}}}\right)$.  
Thus with probability at least $1-o(1)$, it takes $\Omega(\log{\log{n}})$ rounds for the number of blue vertices to increase from at most $\log{n}$ to exactly $n$. 
 Therefore 
 $\ept(K_{n}) = \Omega(\log{\log{n}})$.\end{proof}

The next result is immediate from Lemma \ref{lem:starlog} and Proposition \ref{klower}.

\begin{cor}\label{c:Kn} There exist constants $c,C>0$ such that
$c \log{\log{n}}\le \ept(K_{n}) \le C \log n$.
\end{cor}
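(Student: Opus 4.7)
The plan is to obtain the corollary by directly assembling the two cited results, with only a small amount of bookkeeping about what $\ept(K_n)$ denotes. Recall that $\ept(G)=\min_{v\in V(G)}\ept(G,\{v\})$, and that $K_n$ is vertex-transitive, so the choice of starting vertex is irrelevant and $\ept(K_n)=\ept(K_n,\{v\})$ for any $v$.

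For the lower bound, I would simply invoke Proposition \ref{klower}, which asserts $\ept(K_n)=\Omega(\log\log n)$. By the definition of $\Omega$ given in the introduction, this produces a constant $c>0$ and a threshold $n_0$ such that $\ept(K_n)\ge c\log\log n$ for all $n\ge n_0$; after possibly shrinking $c$ to absorb finitely many small cases (where $\ept(K_n)\ge 1>0$ trivially), the inequality $c\log\log n\le\ept(K_n)$ holds for all $n$ for which the right-hand side is meaningful.

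For the upper bound, the key observation is that any vertex $v$ of $K_n$ is universal, so $N[v]=V(K_n)$ and hence $K_n[N[v]]=K_n$, with $\deg v=n-1$. Lemma \ref{lem:starlog} then yields $\ept(K_n)=\ept(K_n[N[v]])=O(\log\deg v)=O(\log(n-1))=O(\log n)$, producing the required constant $C$. (Equivalently, one could cite Corollary \ref{c:univers}, which is the direct consequence of Lemma \ref{lem:starlog} for graphs with a universal vertex.)

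There is no genuine obstacle here; the corollary really is an immediate combination of the two prior results, and the only thing to check carefully is the identification $K_n[N[v]]=K_n$, which makes Lemma \ref{lem:starlog} applicable to $K_n$ itself rather than just to a neighborhood subgraph inside a larger host.
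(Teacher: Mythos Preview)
Your proposal is correct and follows essentially the same approach as the paper: the corollary is stated there as immediate from Lemma \ref{lem:starlog} (for the upper bound) and Proposition \ref{klower} (for the lower bound), and your write-up simply spells out the identification $K_n[N[v]]=K_n$ that makes Lemma \ref{lem:starlog} apply directly.
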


The next lemma will be used to establish a general upper bound. 

\begin{lem} \label{lem:starlogdist} 
Let $G$ be a graph and suppose that the vertices $v_1, \dots, v_b$ each have degree at most $k$ and all are colored blue.  Then the expected number of rounds until all of their neighbors are colored blue is $O(\log{b} \log{k})$. 
\end{lem}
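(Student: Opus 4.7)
The plan is to reduce the $b$-vertex problem to the single-vertex bound of Lemma~\ref{lem:starlog} via a maximum-of-tails argument. For each $i$, let $T_i$ be the number of rounds (in the actual process on $G$) until every vertex in $N(v_i)$ is blue. The quantity to bound is $\e[\max_i T_i]$. First I would observe that comparing the real process on $G$ to the process on the star $G[\{v_i\}\cup N(v_i)]$ with only $v_i$ initially blue, extra blue vertices in $G$ can only increase the firing probability $|N[v_i]\cap B|/\deg v_i$ and can only add further firings at the neighbors of $v_i$ from outside. Hence $T_i$ is stochastically dominated by the corresponding time in the isolated star starting from $\{v_i\}$, and Lemma~\ref{lem:starlog} gives $\e[T_i] \le C\log(\deg v_i) \le C\log k$ for some absolute constant $C$.

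Second, I would convert this expectation bound into a geometric tail bound by iterated Markov. Markov's inequality gives $\pr(T_i > 2C\log k) \le \tfrac12$. The key monotonicity observation is that if the process has not finished by time $t$, then the current state has at least as many blue vertices in $N[v_i]$ as the initial state did, so the conditional expectation of the remaining time is still at most $C\log k$. Applying Markov to each fresh block of $2C\log k$ rounds yields
\[
\pr\!\left(T_i > 2jC\log k\right)\le 2^{-j}\quad\text{for every } j\ge 0.
\]

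Third, a union bound over $i=1,\dots,b$ gives $\pr(\max_i T_i > 2jC\log k)\le b\cdot 2^{-j}$, and summing tail probabilities yields
\[
\e[\max_i T_i]\ \le\ 2C\log k\cdot \sum_{j\ge 0}\min(1,\ b\cdot 2^{-j})\ =\ O(\log b\,\log k),
\]
since the sum is dominated by the first $\log_2 b + O(1)$ terms (each contributing $1$) plus a geometric tail of size $O(1)$.

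The main obstacle is justifying the iterated Markov step, which hinges on the monotonicity claim that starting with additional blue vertices in $N[v_i]$ cannot increase the expected completion time for $v_i$'s neighborhood. This is intuitive because the probabilistic color change rule only speeds up when more neighbors are blue, but it does need a clean coupling statement (or a direct stochastic-dominance argument at the level of per-round firing probabilities) to invoke Lemma~\ref{lem:starlog} from an arbitrary reachable state rather than only from the singleton $\{v_i\}$. Once that monotonicity is in hand, the rest is routine: Markov, geometric iteration, union bound, and a standard tail-sum computation for the expectation of a maximum.
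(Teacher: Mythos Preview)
Your proposal is correct and follows essentially the same route as the paper: apply Lemma~\ref{lem:starlog} to get an $O(\log k)$ expected time for each $v_i$, use Markov's inequality on blocks of length $2C\log k$ together with the monotonicity of the process to obtain a geometric tail $\pr(T_i>j\text{ blocks})\le 2^{-j}$, and then bound $\e[\max_i T_i]$ by a tail sum. The only cosmetic difference is that the paper writes the max bound as $\pr(X\le x)\ge F(x)^b$ and then uses $1-(1-p)^b\le bp$, which is exactly your union bound in disguise; the monotonicity subtlety you flag is present (and similarly elided) in the paper's argument as well.
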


\begin{proof}
In Lemma \ref{lem:starlog}, we proved that if $v$ is a vertex with $k$ neighbors, then once $v$ turns blue, the expected number of rounds before all of the neighbors of $v$ are blue is at most $c \log{k}$ for some constant $c$.

Define a \emph{block} as a consecutive sequence of $2c \log{k}$ rounds. By Markov's inequality 
the probability that all of the neighbors of $v$ get colored within one block after $v$ is colored is at least $\frac 1 2$. We bound the expected number of blocks for all neighbors of the vertices $v_1, \dots, v_b$ to be successfully colored from the first time at which all of $v_1, \dots, v_b$ have been colored blue.

Let $X_{i}$ be the random variable for the number of blocks that it takes for {all the neighbors of $v_i$ to be colored blue,} 
 and define $X = \max(X_{i}: 1 \leq i \leq b)$. If $F(x) = \min(\pr(X_{i} \leq x): 1 \leq i \leq b)$, then observe that $ \pr(X \leq x) \geq F(x)^{b} $. 

Note that $F(x) \geq 1-\lp\frac 1 2\rp^{\lfloor x \rfloor}$, so 
\bea \e[X] \leq \int_{0}^{\infty} \lp1-\lp1-\lp\frac 1 2\rp^{\lfloor x \rfloor}\rp^{b}\rp dx 
&=& \sum_{n = 0}^{\infty} \lp1-\lp 1-\lp\frac 1 2\rp^{n}\rp^{b}\rp \\
&\leq& \sum_{n = 0}^{\infty} \min \lp1, b \lp\frac 1 2\rp^{n}\rp \leq \lf \log_{2}b \rf + 3.\eea Thus if the vertices $v_1, \dots, v_b$  all are colored blue, then  the expected number of rounds for all neighbors of $v_1, \dots, v_b$ to get colored is $O(\log{b} \log{k})$.
\end{proof}

Since a vertex at a distance $r$ from the one initially blue vertex cannot be reached in fewer than $r$ rounds, it is natural to develop general bounds that apply to all graphs in terms of both the radius, $\rad(G)$, and the order of $G$.   

\begin{thm}\label{t:allgraphbds}
For all connected graphs $G$ of order $n$, \[\rad(G) \leq \ept(G) = O(\rad(G) (\log n)^2)\]
and the lower bound is asymptotically tight.  \end{thm}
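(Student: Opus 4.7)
The plan is to prove the two inequalities separately and then exhibit a family that makes the lower bound tight.

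For the lower bound, I would first observe that for any realization of the probabilistic zero forcing process starting from $Z=\{v\}$, a vertex $w$ with $\dist(v,w)=r$ remains white through round $r-1$: in any single round, a white vertex can be forced only from a blue neighbor, so the set of blue vertices can grow by at most one step along a fixed geodesic from $v$ to $w$. Hence $\ptpf(G,\{v\})\ge \max_{w\in V(G)}\dist(v,w)=\operatorname{ecc}(v)$ with probability one, and therefore $\ept(G,\{v\})\ge \operatorname{ecc}(v)$. Minimizing over $v\in V(G)$ gives $\ept(G)\ge \rad(G)$. Tightness follows from paths: by Proposition~\ref{prop:ept-path}, $\ept(P_n)=n/2+O(1)$, while $\rad(P_n)=\lfloor n/2\rfloor$, so $\ept(P_n)/\rad(P_n)\to 1$ as $n\to\infty$.

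For the upper bound, my plan is a level-by-level analysis starting from a center vertex. Let $v$ be a vertex with $\operatorname{ecc}(v)=\rad(G)=:r$, and partition the vertex set into BFS levels $L_i=\{u\in V(G):\dist(v,u)=i\}$ for $0\le i\le r$; note that $L_i\subseteq N(L_{i-1})$ for $i\ge 1$ because every shortest $v$-to-$u$ path visits consecutive levels. Running the process with $Z=\{v\}$, define the stopping time $T_i$ to be the first round at which $L_0\cup L_1\cup\cdots\cup L_i$ is entirely blue; then $T_0=0$ and $\ptpf(G,\{v\})\le T_r$. Once $L_{i-1}$ is fully blue at round $T_{i-1}$, I apply Lemma~\ref{lem:starlogdist} with $b=|L_{i-1}|\le n$ and $k\le n-1$ to bound the expected additional rounds needed until every neighbor of $L_{i-1}$, and hence all of $L_i$, is blue by $O(\log b\,\log k)=O((\log n)^2)$. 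Summing via linearity of expectation over the $r$ levels gives $\ept(G)\le \e[T_r]=\sum_{i=1}^{r}\e[T_i-T_{i-1}]=O(\rad(G)(\log n)^2)$.

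The main subtlety will be justifying this "restart" argument rigorously: at time $T_{i-1}$ the blue set typically contains more than just $L_{i-1}$, whereas Lemma~\ref{lem:starlogdist} is stated for a clean initial configuration consisting of $b$ blue vertices. I would handle this via a standard monotone coupling, noting that the forcing probability in \eqref{eq:pforce} is nondecreasing in the blue set $B$: hence a process with strictly more blue vertices can be coupled to color each vertex no later than the corresponding process with fewer blue vertices. Applying this to the process restarted at time $T_{i-1}$ versus a fictitious process initialized with only $L_{i-1}$ blue shows $\e[T_i-T_{i-1}\mid\mathcal F_{T_{i-1}}]$ is bounded by the Lemma~\ref{lem:starlogdist} estimate, which legitimizes the level-by-level summation and completes the proof.
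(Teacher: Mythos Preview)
Your proposal is correct and follows essentially the same route as the paper: both prove the lower bound by the distance argument (with tightness via paths/cycles), and both obtain the upper bound by starting at a center vertex and applying Lemma~\ref{lem:starlogdist} layer by layer to bound each level's expected completion time by $O((\log n)^2)$, then summing over the $\rad(G)$ levels. Your version is actually more careful than the paper's in explicitly isolating the BFS stopping times $T_i$ and justifying the restart via a monotone coupling; the paper simply asserts the layer-by-layer application of the lemma without addressing that subtlety.
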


\begin{proof}
The lower bound of $\rad(G)$ is immediate because the vertices colored in round $i$ can be distance at most $i$ from the one vertex that was colored blue initially.  The path and cycle show that the lower bound is asymptotically tight (see Propositions \ref{prop:ept-path} and \ref{prop:ept-cycle}).

For the upper bound, initially color a center vertex of $G$ blue. At an arbitrary step of the coloring process, suppose that there are $b \leq n$ blue vertices $v_1, \dots, v_b$ that have at least one white neighbor. In Lemma \ref{lem:starlogdist}, we proved that if $v_{1}, \dots, v_{b}$ are vertices each with at most $k$ neighbors, then there exists a constant $c$ such that once $v_{1}, \dots, v_{b}$ are all blue, the expected number of rounds before all of their neighbors are blue is at most $c \log{k} \log{b}$.

 Thus after the round during which the last of $v_1, \dots, v_b$ is colored blue, the expected number of rounds for all neighbors of $v_1, \dots, v_b$ to get colored blue is $O((\log{n})^{2})$. Since all vertices in $G$ are within distance $\rad(G)$ of the initial blue vertex, the expected number of rounds until every vertex in $G$ is blue is $O(\rad(G) (\log{n})^{2})$.
\end{proof}

We do not have any examples showing the upper bound in Theorem \ref{t:allgraphbds} is tight.

\begin{cor}\label{cor:randept}
For fixed $0 < p < 1$, with high probability $\ept(G(n, p)) = O((\log n)^2)$.
\end{cor}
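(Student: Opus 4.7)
The plan is to derive the corollary directly from Theorem \ref{t:allgraphbds} by showing that, for fixed $0<p<1$, a random graph $G(n,p)$ is connected and has constant radius with high probability. Once $\rad(G(n,p))=O(1)$ w.h.p., the bound $\ept(G)=O(\rad(G)(\log n)^2)$ immediately yields $\ept(G(n,p))=O((\log n)^2)$ w.h.p.

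The first step is the standard observation that for fixed $p\in(0,1)$, $G(n,p)$ is connected with high probability: the probability that any given vertex is isolated is $(1-p)^{n-1}$, and a union bound (or the classical threshold result for connectivity, which is far below any fixed $p$) gives that $G(n,p)$ is connected with probability $1-o(1)$.

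The second (and main) step is to bound the radius by a constant, for which it suffices to show the diameter is at most $2$ with high probability. Fix two distinct vertices $u,v$ and condition on the remaining $n-2$ vertices. If $u$ and $v$ are non-adjacent, then they fail to share a common neighbor exactly when, for each of the $n-2$ other vertices $w$, at least one of the edges $uw$, $vw$ is missing; this event has probability $(1-p^2)^{n-2}$. A union bound over all $\binom{n}{2}$ pairs gives
\[
\pr\bigl(\operatorname{diam}(G(n,p))>2\bigr)\le \binom{n}{2}(1-p^2)^{n-2} = o(1),
\]
since $p$ is a positive constant and $(1-p^2)^{n-2}$ decays exponentially. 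Hence $\rad(G(n,p))\le \operatorname{diam}(G(n,p))\le 2$ with high probability.

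The third step is to combine these two facts with Theorem \ref{t:allgraphbds}. On the high-probability event that $G(n,p)$ is connected and has radius at most $2$, the theorem gives $\ept(G(n,p)) = O(\rad(G(n,p))\,(\log n)^2) = O((\log n)^2)$, which is the desired conclusion. There is no real obstacle in this argument; the only thing one needs to be slightly careful about is that Theorem \ref{t:allgraphbds} requires connectedness, so one must explicitly intersect with the (high-probability) event that $G(n,p)$ is connected before invoking it.
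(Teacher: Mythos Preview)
Your proof is correct and follows essentially the same approach as the paper: both invoke the well-known fact that for fixed $0<p<1$ the random graph $G(n,p)$ has diameter at most $2$ with high probability (hence radius at most $2$), and then apply Theorem~\ref{t:allgraphbds}. Your version simply spells out the union-bound computation for the diameter and explicitly verifies connectedness, which the paper leaves implicit.
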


\begin{proof}
For fixed $0 < p < 1$, with high probability the random graph $G(n, p)$ has diameter $2$, so with high probability $G(n, p)$ has radius at most $2$. Thus the result follows from Theorem \ref{t:allgraphbds}.
\end{proof}

In a tree each vertex is a cut-set, which means that in PSD zero forcing (introduced in \cite{smallparam} and defined below) a blue vertex can force each white neighbor (deterministically) in each time step.  This resembles probabilistic zero forcing because in the latter each blue vertex fires at each white neighbor. 
The \textit{PSD color change rule} consists of coloring $w_i\in W_i$ blue when $w_i$ is the only white neighbor in $G[W_i \cup B]$ of a blue vertex $v$, where $B$ is the set of blue vertices and $W_1,...,W_k$ are the sets of white vertices corresponding to the connected components of $G-B$.  Other terms such as PSD zero forcing set are  defined analogously to those for zero forcing.  Starting with a PSD zero forcing set of blue vertices $Z$, the number of time steps required for this process to color all vertices is the \textit{PSD propagation time} of  $Z$, denoted by $\ptp(G,Z)$ \cite{PSDpropTime}.   

\begin{rem}\label{obs:treeprop}
Let $T$ be a tree and   $Z$ be a nonempty set of vertices of $T$.  Since each vertex of $T$ is a cut-set,  the probability of a blue vertex $v$ forcing a white neighbor $w$ is one using deterministic PSD zero forcing, and at most one using probabilistic zero forcing.  Thus the  propagation time for probabilistic zero forcing cannot be less than PSD propagation time. 
 This implies $\ept(T, Z) \geq \ptp(T, Z)$ for every tree $T$ and  nonempty set $Z$ of vertices in $T$.

Similarly,  for every cycle $C$ and set $Z$ of vertices in $C$ with $|Z| \geq 2$, $\ptp(C, Z) \leq \ept(C, Z)$, because any two blue vertices allow deterministic PSD zero forcing to begin.  
\end{rem}

A {\em spider} is a tree with exactly one vertex of degree at least three, which is called the  {\em body vertex}. The {\em legs} are the paths that result from deleting the body vertex.  The number of legs is the degree of the body vertex.

\begin{prop}\label{prop:spiderep}
Let  $G$ be a spider with $k$ legs.  Then $\ept(G) = \rad(G) + O(\log{k})$.
\end{prop}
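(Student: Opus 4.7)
The plan is to establish the upper bound $\ept(G) \le \rad(G) + O(\log k)$; the matching lower bound $\ept(G) \ge \rad(G)$ is immediate from Theorem~\ref{t:allgraphbds}. Let $v$ denote the body vertex, and label the legs $L_1, \ldots, L_k$ with vertices $u_1^{(i)}, \ldots, u_{\ell_i}^{(i)}$ (where $u_1^{(i)}$ is adjacent to $v$) ordered so that $\ell_1 \ge \ell_2 \ge \cdots \ge \ell_k$, and set $d = \lfloor (\ell_1 - \ell_2)/2 \rfloor$. A direct eccentricity calculation shows $\rad(G) = \ell_1 - d$ and identifies $c = u_d^{(1)}$ (with $c = v$ when $d = 0$) as a center of $G$; I will start the probabilistic zero forcing from $c$.

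The central step is to establish a pointwise bound of the form $T_u \le \dist(c, u) + Z$ valid for every $u \in V(G)$, where $T_u$ is the random round at which $u$ first turns blue and $Z$ is a single random variable (the same for every $u$) with $\e[Z] = O(\log k)$. Taking the maximum over $u$ and using $\max_u \dist(c, u) = \rad(G)$ then yields $\ept(G, \{c\}) = \e[\max_u T_u] \le \rad(G) + \e[Z] = \rad(G) + O(\log k)$. This framing is essential: loose estimates such as $\e[\max(X, Y)] \le \e[X] + \e[Y]$ would double-count the radius, whereas pulling out a single shared overhead avoids the blow-up.

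To construct $Z$ I split the process into propagation along $L_1$ and the subsequent star-like spread at the body. Assume first $d \ge 1$; then $c$ has exactly two neighbors and fires at each white one with probability at least $1/2$, so the first-success delays $\delta_{\mathrm{in}}$ and $\delta_{\mathrm{out}}$ (for $c$'s forces toward $v$ and toward the leaf end of $L_1$) are stochastically dominated by geometric random variables with parameter $1/2$, giving $\e[\delta_{\mathrm{in}}], \e[\delta_{\mathrm{out}}] \le 2$. After each of these first forces, every internal vertex of $L_1$ has a unique white neighbor, so further propagation along $L_1$ is deterministic, yielding $T_u \le \dist(c, u) + \delta_{\mathrm{in}} + \delta_{\mathrm{out}}$ for $u \in L_1$ and in particular $T_v \le d + \delta_{\mathrm{in}}$ (in the case $d = 0$, set $\delta_{\mathrm{in}} = \delta_{\mathrm{out}} = 0$ and $T_v = 0$). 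Once $v$ is blue at time $T_v$, a monotonicity coupling together with Lemma~\ref{lem:starlog} shows that the time $T^*$ at which every neighbor of $v$ is blue satisfies $\e[T^* - T_v] = O(\log k)$. For any $u = u_j^{(i)}$ with $i \ne 1$, the remaining $j - 1$ forces down $L_i$ after $u_1^{(i)}$ turns blue are deterministic, so $T_u \le T^* + (j - 1) \le \dist(c, u) + \delta_{\mathrm{in}} + (T^* - T_v)$. Setting $Z = \delta_{\mathrm{in}} + \delta_{\mathrm{out}} + (T^* - T_v)$ then gives the required pointwise bound uniformly in $u$, with $\e[Z] = O(\log k)$.

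The main obstacle is justifying the monotonicity coupling used to apply Lemma~\ref{lem:starlog}: at time $T_v$, the body already has one blue neighbor ($u_1^{(1)}$) and each $u_1^{(i)}$ has degree two in $G$ rather than degree one as in the isolated star $G[N[v]]$. A coupling using the same uniform $[0,1]$ variable on each edge per round shows that the set of blue neighbors of $v$ in the spider process always dominates that in an independent star-forcing process on $G[N[v]]$ starting from $\{v\}$, so the completion time of the spider's restricted process is at most that of the pure star, to which Lemma~\ref{lem:starlog} applies. Assembling these pieces yields $\ept(G) \le \rad(G) + O(\log k)$ and completes the proof.
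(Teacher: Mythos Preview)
Your proof is correct and follows essentially the same approach as the paper: start at a center, absorb an $O(1)$ expected delay until propagation along the longest leg becomes deterministic, reach the body vertex, invoke Lemma~\ref{lem:starlog} for the $O(\log k)$ expected time to color all of its neighbors, and then propagate deterministically down each leg. Your framing via the pointwise bound $T_u \le \dist(c,u)+Z$ and the explicit coupling justification for applying Lemma~\ref{lem:starlog} in the spider make the argument more rigorous than the paper's terser version, which leaves those steps implicit.
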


\begin{proof}
By Theorem \ref{t:allgraphbds}, $\ept(G) \ge \rad(G)$. For the upper bound, we initially color a center $v$ of $G$ blue. Let $u$ be the body vertex of $G$. If $v \neq u$, then the expected time of first force is $\frac 4 3$.  After the first force the process becomes deterministic until $u$ is colored blue. By Lemma \ref{lem:starlog}, the number of rounds after $u$ is colored blue for all of $u$'s neighbors to get colored blue is $O(\log{k})$. Then the process becomes deterministic until the graph is all blue. This proves the upper bound since all vertices of $G$ are within $\rad(G)$ of $v$.
\end{proof}

 Recall that a {\em full $k$-ary tree of height $h$}, denoted by $T_{k,h}$, is constructed from a root by performing $h$ steps in which $k$ leaves are appended to each vertex of degree at most one. Observe that the order  of $T_{k,h}$ is $n = \frac{k^{h+1}-1}{k-1}$, so $h=\log_{k}((k-1)n+1)-1$.  

\begin{prop}\label{prop:k-ary}
Let  $T_{k,h}$ be  a full $k$-ary tree of order $n$.  Then $\log_{k}((k-1)n+1)-1\le \ept(T_{k,h}) = O((\log{n})^2)$, where the constant in the upper bound depends on $k$.
\end{prop}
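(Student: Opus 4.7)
The lower bound is immediate from Theorem~\ref{t:allgraphbds}: the root $r$ of $T_{k,h}$ has eccentricity $h$ while every other vertex has strictly larger eccentricity (any path from a non-root vertex to a leaf in a different subtree of $r$ must pass through $r$, using $k\ge 2$), so $\rad(T_{k,h}) = h = \log_k((k-1)n+1) - 1 \leq \ept(T_{k,h})$.

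For the upper bound, my plan is to initially color the root $r$ blue and analyze the process level by level. Define stopping times $T_0 = 0$ and, for $1 \leq i \leq h$, let $T_i$ be the first round by which every vertex at distance at most $i$ from $r$ is blue. Since every leaf of $T_{k,h}$ lies at level $h$, we have $\ptpf(T_{k,h},\{r\}) \leq T_h$, so it suffices to show $\e[T_h] = O((\log n)^2)$.

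At the random time $T_{i-1}$, the $k^{i-1}$ vertices at level $i-1$ are all blue and each has degree at most $k+1$ in $T_{k,h}$. Because the probabilistic color change rule is Markovian and because Lemma~\ref{lem:starlogdist}'s bound is monotone in the set of already-blue vertices (a simple coupling argument shows that having additional blue vertices can only speed up forcing, since $\pr(u \to w) = |N[u]\cap B|/\deg u$ is nondecreasing in $B$), I would condition on the coloring at $T_{i-1}$ and invoke Lemma~\ref{lem:starlogdist} with $b = k^{i-1}$ and degree bound $k+1$ applied to the subprocess started at $T_{i-1}$. Since every level-$i$ vertex is a neighbor of a level-$(i-1)$ vertex, this yields
\[
\e[T_i - T_{i-1}] \;=\; O\!\lp (\log(k^{i-1})+1)\cdot \log(k+1)\rp \;=\; O(i\log^2 k).
\]

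Summing via linearity of expectation and using $h = \log_k((k-1)n+1) - 1 = O(\log n / \log k)$,
\[
\ept(T_{k,h}) \;\leq\; \e[T_h] \;=\; \sum_{i=1}^h \e[T_i - T_{i-1}] \;=\; O\!\lp h^2 \log^2 k\rp \;=\; O((\log n)^2),
\]
which is the claimed bound (with a constant that can be taken absolute, hence in particular depending only on $k$). The main technical point is justifying the application of Lemma~\ref{lem:starlogdist} at the random time $T_{i-1}$ rather than at a deterministic time; the monotonicity observation together with the strong Markov property of probabilistic zero forcing (implicit in the transition-matrix formulation of Remark~\ref{rem:Markov}) make this routine.
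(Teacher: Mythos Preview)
Your proof is correct and follows essentially the same approach as the paper: color the root, then apply Lemma~\ref{lem:starlogdist} level by level (with $b=k^{i-1}$ and degree bound $k+1$) and sum the resulting $O(i)$ bounds over $h=\Theta(\log n)$ levels. Your treatment is in fact slightly more careful in two respects---you explicitly flag the random-time application of Lemma~\ref{lem:starlogdist} and justify it via monotonicity and the Markov property, and by tracking the $\log k$ factors you observe that the constant in the $O((\log n)^2)$ bound can be taken absolute rather than $k$-dependent---but the argument is otherwise the same.
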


\begin{proof}
The lower bound follows from Theorem  \ref{t:allgraphbds} and $\rad(T_{k,h})=h=\log_{k}((k-1)n+1)-1$.   For the upper bound, we initially color the root vertex $v$ of $G$ blue. The expected number of rounds for all of the neighbors of $v$ to be colored blue is $O(\log{k})$ by Lemma \ref{lem:starlog}.

Suppose that at some stage of the coloring process, all of the $k^{t}$ vertices $v_1, \dots, v_{k^{t}}$ in level $t$ of the $k$-ary tree have been colored blue. By Lemma \ref{lem:starlogdist}, the expected number of rounds for all neighbors of $v_1, \dots, v_{k^{t}}$ to get colored after $v_1, \dots, v_{k^{t}}$ have been colored is $O(t)$, where the constant in the bound depends on $k$.

Since $T_{k,h}$ has $h=\Theta(\log{n})$ levels (where the constants in the bound depend on $k$), the expected number of rounds for every vertex in $G$ to be colored is $\sum_{t = 1}^{O(\log{n})} O(t) = O((\log{n})^2)$.
\end{proof}

In zero forcing adding an edge can raise or lower the number of vertices in a minimum zero forcing set, and raising the number of vertices usually lowers propagation time and vice versa.  In theory adding an edge could either raise or lower the expected propagation time (raise it by raising a degree of a vertex, lower it by providing a shorter route between vertices.  In practice adding an edge  never seems to raise expected propagation time, but our data is limited.

\begin{quest}\label{q:mon} For all graphs $G$ and $H$ on the same set of vertices with $H$ a subgraph of $G$, is $\ept(G) \leq \ept(H)$?
\end{quest}


\section{Discussion of Kang and Yi's $P_A(G)$}\label{s:PAG}
In this section we discuss  Kang and Yi's parameter $P_A(G)$ defined in \cite{KY13} and present a counterexample to one of its claimed properties.

A graph $G$ together with an assignment of one of the colors blue and white to each vertex of $G$ is called a {\em colored graph}.  We use $G_B$ to denote the colored graph with underlying graph $G$ and set of blue vertices $B$.  For $B\subseteq V(G)$, $S_B^k$ denotes the set of colored graphs  that are possible (i.e., have positive probability) after the $k$th round starting with $G_B$; note that $S_B^0=\{G_B\}$.    For $R^k\subseteq S_B^k$, $\Pk k(R^k)$ is the probability that after round $k$ the result is one of the colored graphs in $R^k$. Let $T_B^k=\{G_Z\in S_B^k: Z\mbox{ is a  zero forcing set for }G\}$.  Then $P_B(G)=\Pk{k_0}(T_B^{k_0})$ where $k_0$ is the least $k$ that $T_B^k\ne\emptyset$ \cite{KY13} (and $P_\emptyset(G)=0$).  


In \cite[page 13]{KY13}, Kang and Yi claim the following three properties are clear for $P_B(G)$:
\ben
\item $P_\emptyset(G)=0$.
\item If $Z$ is a zero forcing set for $G$, then $P_Z(G)=1$.
\item If $A\subseteq B\subseteq V(G)$, then $P_A(G)\le P_B(G)$.
\een
The first of these properties is by definition (this was not explicit in the definition in \cite{KY13} but is clearly what is intended), and the second follows from the fact that $T_Z^0=\{G_Z\}$ since $Z$ is a zero forcing set,  and thus $\Pk 0 (T_Z^0)=1$.  However, the third property, $A\subseteq B\Rightarrow P_A(G)\le P_B(G)$, is not true.  The problem is that the definition depends on the round in which it is first possible to have a zero forcing set colored blue.  With a larger $B$, this may occur in an earlier round but with lower probability.  This is illustrated in the next example.

\begin{figure}[!ht]
\begin{center}
\scalebox{.8}{\includegraphics{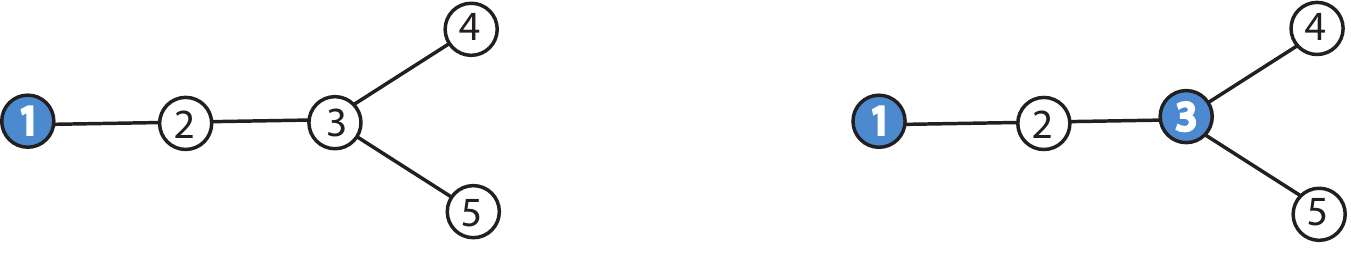}}\\
$\null$\qquad\ \ $G_A$ \qquad\qquad \qquad\qquad\qquad\qquad\qquad\qquad\ $G_B$ 
\caption{Two colorings of a graph $G$ with $A\subset B$ and $P_A(G)>P_B(G)$.}\label{fig:counterex}\vspace{-15pt}
 \end{center}
\end{figure}

\begin{ex}\label{ex:counterex} Let $G$ be the graph shown  in Figure \ref{fig:counterex} with two colorings  $A=\{1\}$ and $B=\{1,3\}$.  
Any zero forcing set for $G$ must contain at least one of 4 and 5, and  this is sufficient to guarantee a zero forcing set is blue given that vertex 1 is blue. 

For $G_A$, it takes at least three rounds to reach vertex 4 or 5 and it is possible to color  4 or 5 blue in the third round.   Thus $P_A(G)=\Pk 3 (T_A^3)$ where $T_A^3$ is the set of  all colored graphs attainable from $G_A$ in three rounds that have at least one of 4 or 5 blue. Vertex  2 is forced in the first round and vertex 3 is colored blue in the second round, so $T_A^3=\{G_{\{1,2,3,4\}},G_{\{1,2,3,5\}},G_{\{1,2,3,4,5\}}\}$.  
  The probability $3\to 4$ (or $3\to 5$) in the third round is $\frac 2 3$, so the probability of at least one of 4 and 5 being colored blue is $1-\lp\frac 1 3 \rp\lp\frac 1 3\rp=\frac 8 9=\Pk 3 (T_A^3)=P_A(G)$.    

For $G_B$, it is possible to color at least one of 4 or 5 in round one.  The probability of $3\to 4$ (or $3\to 5$)  is $\frac 1 3$, so the probability of at least one being forced in round one is 
$1-\lp\frac 2 3 \rp\lp\frac 2 3\rp=\frac 5 9=\Pk 1 (T_B^1)=P_B(G)$.

Thus
\[ P_A(G)=\frac 8 9>\frac 5 9 = P_B(G).\]
\end{ex}

\section{$\ell$-round probability}\label{s:ell-round}

While we believe that expected propagation time is the most interesting parameter associated with probabilistic zero forcing, in this section we define the $\ell$-round probability, which has the first and third properties desired by Kang and Yi  and a modified form of the second (see Proposition \ref{p:KYwant} below). We find these three properties intuitive for $\ell$-round probability, but the proof of the third is surprisingly subtle.  
Given a connected graph $G$, a set $B$ of blue vertices, and a positive integer $\ell$, the {\em $\ell$-round probability of $B$},
$\Pk\ell_B(G)$, is the probability that all vertices of $G$ are blue after $\ell$ rounds of probabilistic zero forcing starting with exactly the vertices of $B$ blue.  The {\em  $\ell$-round probability of $G$},  
$\Pk\ell(G)$, is the maximum of $\Pk\ell_B(G)$ over all one vertex sets  $B$.  Note that Aazami \cite{Aaz08, Aaz10} introduced the study of $\ell$-round zero forcing (an $\ell$-round zero forcing set must be able to color the entire graph blue in at most $\ell$ time steps).

\begin{prop}\label{p:KYwant}
Let $G$ be a graph and $\ell$ be a positive integer.  Then\vspace{-5pt}
\ben[{\rm (i)}]
\item\label{KY1} $\Pk\ell_\emptyset(G)=0$.\vspace{-5pt}
\item\label{KY2} If $Z$ is a zero forcing set for $G$ and $\ell\ge \pt(G,Z)$, then $\Pk\ell_Z(G)=1$.\vspace{-5pt}
\item\label{KY3} If $A\subseteq B\subseteq V(G)$, then $\Pk\ell_A(G)\le \Pk\ell_B(G)$.
\een
\end{prop}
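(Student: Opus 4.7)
Part (\ref{KY1}) is immediate from the definition: with no initially blue vertices the probabilistic color change rule produces no firings, so no vertex ever turns blue and $\Pk\ell_\emptyset(G)=0$.

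For part (\ref{KY2}) I will compare the probabilistic process started at $Z$ with the deterministic zero forcing process started at $Z$ on a common probability space. Let $D_t$ and $P_t$ denote, respectively, the deterministic blue set and the (random) probabilistic blue set after $t$ rounds, with $D_0=P_0=Z$. I will show by induction on $t$ that $D_t\subseteq P_t$ almost surely. Suppose $D_{t-1}\subseteq P_{t-1}$ and let $w\in D_t\setminus D_{t-1}$ be deterministically forced by $u\in D_{t-1}$, so $N(u)\setminus\{w\}\subseteq D_{t-1}\subseteq P_{t-1}$. If $w\in P_{t-1}$ there is nothing to prove; otherwise $N[u]\cap P_{t-1}=(N(u)\setminus\{w\})\cup\{u\}$ has size $\deg u$, so by \eqref{eq:pforce} the probability of the force $u\to w$ in the $P$-process is $1$, and $w\in P_t$ almost surely. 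Finitely many such forces per round give $D_t\subseteq P_t$ a.s., and at $t=\pt(G,Z)$ this yields $V(G)=D_{\pt(G,Z)}\subseteq P_{\pt(G,Z)}\subseteq P_\ell$ a.s., hence $\Pk\ell_Z(G)=1$.

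For part (\ref{KY3}), the subtle claim, I will build a monotone coupling of the two processes. Introduce on a common probability space an independent family $(U_{(u,w),t})$ of uniform $[0,1]$ random variables indexed by the ordered pairs of adjacent vertices of $G$ and positive integers $t$. Run the $A$-process by declaring, at round $t$, that $u\to w$ precisely when $u\in A_{t-1}$, $w\notin A_{t-1}$, and $U_{(u,w),t}\le|N[u]\cap A_{t-1}|/\deg u$, and run the $B$-process identically with $B_{t-1}$ in place of $A_{t-1}$. Conditional on the current blue set, the events $\{U_{(u,w),t}\le p\}$ are independent Bernoulli$(p)$, so each process has the correct marginal distribution.

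The crux is to show by induction that $A_t\subseteq B_t$ on this coupled space. Assuming $A_{t-1}\subseteq B_{t-1}$, fix $w\in A_t$. If $w\in A_{t-1}$ then $w\in B_{t-1}\subseteq B_t$, so assume some $u\in A_{t-1}$ satisfies $U_{(u,w),t}\le|N[u]\cap A_{t-1}|/\deg u$. Then $u\in B_{t-1}$ by the induction hypothesis, and monotonicity of $B'\mapsto|N[u]\cap B'|$ gives $U_{(u,w),t}\le|N[u]\cap B_{t-1}|/\deg u$; so either $w\in B_{t-1}\subseteq B_t$ already, or the force $u\to w$ fires in the $B$-process and $w\in B_t$. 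Hence $A_\ell\subseteq B_\ell$ surely, so $\{A_\ell=V(G)\}\subseteq\{B_\ell=V(G)\}$ and $\Pk\ell_A(G)\le\Pk\ell_B(G)$. The main obstacle is the coupling design itself: enlarging the initial blue set simultaneously raises firing thresholds and can preempt forces by pre-coloring their targets, so comparing per-vertex success probabilities naively is not enough. Assigning one shared uniform per directed edge per round and reading off each process's threshold respects both monotonicities at once.
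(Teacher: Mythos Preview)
Your proof is correct. Parts (\ref{KY1}) and (\ref{KY2}) match the paper's arguments, with your inductive comparison $D_t\subseteq P_t$ making explicit what the paper summarizes as ``probabilistic zero forcing is not slower.''

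For part (\ref{KY3}), your monotone coupling via shared uniforms $U_{(u,w),t}$ is mathematically the same construction as the paper's, but packaged very differently. The paper treats the event space as a multidimensional unit cube with one dimension per directed edge per round, builds a hierarchy of rectangular partitions $\mathcal{R}_{A,i}$ labeled by blue sets, and then transforms these into partitions for $B$ by successively cutting rectangles along failure dimensions and augmenting labels. Unwound, this is exactly your coupling: each dimension is one of your $U_{(u,w),t}$, and the rectangle cuts are the threshold comparisons $|N[u]\cap A_{t-1}|/\deg u$ versus $|N[u]\cap B_{t-1}|/\deg u$. Your formulation in standard coupling language is considerably shorter and makes the monotonicity transparent in one inductive line; the paper's geometric framing, while equivalent, carries more bookkeeping. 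One incidental gain from the paper's version is that it explicitly states and proves the stronger conclusion $\Pk\ell_A(C)\le\Pk\ell_B(C)$ for an arbitrary target set $C$, though your argument yields this immediately as well from $A_\ell\subseteq B_\ell$.
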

\begin{proof}
\eqref{KY1}: Since only blue vertices can force, $\Pk\ell_\emptyset(G)=0$.  
For \eqref{KY2}, let $Z$ be  a zero forcing set.  Using only zero forcing  will color the entire graph blue in  $\pt(G,Z)$ time steps (rounds).  Since probabilistic zero forcing is not slower, $\Pk\ell_Z(G)=1$.

For \eqref{KY3}, we prove a stronger result. For any $C, A \subset V(G)$, define $\Pk\ell_A(C)$ to be the probability that all vertices of $C$ are blue after $\ell$ rounds of probabilistic zero forcing starting with exactly the vertices of $A$ blue. We prove that $\Pk\ell_A(C)\le \Pk\ell_B(C)$ for any $C \subset V(G)$ and $A\subseteq B\subseteq V(G)$.  Choosing $C=V(G)$ then establishes \eqref{KY3}.

Suppose that $A\subseteq B\subseteq V(G)$. We treat the event space for the probabilistic zero forcing process through the $\ell^{th}$ round as a multidimensional unit cube, where each possible event  corresponds to a dimension in the event space. For each ordered pair $(u,v)$ such that $u$ and $v$ are adjacent and each  $i=1,\dots, \ell$, we define the event  \emph{vertex $u$ colors vertex $v$ in round $i$}  to mean that $v$ is blue after round $i-1$ or $u\to v$ in round $i$ (it does not matter if other vertices also force $v$ in round $i$). This differs slightly from probabilistic zero forcing in that $u$ cannot force $v$ if $v$ is already blue. 
However, this modification results in the same vertices being blue and white after each round. We can define an event space for each of  $A$ and $B$.

For the event space of $A$, we define a hierarchy of partitions $\mathcal{R}_{A, i}$ of the space for $0 \leq i \leq \ell$. First $\mathcal{R}_{A, 0}$ is defined as the whole cube. Given $\mathcal{R}_{A, i}$, we define $\mathcal{R}_{A, i+1}$ to consist of the multidimensional rectangles obtained from those in $\mathcal{R}_{A, i}$ by cutting each rectangle on the dimensions corresponding to the possible events for the $(i+1)^{st}$ round according to their probabilities of occurrence. Each rectangle will also be assigned a subset of vertices as a label; $\mathcal{R}_{A, 0}$ is labeled with the subset $A$. For each rectangle $R$ in $\mathcal{R}_{A, i}$, the rectangles obtained from $R$ in $\mathcal{R}_{A, i+1}$ will have labels that contain $R$'s label as a subset. 
Next we describe how to generate rectangles in $\mathcal{R}_{A, i+1}$ from the rectangles and labels of $\mathcal{R}_{A, i}$.

 If $v$ is blue after round $i$ in $R$, then the dimension corresponding to each event of the form \emph{vertex $u$ colors vertex $v$ in round $i+1$} in $R$ will be cut into lengths of $1$ (probability of success) and $0$ (probability of failure). 
If vertices $u$ and $v$ are both not blue after round $i$ in $R$, then the dimension corresponding to each event of the form \emph{vertex $u$ colors vertex $v$ in round $i+1$} in $R$ will be cut into lengths of $0$ (probability of success) and $1$ (probability of failure). If $u$ is blue, has $b$ blue neighbors after round $i$ in $R$ (i.e., $b$ neighbors of $u$ are in the label for $R$), and $\deg u=d > b$, then the dimension corresponding to each event of the form \emph{vertex $u$ colors vertex $v$ in round $i+1$} in $R$ for white neighbor $v$ will be cut into lengths of $\frac{b+1}{d}$ (probability of success) and $\frac{d-b-1}{d}$ (probability of failure). 

For each of the rectangles in $\mathcal{R}_{A, i}$ for $i \geq 1$, we label the rectangle with the subset of vertices that are blue either because they are in $A$ or as a result of a successful event in the rectangle. Note that any dimensions of the rectangle corresponding to failed fires do not contribute any elements to the subset label for the rectangle. Observe that the volume of each rectangle is the probability that all of the events in the rectangle occur in the first $i$ rounds, and that $\Pk{i}_A(C)$ is the sum of the volumes of the rectangles in $\mathcal{R}_{A, i}$ whose label sets contain $C$ as a subset.

For each $i$, we now transform $\mathcal{R}_{A, i}$ into a set of rectangles for the event space of $B$ up to turn $i$ by cutting a subset of the rectangles in each dimension and modifying their subset labels. The modifications are performed in order of rounds. The only modification to $\mathcal{R}_{A, 0}$ is its subset label, which is changed from $A$ to $B$. For each rectangle $R \in \mathcal{R}_{A, i}$, the rectangles obtained from modifying $R$ will be called the \emph{descendants} of $R$. The transformation will have the property that $\Pk\ell_B(C)$ is the sum of the volumes of the descendants from $\mathcal{R}_{A, \ell}$ whose labels contain $C$ as a subset. Also note that the transformation will only cut dimensions of rectangles corresponding to failed fires. Since dimensions of rectangles corresponding to failed fires do not contribute any elements to subset labels, the transformation will only add elements to the subset labels of all descendants (or leave the subset labels unchanged), so we will obtain  $\Pk\ell_A(C)\le \Pk\ell_B(C)$.

Since $A \subseteq B$, the transformation adds a nonnegative number of blue vertices at the beginning of the process, and propagates their effects through the first $i$ rounds. Fix $R \in \mathcal{R}_{A, i+1}$, and suppose that $R$ was constructed from $Q \in \mathcal{R}_{A, i}$. For any vertex $u$ that is blue as a result of the first $i$ rounds in $R$ (and $Q$), suppose that $u$ has $b$ blue neighbors after round $i$ in $R$ (and $Q$) and $\deg u =d > b$. Thus the sidelength of $R$ corresponding to the event \emph{vertex $u$ colors vertex $v$ in round $i+1$} for white neighbor $v$ is either $\frac{b+1}{d}$ or $\frac{d-b-1}{d}$ depending on whether this dimension is a success or failure in $R$.

To construct each descendant of $R$, we cut $R$ in the dimensions corresponding to the events of the first $i$ rounds to form the descendants of $Q$ when restricted to those dimensions, and we add the same vertices to the subset labels of the new rectangles that were added in the descendants of $Q$. For any descendant $Q'$ of $Q$, if adding the vertices of $B-A$ causes $v$ to be colored in the first $i$ rounds in $Q'$, then the probabilities of the event \emph{vertex $u$ colors vertex $v$ in round $i+1$} are $1$ (success) and $0$ (failure). If $R$ has success for the event \emph{vertex $u$ colors vertex $v$ in round $i+1$}, then the descendants of $R$ in this dimension have the same sidelength as $R$ and gain no new vertices in the label from this dimension, since $R$ already had $v$ in the label. If $R$ has failure for the event, then we add $v$ to the subset label of the descendant of $R$ obtained from $Q'$ and make the same sidelength for the dimension as $R$.

If adding the vertices of $B-A$ does not cause $v$ to be colored in the first $i$ rounds in $Q'$ but does introduce $s$ new blue neighbors of $u$ within the first $i$ rounds, then the probabilities for the event \emph{vertex $u$ colors vertex $v$ in round $i+1$} for white neighbor $v$ are $\frac{b+s+1}{d}$ (success) and $\frac{d-b-s-1}{d}$ (failure).  If $R$ has success for the event \emph{vertex $u$ colors vertex $v$ in round $i+1$}, again the descendants of $R$ in this dimension have the same sidelength as $R$ and gain no new vertices in the label from this dimension. If $R$ has failure for the event, then the descendants of $R$ obtained from $Q'$ have sides of length $\frac{s}{d}$ and $\frac{d-b-s-1}{d}$ in the dimension (as a result of splitting the side of length $\frac{d-b-1}{d}$ in $R$) and $v$ is added to the subset label of the descendant with sidelength $\frac{s}{d}$. 

If adding the blue vertices of $B-A$ causes $u$ to be colored in the first $i$ rounds in $Q'$ (and $u$ was not colored in $R$), and if $u$ has $b$ blue neighbors after round $i$ in $Q'$ and $\deg u=d > b$, then the probabilities for the event \emph{vertex $u$ colors vertex $v$ in round $i+1$} for white neighbor $v$ are $\frac{b+1}{d}$ (success) and $\frac{d-b-1}{d}$ (failure). Since $R$ has failure for the event and length $1$, then the descendants of $R$ obtained from $Q'$ have sides of length $\frac{b+1}{d}$ and $\frac{d-b-1}{d}$ in the dimension (as a result of splitting the side of length $1$ in $R$) and $v$ is added to the subset label of the descendant with sidelength $\frac{b+1}{d}$. 

Observe that all of the alterations have been on dimensions corresponding to failed fires in $R$, and failed fires did not contribute any elements to the subset labels for $A$. Thus for each rectangle $R \in \mathcal{R}_{A, \ell}$, the alterations have produced a set of rectangles with the same volume as $R$ that each have subset labels which contain all of the vertices in the subset label of $R$. Thus the sum of the volumes of the descendants of $\mathcal{R}_{A, \ell}$ whose label subsets contain all of the elements of $C$ is at least the sum of the volumes of the rectangles in $\mathcal{R}_{A, \ell}$ whose label sets contain $C$ as a subset. This implies  $\Pk\ell_A(C)\le \Pk\ell_B(C)$. 
\end{proof}

Property \eqref{KY2} of Proposition \ref{p:KYwant} has the added stipulation that $\ell\ge\pt(G,Z)$, which is necessary  and seems reasonable given the definition of $\ell$-round probability.  In the definition of $P_A(G)$, the probability is measured after the first round in  which a zero forcing set can be colored blue, and as shown in Section \ref{s:PAG} this is incompatible with the third property.

\begin{prop}\label{p:ell-round-cycle} For a cycle of order $n > 2$, $\Pk\ell(C_{n}) = 0$ for $\ell < \lf\frac n 2\rf$, and for $\ell\ge \lf\frac n 2\rf$,  
\[\Pk\ell(C_{n}) =  \begin{cases} 
1-(\frac 1 4)^{\ell- n/2+1} & \mbox{if $n$ is even}\\
1 -\frac 3 4 \lp\frac 1 4\rp^{\ell-(n-1)/2} &  \mbox{if $n$ is odd}
\end{cases}.\]
\end{prop}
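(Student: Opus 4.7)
The plan is to leverage the observation used in Proposition~\ref{prop:ept-cycle}: once any probabilistic force fires on a cycle, the set of blue vertices forms a connected arc whose endpoints each have a unique white neighbor, so all subsequent propagation is deterministic. Thus the only randomness in $\Pk\ell(C_n)$ comes from the round in which the single initial blue vertex $v$ first succeeds in a fire. By vertex-transitivity of $C_n$, the choice of $v$ does not matter, so $\Pk\ell(C_n)=\Pk\ell_{\{v\}}(C_n)$, and we may fix any $v$ to compute the probability.

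First I would introduce the geometric random variable $T$ for the round of the first successful fire. So long as $v$ is the only blue vertex, it fires at each of its two white neighbors independently with probability $\tfrac{1}{2}$, so the probability of no force in a given round is $\tfrac{1}{4}$; hence $\pr(T=k)=\tfrac{3}{4}(1/4)^{k-1}$ and $\pr(T\le k)=1-(1/4)^k$ for $k\ge 0$. Conditional on a force occurring in round $T$, an easy computation shows that the resulting configuration is two adjacent blue vertices with probability $\tfrac{2}{3}$ (Case A) and three consecutive blue vertices with probability $\tfrac{1}{3}$ (Case B). Next I would count the number of additional (deterministic) rounds needed after round $T$ to fill the cycle. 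The arc grows by two per round until it reaches length $n$, with a possible ``closing'' round in which the last white vertex is simultaneously forced by both of its blue neighbors. Tallying cases gives an additional $(n-2)/2$ rounds when $n$ is even (independent of Case A/B), $(n-1)/2$ rounds when $n$ is odd in Case A, and $(n-3)/2$ rounds when $n$ is odd in Case B.

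To finish, I would translate these counts into the probability that $T$ is small enough for the whole cycle to be blue by the end of round $\ell$. For $n$ even this condition is $T\le \ell-n/2+1$, so $\Pk\ell(C_n)=\pr(T\le \ell-n/2+1)$, which is $0$ for $\ell<n/2$ and $1-(1/4)^{\ell-n/2+1}$ otherwise, matching the stated formula. For $n$ odd, writing $a=(n-1)/2=\lfloor n/2\rfloor$ and conditioning on $T$ and the case yields
\[
\Pk\ell(C_n)\;=\;\tfrac{2}{3}\,\pr(T\le\ell-a)\;+\;\tfrac{1}{3}\,\pr(T\le\ell-a+1),
\]
which vanishes for $\ell<a$ and, for $\ell\ge a$, simplifies via the geometric tail $\pr(T\le k)=1-(1/4)^k$ to $1-\tfrac{3}{4}(1/4)^{\ell-(n-1)/2}$. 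The main obstacle is the bookkeeping for the deterministic phase: one must keep straight the interplay between the parity of $n$, the Case A/B split produced by the first force, and the boundary round at which the arc closes. Once those counts are pinned down, the probability computation reduces to a routine manipulation of the geometric distribution evaluated at two adjacent arguments.
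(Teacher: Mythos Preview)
Your proposal is correct and follows essentially the same approach as the paper: both arguments isolate the geometric waiting time $T$ for the first successful fire (with failure probability $\tfrac14$), split into the one-force versus two-force cases with probabilities $\tfrac23$ and $\tfrac13$, count the deterministic rounds needed thereafter by parity, and then read off the distribution of the total time. The only cosmetic difference is that the paper sums the point-mass probabilities $\pr(\text{total time}=t)$ over $t>\ell$, whereas you work directly with the CDF $\pr(T\le k)=1-(1/4)^k$; these are equivalent computations.
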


\begin{proof}
The probability that the first force occurs in the $k^{th}$ round is $\lp\frac 1 4\rp^{k-1}\lp\frac 3 4\rp$. The probability that two forces occur in the first round that has a force is $\frac 1 3$, and the probability that only one force occurs on the first round that has a force is $\frac 2 3$.

First assume $n$ is even.  Then there will be $\frac n 2 - 1$ rounds after the first force, regardless of how many forces occur on the first round  that has a force (call this round $k$). Thus,  the process takes $t=\frac n 2-1+k$ rounds with probability $\lp\frac 1 4\rp^{k-1}\lp\frac 3 4\rp=\frac 3 4\lp\frac 1 4\rp^{t-n/2}$. This implies that $\Pk\ell(C_{n}) = 0$ for $\ell < \frac n 2$ and\vspace{-5pt} \[\Pk\ell(C_{n}) =1- \sum_{t=\ell+1}^\infty \frac 3 4\lp\frac 1 4\rp^{t-n/2}=1-\lp\frac 1 4\rp^{\ell-n/2+1}\vspace{-5pt}\] for every $\ell \geq \frac n 2.$

Now assume $n$ is odd. Then there will be $\frac{n-1}2$ rounds after the first force if there is only one force on the first round that has a force, and there will be $\frac{n-3}2$ rounds after the first force if there are two forces on the first round that has a force.    The probability of two forces in the first round is $\frac 1 3\lp\frac 3 4\rp=\frac 1 4$, and $t=1+\frac{n-3} 2=\frac{n-1} 2$ rounds are needed to color all vertices blue. For each $t\ge  \frac{n-1} 2+1$,  there are two ways to achieve the last vertex turning blue in round $t$:  Only one force happens in round $t-\frac{n-1}2$ (and no forces earlier) with probability $\frac 2 3 \lp\frac 1 4\rp^{t-(n-1)/2-1}\lp\frac 3 4\rp$.  Two forces happen in round $t-\frac{n-1}2+1$ (and no forces earlier) with probability $\frac 1 3 \lp\frac 1 4\rp^{t-(n-1)/2}\lp\frac 3 4\rp$.  So the probability of the last vertex turning blue in round $t\ge  \frac{n-1} 2+1$ is $\frac 9 {16}  \lp\frac 1 4\rp^{t-(n+1)/2}$.  
This implies that $\Pk\ell(C_{n}) = 0$ for $\ell < \frac{n-1}2$ and \vspace{-5pt}
\[\Pk \ell(C_{n}) = \frac 1 4+\frac 3 4 \lp1-\lp\frac 1 4\rp^{\ell-(n-1)/2}\rp=1-\frac 3 4 \lp\frac 1 4\rp^{\ell-(n-1)/2}\vspace{-5pt}\] 
for every $\ell \geq \frac{n-1}2$. 
\end{proof}

\begin{prop}\label{p:ell-round-path}
For a path of order $n > 2$, $\Pk\ell(P_{n}) = 0$ for $\ell < \lf\frac n 2\rf$, and for $\ell\ge \lf\frac n 2\rf$,  \vspace{-5pt}
\[\Pk\ell(P_{n}) =  \begin{cases} 
1-\frac 1 2 \lp\frac 1 4\rp^{\ell-n/2} & \mbox{if $n$ is even}\\
1 -\frac 3 4 \lp\frac 1 4\rp^{\ell-(n-1)/2} &  \mbox{if $n$ is odd}
\end{cases}.\vspace{-5pt}\]
\end{prop}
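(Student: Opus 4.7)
My plan is to mirror the proof of Proposition \ref{p:ell-round-cycle}, initially coloring the central vertex blue as in the proof of Proposition \ref{prop:ept-path}: I take $v = \lceil n/2 \rceil$. In each round while only $v$ is blue, each of the two neighbors of $v$ fires independently with probability $1/2$, so no force happens with probability $1/4$ and the first force occurs in round $k$ with probability $q_k := \lp\frac 1 4\rp^{k-1}\lp\frac 3 4\rp$. Once two consecutive vertices are blue the process becomes deterministic, and the total propagation time $T$ equals $k + r$ where $r$ depends on which subcase of the first force occurred.

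For odd $n$ the situation is identical to the odd cycle case: $v = (n+1)/2$ has equal distance $(n-1)/2$ to both endpoints. Conditional on the first force occurring in round $k$, with probability $1/3$ both neighbors fire (giving $T = k + (n-3)/2$) and with probability $2/3$ exactly one neighbor fires (giving $T = k + (n-1)/2$). The subsequent computation of $\pr(T \le \ell)$ is then verbatim the odd cycle computation, yielding the claimed $1 - \tfrac{3}{4}\lp\tfrac{1}{4}\rp^{\ell - (n-1)/2}$.

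For even $n$ I would take $v = n/2$, whose distance to vertex $1$ is $n/2 - 1$ and to vertex $n$ is $n/2$. Conditional on the first force in round $k$, three subcases each with probability $1/3$ arise: (a) only the left neighbor $n/2 - 1$ fires, giving deterministic completion at round $k + n/2$ (the bottleneck is the distance $n/2$ from $n/2$ to $n$); (b) only the right neighbor $n/2 + 1$ fires, giving completion at round $k + n/2 - 1$; (c) both fire, also giving completion at round $k + n/2 - 1$. Thus given first force in round $k$, $T = k + n/2$ with probability $1/3$ and $T = k + n/2 - 1$ with probability $2/3$. Writing $\pr(T \le \ell)$ as a sum over $k$, the contributions from $k \le \ell - n/2$ (all subcases succeed) total $\sum_{k=1}^{\ell - n/2} q_k = 1 - \lp\tfrac{1}{4}\rp^{\ell - n/2}$, while $k = \ell - n/2 + 1$ (only cases (b) and (c) succeed) contributes $q_{\ell - n/2 + 1} \cdot \tfrac{2}{3} = \tfrac{1}{2}\lp\tfrac{1}{4}\rp^{\ell - n/2}$; summing gives $\pr(T \le \ell) = 1 - \tfrac{1}{2}\lp\tfrac{1}{4}\rp^{\ell - n/2}$.

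Finally, for $\ell < \lfloor n/2 \rfloor$ the minimum completion time starting from any single vertex is at least $\lfloor n/2 \rfloor$, attained when the first force in round $1$ produces the subcase with the shorter deterministic tail, so $\Pk\ell(P_n) = 0$. The main obstacle is the even-$n$ bookkeeping: two distinct first-force rounds ($k = t - n/2$ via case (a) and $k = t - n/2 + 1$ via cases (b)/(c)) can produce the same completion time $T = t$, and one must weight them correctly when splitting the sum at the boundary $k = \ell - n/2 + 1$.
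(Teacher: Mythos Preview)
Your argument is correct and follows essentially the same approach as the paper's proof: start from the central vertex $\lceil n/2\rceil$, condition on the round $k$ of the first force, and use that the process is deterministic thereafter. The only cosmetic difference is that in the even case you compute $\pr(T\le\ell)$ by summing over the first-force round $k$ (splitting at $k=\ell-n/2+1$), whereas the paper computes $\pr(T=t)$ by combining the two first-force rounds that can produce completion time $t$ and then sums; the arithmetic is equivalent. One small wording slip: it is $v$ that fires at each of its two white neighbors with probability $\tfrac12$, not the neighbors firing---but your probabilities are correct.
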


\begin{proof} The probability of  the first force occurring on the $k^{th}$ round and the probabilities of one or two forces in the round with the first force are the same as for a cycle.  If $n$ is odd, then the situation is the same as for a cycle, so  $\Pk\ell(P_{n}) = 0$ for $\ell < \frac{n-1}2$ and $P^{\ell}(P_{n}) = 1 -\frac 3 4 \lp\frac 1 4\rp^{\ell-(n-1)/2}$ for every $\ell \geq \frac{n-1}2$. 


If $n$ is even, then as in the proof of Proposition \ref{prop:ept-path}, we need to distinguish whether the neighbor in the longer direction is forced: There will be $\frac n 2 - 1$ rounds after the first force with probability $\frac 2 3$, and there will be $\frac n 2$ rounds after the first force with probability $\frac 1 3$. Thus if $n$ is even, then the process takes $t=\frac n 2$ rounds with probability $\lp\frac 3 4\rp\lp\frac 2 3\rp=\frac 1 2$  and $t\ge \frac n 2 +1$ rounds with probability $\lp\frac 2 3 \rp\lp\frac 1 4 \rp^{t-n/2}\lp\frac 3 4\rp + \lp\frac 1 3 \rp\lp\frac 1 4 \rp^{t-n/2-1}\lp\frac 3 4\rp=\lp\frac 3 8\rp \lp\frac 1 4\rp^{t-n/2-1}$. This implies that $\Pk\ell(P_{n}) = 0$ for $\ell < \frac n 2$ and $\Pk\ell(P_{n}) = \frac 1 2+\frac 1 2 (1-\lp\frac 1 4 \rp)^{\ell-n/2})$ for every $\ell \geq \frac n 2$.
\end{proof}

The next lemma is an application of Markov's inequality.

\begin{lem}\label{epttoell}
If $G$ is a connected graph and $\ell > \ept(G)$, then $\Pk\ell(G) \geq 1-\frac{\ept(G)}\ell$.
\end{lem}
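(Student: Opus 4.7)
The plan is to derive the bound directly from Markov's inequality applied to a suitably chosen probabilistic propagation time random variable. Let $v^*\in V(G)$ be a vertex achieving the minimum in $\ept(G)=\min\{\ept(G,\{v\}):v\in V(G)\}$, and set $X:=\ptpf(G,\{v^*\})$. Then $X$ is a nonnegative (in fact positive integer-valued) random variable with $\e[X]=\ept(G,\{v^*\})=\ept(G)$, and by the definition of $\ell$-round probability we have $\Pk\ell_{\{v^*\}}(G)=\pr(X\le \ell)$.

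First I would observe that, by the definition of $\Pk\ell(G)$ as a maximum over single-vertex starting sets,
\[\Pk\ell(G)\ =\ \max_{v\in V(G)}\Pk\ell_{\{v\}}(G)\ \ge\ \Pk\ell_{\{v^*\}}(G)\ =\ \pr(X\le \ell).\]
Next I would apply Markov's inequality to $X$ with $a=\ell$ (which is positive since $\ell>\ept(G)\ge 0$) to obtain $\pr(X\ge \ell)\le \ept(G)/\ell$. Because $X$ is integer-valued, $\pr(X\le \ell-1)\ge 1-\ept(G)/\ell$, and hence $\pr(X\le \ell)\ge 1-\ept(G)/\ell$ as well. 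Chaining the two inequalities yields the desired conclusion $\Pk\ell(G)\ge 1-\ept(G)/\ell$.

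There is no genuine obstacle in the argument: once one identifies the $\ell$-round probability starting from $\{v^*\}$ with the cumulative distribution function of $\ptpf(G,\{v^*\})$ evaluated at $\ell$, the statement is a one-line application of Markov's inequality. The hypothesis $\ell>\ept(G)$ is not used in the derivation itself; it simply ensures that the lower bound $1-\ept(G)/\ell$ is nontrivially positive, so that the conclusion is meaningful.
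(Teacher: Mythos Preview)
Your proof is correct and is exactly the argument the paper has in mind: the paper states only that the lemma ``is an application of Markov's inequality,'' and your proposal spells out precisely that application (choose a vertex $v^*$ realizing $\ept(G)$, apply Markov to $X=\ptpf(G,\{v^*\})$, and pass to $\Pk\ell(G)$ via the maximum). The detour through integer-valuedness is unnecessary, since $\pr(X\le\ell)\ge 1-\pr(X\ge\ell)$ already suffices, but it does no harm.
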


We can also obtain numerous corollaries about $\Pk\ell$ from prior results about expected propagation time and the next lemma, which  follows from the previous one.

\begin{lem}\label{epttoell0}
For all connected graphs $G$, $\ept(G)=O(f(n))$ implies $\Pk\ell(G) = 1-o(1)$ for $\ell = \omega(f(n))$.
\end{lem}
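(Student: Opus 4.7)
The plan is to derive this directly from Lemma \ref{epttoell} by interpreting the asymptotic hypotheses. Given $\ept(G)=O(f(n))$, fix a constant $C>0$ such that $\ept(G)\le Cf(n)$ for all sufficiently large $n$. Given $\ell=\omega(f(n))$, we have $f(n)/\ell\to 0$ as $n\to\infty$.

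First I would observe that these two conditions together imply $\ell > \ept(G)$ for all sufficiently large $n$, so the hypothesis of Lemma \ref{epttoell} is eventually satisfied. Applying that lemma gives
\[\Pk\ell(G)\ \ge\ 1-\frac{\ept(G)}{\ell}\ \ge\ 1-\frac{Cf(n)}{\ell}.\]

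Since $f(n)/\ell=o(1)$ by the assumption $\ell=\omega(f(n))$, the right-hand side is $1-o(1)$. Combined with the trivial upper bound $\Pk\ell(G)\le 1$, this yields $\Pk\ell(G)=1-o(1)$, completing the proof.

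There is no real obstacle here; the statement is a routine unpacking of the asymptotic notation combined with Lemma \ref{epttoell}. The only thing to be careful about is that the bound in Lemma \ref{epttoell} requires $\ell > \ept(G)$, which is why we first need to note that $\ell=\omega(f(n))$ dominates $\ept(G)=O(f(n))$ for large $n$.
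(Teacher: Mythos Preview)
Your proof is correct and matches the paper's approach exactly: the paper simply states that this lemma follows from Lemma~\ref{epttoell}, and your argument is precisely the routine unpacking of the asymptotic notation needed to make that deduction explicit.
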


\begin{cor}\label{cor:pstar}
$\Pk\ell(G) = 1-o(1)$ for $\ell = \omega(\log{n})$ for every graph $G$ with a universal vertex.
\end{cor}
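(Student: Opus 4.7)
The plan is to obtain this corollary by directly chaining the two preceding results, Corollary \ref{c:univers} and Lemma \ref{epttoell0}. First I would observe that any graph $G$ containing a universal vertex $v$ is automatically connected, since $v$ is adjacent to every other vertex; thus both results apply to $G$. Corollary \ref{c:univers} then yields $\ept(G) = O(\log n)$.

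Next I would apply Lemma \ref{epttoell0} with the choice $f(n) = \log n$. The hypothesis $\ept(G) = O(f(n))$ is exactly what Corollary \ref{c:univers} just supplied, so the lemma's conclusion says that for any $\ell = \omega(\log n)$ we have $\Pk\ell(G) = 1 - o(1)$, which is precisely the statement to be proved.

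There is essentially no obstacle here: the corollary is a two-line composition of existing results, and no new probabilistic estimate or structural argument is required. The only item worth explicitly noting in the write-up is that the universal-vertex hypothesis guarantees connectedness, so that Lemma \ref{epttoell0} may legitimately be invoked.
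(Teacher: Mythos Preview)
Your proposal is correct and matches the paper's intended approach: the corollary is stated in the paper without proof precisely because it follows immediately from combining Corollary~\ref{c:univers} with Lemma~\ref{epttoell0}, exactly as you describe. The remark about connectedness is a nice touch but not strictly needed, since Corollary~\ref{c:univers} already implicitly uses it.
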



\begin{cor}
$\Pk\ell(K_{1, n}) = 1-o(1)$ for $\ell = \omega(\log{n})$ and $\Pk\ell(K_{1, n}) = o(1)$ for $\ell = o(\log{n})$.
\end{cor}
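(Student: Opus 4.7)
The corollary is a two-sided asymptotic, so the plan splits naturally into an upper-probability direction ($\ell=\omega(\log n)\Rightarrow \Pk\ell(K_{1,n})=1-o(1)$) and a lower-probability direction ($\ell=o(\log n)\Rightarrow \Pk\ell(K_{1,n})=o(1)$).

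The upper-probability direction is essentially free. Theorem~\ref{starlower} gives $\ept(K_{1,n})=\Theta(\log n)$, so in particular $\ept(K_{1,n})=O(\log n)$. Lemma~\ref{epttoell0} (which is the Markov-inequality packaging of expected propagation time) then immediately yields $\Pk\ell(K_{1,n})=1-o(1)$ whenever $\ell=\omega(\log n)$. So the first half of the corollary is a one-line application.

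The lower-probability direction is the substantive half: a Markov-type estimate only bounds $\Pk\ell$ from below, whereas here we need to bound it from above. My plan is to revisit the concentration argument that already appears in the proof of Theorem~\ref{starlower} and extract a high-probability statement from it. That proof shows, via Chebyshev applied to the binomial random variable counting newly colored leaves in one round, that starting from any configuration with at most $\sqrt n$ blue vertices, with probability at least $\bigl(1-O(1/\sqrt n)\bigr)^{\log_5(\sqrt n/2)} = 1-o(1)$ the number of blue vertices does not exceed $n/2$ within the next $\log_5(\sqrt n/2)$ rounds. Since in particular it cannot reach $n+1$ blue vertices in that window, and since the single starting vertex (whether it is the center or a leaf, the latter case being absorbed by the observation at the end of Theorem~\ref{starlower} that the leaf case only adds one extra round) satisfies $1\le\sqrt n$, this gives $\Pr[\ptpf(K_{1,n},\{v\})\le \log_5(\sqrt n/2)]=o(1)$. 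For any $\ell=o(\log n)$ one has $\ell<\log_5(\sqrt n/2)$ for all sufficiently large $n$, so $\Pk\ell(K_{1,n})\le o(1)$.

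The only bookkeeping issue---and the one spot where a little care is needed---is that the $5\times$ per-round growth bound coming from Chebyshev is proven only for $b\in[\sqrt n,n/2]$, so the argument really controls the crossing time of that window rather than the whole trajectory. But for the lower bound this is already enough: any process that colors all of $K_{1,n}$ must push the blue count across this window, and crossing it takes $\Omega(\log n)$ rounds with probability $1-o(1)$, which is what the corollary needs.
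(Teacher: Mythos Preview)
Your proposal is correct and matches the paper's own proof essentially line for line: the paper cites Corollary~\ref{cor:pstar} (which is just Lemma~\ref{epttoell0} applied to graphs with a universal vertex) for the first bound, and explicitly says the second bound ``is proved by the method used in the proof of Theorem~\ref{starlower},'' which is exactly the concentration argument you extracted. Your final paragraph on the window $[\sqrt n, n/2]$ is a fair acknowledgment of the same looseness present in the paper's argument, and your resolution---that the blue count must traverse this window, during which the per-round factor-$5$ growth bound holds with probability $1-O(1/\sqrt n)$---is the intended reading.
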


\begin{proof}
The first bound follows from Corollary \ref{cor:pstar}, while the second bound is proved by the method used in the proof of Theorem \ref{starlower}. 
\end{proof}

\begin{cor}
$\Pk\ell(K_{n}) = 1-o(1)$ for $\ell = \omega(\log{n})$ and $\Pk\ell(K_{n}) = o(1)$ for $\ell = o(\log{\log{n}})$.
\end{cor}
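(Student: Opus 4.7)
The first bound is immediate from Corollary~\ref{cor:pstar}, since every vertex of $K_n$ is a universal vertex. The substantive work is the second bound. My plan is to mimic the high-probability analysis buried inside the proof of Proposition~\ref{klower} and to show directly that with probability $1-o(1)$ not every vertex of $K_n$ is blue after $\ell=o(\log\log n)$ rounds. Let $b_i$ be the number of blue vertices after round $i$, with $b_0=1$. The Bernoulli-inequality computation from Proposition~\ref{klower} gives $\e[b_{i+1}\mid b_i]\le b_i+b_i^2\le 2b_i^2$. The Chebyshev step used there has per-round failure probability $O(1/b_i^2)$, which is not $o(1)$ during the early rounds when $b_i$ is small (the issue the authors sidestep by restricting to $b\in[\log n,n]$). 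I would replace it by Markov's inequality with a slowly growing threshold $K=\ell^2$, giving $\pr(b_{i+1}\ge 2\ell^2 b_i^2\mid b_i)\le 1/\ell^2$. A union bound over the $\ell$ rounds keeps the total failure probability at $1/\ell=o(1)$, and the resulting deterministic recursion $\log b_{i+1}\le O(\log\ell)+2\log b_i$ iterates from $b_0=1$ to $\log b_\ell\le O(2^\ell\log\ell)$.

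To finish, I would verify the asymptotics: for $\ell=o(\log\log n)$ both $2^\ell$ and $\log\ell$ are $(\log n)^{o(1)}$, so $\log b_\ell=o(\log n)$ and hence $b_\ell=n^{o(1)}<n$ for all sufficiently large $n$. Thus with probability $1-o(1)$ the graph is not fully blue after $\ell$ rounds, giving $\Pk\ell(K_n)=o(1)$.

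The anticipated obstacle is precisely the small-$b$ regime that the original argument avoids; this is why I prefer the Markov-based bound, which loses only a logarithmic factor per round but retains an $o(1)$ union-bound tail over all $\ell$ rounds. The doubling of the exponent in the bound for $\log b_\ell$ is handled comfortably by the doubly-logarithmic target: even squaring each round would not inflate $b_\ell$ past $n$ until $\ell=\Omega(\log\log n)$.
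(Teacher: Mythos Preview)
Your first bound is correct and matches the paper's one-line citation of Corollary~\ref{cor:pstar}. For the second bound, the paper simply refers to ``the method used in the proof of Proposition~\ref{klower}'': Chebyshev's inequality gives per-round failure probability $O(1/b^2)$, and by restricting to rounds with $b\ge\log n$ this becomes $O((\log n)^{-2})$, small enough to union-bound over the $\Theta(\log\log n)$ rounds needed for $b$ to reach $n$. Your Markov-from-round-one approach is a legitimate alternative that sidesteps having to control the first passage above $\log n$, and your recursion analysis $\log b_\ell=O(2^\ell\log\ell)=o(\log n)$ is correct.

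There is, however, a real gap in the probability bound. You take $K=\ell^2$ and assert that the union-bound tail $1/\ell$ is $o(1)$; but the hypothesis $\ell=o(\log\log n)$ does not force $\ell\to\infty$. If $\ell$ stays bounded (e.g.\ $\ell(n)\equiv 5$), then $1/\ell$ is a fixed positive constant and your argument does not deliver $\Pk\ell(K_n)=o(1)$ in that regime. The repair is to make the Markov threshold depend on $n$ rather than on $\ell$: take, for instance, $K=\log\log n$. Then the per-round failure probability is $1/\log\log n$, the union bound over $\ell=o(\log\log n)$ rounds is $o(1)$ whether or not $\ell\to\infty$, and the recursion still gives $\log b_\ell\le(2^\ell-1)\log(2\log\log n)=(\log n)^{o(1)}\cdot O(\log\log\log n)=o(\log n)$, so $b_\ell<n$ on the good event for all large $n$.
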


\begin{proof}
The first bound follows from Corollary \ref{cor:pstar}, while the second bound is proved by the method used in the proof of Proposition \ref{klower}.
\end{proof}

\begin{cor}
$\Pk\ell(S) = 1-o(1)$ for $\ell = \omega(\rad(S) + \log{k})$ for every spider $S$ with $k$ legs.
\end{cor}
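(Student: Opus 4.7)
The plan is to combine Proposition~\ref{prop:spiderep} with Lemma~\ref{epttoell0} essentially directly, much as in the two preceding corollaries for $K_{1,n}$ and $K_n$. By Proposition~\ref{prop:spiderep}, for a spider $S$ with $k$ legs we have
\[
\ept(S) \;=\; \rad(S) + O(\log k) \;=\; O\!\left(\rad(S)+\log k\right),
\]
so setting $f = \rad(S)+\log k$ gives a bound on $\ept(S)$ of the form $O(f)$.

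Next I would invoke Lemma~\ref{epttoell0} with this $f$. The lemma is stated with a function $f(n)$ of the order, but its proof is just Markov's inequality applied to $\ptpf(S,\{v\})$ through Lemma~\ref{epttoell}, and that argument works verbatim when $f$ is any quantity associated with the graph that tends to infinity with the family under consideration. Concretely: for $\ell = \omega(\rad(S)+\log k)$, Lemma~\ref{epttoell} gives
\[
\Pk\ell(S) \;\ge\; 1 - \frac{\ept(S)}{\ell} \;\ge\; 1 - \frac{C(\rad(S)+\log k)}{\ell},
\]
and the ratio on the right tends to $0$ by the definition of $\omega(\cdot)$. Hence $\Pk\ell(S) = 1 - o(1)$, as claimed.

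The only thing to be mindful of is the interpretation of $o(1)$ and $\omega(\cdot)$ for a family of spiders: we let either $\rad(S)$ or $k$ (or both) grow without bound, and the estimate above is uniform in the specific spider because the implicit constant in Proposition~\ref{prop:spiderep} does not depend on $\rad(S)$ or $k$. There is no real obstacle; the corollary is a one-line consequence of Proposition~\ref{prop:spiderep} and the Markov-inequality argument already encapsulated in Lemmas~\ref{epttoell} and \ref{epttoell0}.
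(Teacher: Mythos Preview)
Your proposal is correct and matches the paper's intended argument: the corollary is stated without proof precisely because it follows immediately from Proposition~\ref{prop:spiderep} together with Lemma~\ref{epttoell0} (equivalently, Lemma~\ref{epttoell} via Markov's inequality), exactly as you outline. Your remark that the parameter $f$ need not literally be a function of $n$ is a fair clarification, but the underlying approach is identical.
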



\begin{cor}
$\Pk\ell(T) = 1-o(1)$ for $\ell = \omega((\log{n})^2)$ for every full $k$-ary tree $T$.
\end{cor}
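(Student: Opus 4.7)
The plan is to derive this corollary directly from the two results it has been positioned after, namely Proposition \ref{prop:k-ary} and Lemma \ref{epttoell0}. First I would recall from Proposition \ref{prop:k-ary} that if $T = T_{k,h}$ is a full $k$-ary tree of order $n$, then $\ept(T) = O((\log n)^2)$, where the implicit constant depends only on the (fixed) branching factor $k$. This gives us exactly the hypothesis $\ept(T) = O(f(n))$ with $f(n) = (\log n)^2$ that is required to trigger Lemma \ref{epttoell0}.

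Second, I would invoke Lemma \ref{epttoell0}, which asserts that for any connected graph $G$, the bound $\ept(G) = O(f(n))$ implies $\Pk\ell(G) = 1 - o(1)$ whenever $\ell = \omega(f(n))$. Applying this with $G = T$ and $f(n) = (\log n)^2$, we conclude that $\Pk\ell(T) = 1 - o(1)$ whenever $\ell = \omega((\log n)^2)$, which is exactly the desired statement.

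There is essentially no obstacle, since all the work has been done upstream: Proposition \ref{prop:k-ary} carries the combinatorial/probabilistic content (it was proved via Lemma \ref{lem:starlogdist} together with a level-by-level propagation argument on the tree), and Lemma \ref{epttoell0} packages the translation from expected-propagation bounds to high-probability $\ell$-round bounds via Markov's inequality. The only minor point worth noting explicitly in the proof is that the constant in the $O((\log n)^2)$ bound depends on $k$, but since $k$ is fixed in the statement of the corollary, this constant is absorbed harmlessly into the asymptotic hypothesis $\ell = \omega((\log n)^2)$. Thus the proof is a two-line deduction that cites Proposition \ref{prop:k-ary} and then Lemma \ref{epttoell0}.
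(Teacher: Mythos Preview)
Your proposal is correct and matches the paper's approach exactly: the corollary is stated without an explicit proof in the paper, as one of several immediate consequences of combining Lemma \ref{epttoell0} with the corresponding expected propagation time bound (here Proposition \ref{prop:k-ary}). Your observation about the $k$-dependent constant being absorbed into the $\omega((\log n)^2)$ hypothesis is also apt.
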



\begin{cor}
With high probability, $\Pk\ell(G(n, p)) = 1-o(1)$ for $\ell = \omega( (\log{n})^2)$ for fixed $0 < p < 1$.
\end{cor}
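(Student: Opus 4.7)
The plan is to combine two earlier results: Corollary \ref{cor:randept}, which guarantees that with high probability a random graph $G(n,p)$ with fixed $0<p<1$ satisfies $\ept(G(n,p)) = O((\log n)^2)$, and Lemma \ref{epttoell0}, which converts a bound of the form $\ept(G) = O(f(n))$ into $\Pk\ell(G) = 1-o(1)$ for any $\ell = \omega(f(n))$.

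First I would condition on the high-probability event $\mathcal{E}$ that $\ept(G(n,p)) \le C (\log n)^2$ for some fixed constant $C$; by Corollary \ref{cor:randept} we have $\pr(\mathcal{E}) = 1-o(1)$. On this event, $f(n) = (\log n)^2$ is a valid upper bound on the expected propagation time, so Lemma \ref{epttoell0} applies directly and yields $\Pk\ell(G(n,p)) = 1-o(1)$ whenever $\ell = \omega((\log n)^2)$. Since the failure event has probability $o(1)$, the conclusion holds with high probability.

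The only subtlety is ensuring that the asymptotic notation in Lemma \ref{epttoell0} is being applied to a deterministic sequence of graphs, while here $G(n,p)$ is random; but this is handled by conditioning on $\mathcal{E}$ as above, since on $\mathcal{E}$ the bound on $\ept$ holds uniformly and the lemma then gives the desired bound on $\Pk\ell$ as a deterministic consequence. There is no real obstacle here — the corollary is essentially an immediate composition of the two prior results, packaged with a conditioning argument to pass from a high-probability bound on $\ept$ to a high-probability bound on $\Pk\ell$.
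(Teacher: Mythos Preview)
Your proposal is correct and matches the paper's approach exactly: the corollary is stated without proof in the paper, implicitly as an immediate consequence of Corollary \ref{cor:randept} and Lemma \ref{epttoell0}. Your added remark about conditioning on the high-probability event $\mathcal{E}$ is a reasonable way to make the passage from ``with high probability $\ept(G(n,p)) = O((\log n)^2)$'' to ``with high probability $\Pk\ell(G(n,p)) = 1-o(1)$'' precise, and does not represent a departure from the paper's intended argument.
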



\begin{cor}
$\Pk\ell(G) = 0$ for $\ell < rad(G)$ and $\Pk\ell(G) = 1-o(1)$ for all $\ell = \omega(\rad(G) (\log{n})^2)$ for every connected graph $G$.
\end{cor}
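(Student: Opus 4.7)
The plan is to handle the two claims separately; both follow quickly from results already in the paper.

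For the lower threshold $\Pk\ell(G) = 0$ when $\ell < \rad(G)$, I would argue by the trivial distance bound. Starting from a single blue vertex $v$, the probabilistic color change rule can only change the color of a white vertex that is adjacent to a currently blue vertex. So by induction on $\ell$, after $\ell$ rounds every blue vertex lies in the closed ball of radius $\ell$ around $v$. Hence if $\ell < \operatorname{ecc}(v)$, there is a vertex that cannot yet be blue, and $\Pk\ell_{\{v\}}(G) = 0$. Since $\Pk\ell(G)$ is the maximum of $\Pk\ell_{\{v\}}(G)$ over single-vertex sets, and the maximum is achieved by choosing $v$ to minimize $\operatorname{ecc}(v)$, we get $\Pk\ell(G) = 0$ whenever $\ell < \min_v \operatorname{ecc}(v) = \rad(G)$.

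For the upper claim $\Pk\ell(G) = 1 - o(1)$ when $\ell = \omega(\rad(G)(\log n)^2)$, the plan is to feed Theorem \ref{t:allgraphbds} into Lemma \ref{epttoell0}. Theorem \ref{t:allgraphbds} already gives $\ept(G) = O(\rad(G)(\log n)^2)$ for every connected graph $G$, so setting $f(n) = \rad(G)(\log n)^2$ in Lemma \ref{epttoell0} yields the conclusion directly.

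There is no genuine obstacle here: the corollary is essentially a reformulation of Theorem \ref{t:allgraphbds} in the $\ell$-round language, together with the elementary observation that information cannot travel faster than one edge per round. The proof should therefore be extremely short—a sentence for each direction, with pointers to the two earlier results.
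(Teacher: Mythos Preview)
Your proposal is correct and matches the paper's own proof essentially line for line: the paper dispatches the first claim with the same distance/eccentricity observation (stated in a single sentence) and derives the second claim by combining Theorem~\ref{t:allgraphbds} with Lemma~\ref{epttoell0}. The only minor quibble is the phrase ``the maximum is achieved by choosing $v$ to minimize $\operatorname{ecc}(v)$,'' which is unnecessary---once you know $\Pk\ell_{\{v\}}(G)=0$ for \emph{every} $v$ when $\ell<\rad(G)$, the maximum is automatically $0$.
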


\begin{proof}
The first statement is true since the number of steps in the coloring process cannot be less than $\rad(G)$ if we start with only one blue vertex. The second statement follows from  Lemma \ref{epttoell0} and Theorem \ref{t:allgraphbds}.
\end{proof}


\section{Confidence propagation time}\label{s:alpha}

Define $\ptpf(G,Z,\alpha)$ to be the least number of rounds $t$ such that the probability that all the vertices are blue after round  $t$ is greater than or equal to $\alpha$, assuming that the vertices in $Z$ are colored initially.  This can be thought of as the the time at which you have $alpha$-confidence that the graph is all blue when staring with $Z$, and is called the {\em $\alpha$-confidence propagation time}.
Define $\displaystyle \ptpf(G,\alpha)=\min_{v\in V(G)} \ptpf(G,\{v\},\alpha)$.

Confidence propagation time can be determined immediately from $\ell$-round probability when this is known, as for cycles and paths (Propositions \ref{p:ell-round-cycle} and \ref{p:ell-round-path}).

\begin{cor}
For a cycle of order $n$, 
\[\ptpf(C_{n}, \alpha)  = \begin{cases} 
\frac{n}{2}+\ell & \mbox{if $n$ is even and }1-(\frac{1}{4})^{\ell} < \alpha < 1-(\frac{1}{4})^{\ell+1}\\
\frac{n-1}{2} &  \mbox{if $n$ is odd and } 0 < \alpha \leq \frac 1 4\\
\frac{n-1}{2}+\ell &  \mbox{if $n$ is odd and } 1-\frac{3}{4}(\frac{1}{4})^{\ell-1} < \alpha \leq 1-\frac{3}{4}(\frac{1}{4})^{\ell}
\end{cases}.\]
\end{cor}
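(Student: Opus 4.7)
The plan is to invoke Proposition \ref{p:ell-round-cycle} directly: by definition $\ptpf(C_n,\alpha)$ is the least $t$ with $\Pk{t}(C_n)\ge\alpha$, and $\Pk{t}(C_n)$ is nondecreasing in $t$ (adding more rounds can only increase the probability that every vertex is blue). So the proof reduces to inverting the two explicit formulas in Proposition \ref{p:ell-round-cycle} in each parity case.

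First I would handle the even case. Proposition \ref{p:ell-round-cycle} gives $\Pk{t}(C_n)=1-(1/4)^{t-n/2+1}$ for $t\ge n/2$ (and $0$ for smaller $t$). Writing $t=n/2+\ell$ for $\ell\ge 0$, this is $1-(1/4)^{\ell+1}$, which is strictly increasing in $\ell$ and takes values in $[3/4,1)$. Hence if $1-(1/4)^\ell<\alpha<1-(1/4)^{\ell+1}$, the smallest $t$ satisfying $\Pk{t}(C_n)\ge\alpha$ is exactly $n/2+\ell$, matching the first case of the statement.

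The odd case is analogous but splits naturally in two. Proposition \ref{p:ell-round-cycle} gives $\Pk{t}(C_n)=1-\tfrac{3}{4}(1/4)^{t-(n-1)/2}$ for $t\ge (n-1)/2$. At $t=(n-1)/2$ this equals $1/4$ (the probability that two forces occur in the first firing round), so any $\alpha\in(0,1/4]$ is already achieved at round $(n-1)/2$; this explains the middle case. For $\alpha>1/4$, set $t=(n-1)/2+\ell$ with $\ell\ge 1$, so $\Pk{t}(C_n)=1-\tfrac{3}{4}(1/4)^\ell$. Then if $1-\tfrac{3}{4}(1/4)^{\ell-1}<\alpha\le 1-\tfrac{3}{4}(1/4)^\ell$, the least $t$ meeting the threshold is $(n-1)/2+\ell$, giving the third case.

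There is no real obstacle here: the corollary is essentially a change of variables on the formulas of Proposition \ref{p:ell-round-cycle}, and the strict monotonicity of $\Pk{t}(C_n)$ in $t$ on the relevant ranges guarantees that the inversion is unambiguous. The only mild care needed is matching the open/closed endpoints stated for the intervals of $\alpha$, which follow automatically from whether $\alpha$ equals one of the discrete probability values $1-(1/4)^{\ell+1}$ or $1-\tfrac{3}{4}(1/4)^\ell$ attained at an integer $t$.
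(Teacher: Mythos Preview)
Your proposal is correct and takes essentially the same approach as the paper, which states that the corollary follows immediately from Proposition \ref{p:ell-round-cycle} without giving further details. Your explicit inversion of the formulas, with the substitution $t=\lfloor n/2\rfloor+\ell$ and the monotonicity observation, is exactly the intended argument.
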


\begin{cor}
For a path of order $n$, 
\[\ptpf(P_{n}, \alpha)  = \begin{cases} 
\frac{n}{2} &  \mbox{if $n$ is even and } 0 < \alpha \leq \frac 1 2\\
\frac{n}{2}+\ell &  \mbox{if $n$ is even and } 1-\frac{1}{2}(\frac{1}{4})^{\ell-1} < \alpha \leq 1-\frac{1}{2}(\frac{1}{4})^{\ell}\\
\frac{n-1}{2} &  \mbox{if $n$ is odd and } 0 < \alpha \leq \frac 1 4\\
\frac{n-1}{2}+\ell &  \mbox{if $n$ is odd and } 1-\frac{3}{4}(\frac{1}{4})^{\ell-1} < \alpha \leq 1-\frac{3}{4}(\frac{1}{4})^{\ell}
\end{cases}.\]
\end{cor}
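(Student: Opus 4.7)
The plan is to derive this corollary directly from Proposition \ref{p:ell-round-path}, which gives an explicit formula for $\Pk\ell(P_n)$. Since by definition $\ptpf(P_n,\alpha)$ is the least $t$ such that $\Pk t(P_n)\ge \alpha$, and $\Pk\ell(P_n)$ is nondecreasing in $\ell$, the task reduces to inverting the formula in each parity case: find the smallest $\ell$ for which the expression given in Proposition \ref{p:ell-round-path} reaches $\alpha$.

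First I would handle the odd case, which mirrors the cycle case already treated in the preceding corollary. Proposition \ref{p:ell-round-path} gives $\Pk\ell(P_n)=0$ for $\ell<\tfrac{n-1}{2}$ and $\Pk\ell(P_n)=1-\tfrac 3 4\bigl(\tfrac 1 4\bigr)^{\ell-(n-1)/2}$ for $\ell\ge \tfrac{n-1}{2}$. Setting $\ell=\tfrac{n-1}{2}$ gives $\tfrac 1 4$, so the threshold $\ptpf(P_n,\alpha)=\tfrac{n-1}{2}$ covers exactly $0<\alpha\le \tfrac 1 4$. For $\alpha>\tfrac 1 4$, I would solve $1-\tfrac 3 4\bigl(\tfrac 1 4\bigr)^{m-1}<\alpha\le 1-\tfrac 3 4\bigl(\tfrac 1 4\bigr)^{m}$ for the least $m\ge 1$; setting $\ell=m$, the first round on which the success probability crosses into $(1-\tfrac 3 4(\tfrac 1 4)^{\ell-1},\,1-\tfrac 3 4(\tfrac 1 4)^{\ell}]$ is round $\tfrac{n-1}{2}+\ell$, which matches the claimed formula.

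Next I would handle the even case analogously. Proposition \ref{p:ell-round-path} gives $\Pk\ell(P_n)=0$ for $\ell<\tfrac n 2$ and $\Pk\ell(P_n)=1-\tfrac 1 2\bigl(\tfrac 1 4\bigr)^{\ell-n/2}$ for $\ell\ge \tfrac n 2$. At $\ell=\tfrac n 2$ this equals $\tfrac 1 2$, so $\ptpf(P_n,\alpha)=\tfrac n 2$ exactly when $0<\alpha\le \tfrac 1 2$. For $\alpha>\tfrac 1 2$, the least $\ell\ge 1$ with $\Pk{n/2+\ell}(P_n)\ge \alpha$ is the one satisfying $1-\tfrac 1 2(\tfrac 1 4)^{\ell-1}<\alpha\le 1-\tfrac 1 2(\tfrac 1 4)^{\ell}$, giving $\ptpf(P_n,\alpha)=\tfrac n 2+\ell$.

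The argument is essentially bookkeeping, so I do not anticipate a genuine obstacle; the only mild subtlety is confirming that $\Pk\ell(P_n)$ is monotone nondecreasing in $\ell$ (so that the least $t$ with $\Pk t\ge\alpha$ is well defined and equals the value obtained by inverting the formula), which is immediate since once all vertices are blue after round $\ell$ they remain blue after round $\ell+1$. With monotonicity in hand, the four cases in the piecewise formula follow by direct comparison with the two closed-form expressions from Proposition \ref{p:ell-round-path}.
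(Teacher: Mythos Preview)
Your proposal is correct and matches the paper's approach exactly: the paper states that confidence propagation time can be determined immediately from the $\ell$-round probability formulas in Proposition~\ref{p:ell-round-path}, and you carry out precisely that inversion. The only detail you add beyond the paper is the explicit monotonicity justification, which is indeed immediate.
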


The next lemma is analogous to Lemma \ref{epttoell} in the last section.

\begin{lem}\label{epttoalpha}
If $G$ is a connected graph, then $\ptpf(G,\alpha) \leq \frac{\ept(G)}{1-\alpha}$.
\end{lem}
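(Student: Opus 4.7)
The plan is to apply Markov's inequality directly to the random variable $\ptpf(G,\{v^*\})$, where $v^*$ is a one-vertex starting set achieving the minimum in the definition of $\ept(G)$. Let $X=\ptpf(G,\{v^*\})$, so $\e[X]=\ept(G,\{v^*\})=\ept(G)$, and $X$ is a nonnegative (in fact positive integer-valued) random variable.

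Set $a=\frac{\ept(G)}{1-\alpha}$. By Markov's inequality applied to $X$,
\[
\pr(X\ge a)\le \frac{\e[X]}{a}=\frac{\ept(G)}{a}=1-\alpha,
\]
which rearranges to $\pr(X<a)\ge\alpha$. Since $X$ takes integer values, $\pr(X<a)=\pr(X\le\lfloor a\rfloor)$ whenever $a\notin\ZZ$, and $\pr(X<a)=\pr(X\le a-1)$ whenever $a\in\ZZ$; in either case $\pr(X\le\lfloor a\rfloor)\ge\alpha$.

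By the definition of $\ptpf(G,\{v^*\},\alpha)$ as the least round $t$ with $\pr(X\le t)\ge\alpha$, this gives $\ptpf(G,\{v^*\},\alpha)\le\lfloor a\rfloor\le a$. Finally, since $\ptpf(G,\alpha)=\min_{v\in V(G)}\ptpf(G,\{v\},\alpha)\le\ptpf(G,\{v^*\},\alpha)$, we conclude $\ptpf(G,\alpha)\le\frac{\ept(G)}{1-\alpha}$, as desired.

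There is essentially no obstacle here beyond bookkeeping: the only mild subtlety is the conversion from the strict inequality $X<a$ produced by Markov to the weak inequality $X\le t$ appearing in the definition of $\ptpf$, which is handled by noting that $X$ is integer-valued. The proof mirrors that of Lemma \ref{epttoell} in the previous section, which converts an expected-value bound into an $\ell$-round probability bound via the same use of Markov's inequality.
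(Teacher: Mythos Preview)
Your proof is correct and takes essentially the same approach as the paper: apply Markov's inequality to the propagation-time random variable at the optimal starting vertex, with $T=\frac{\ept(G)}{1-\alpha}$. You are simply more explicit than the paper about choosing $v^*$ and about converting the strict inequality $X<a$ into $X\le\lfloor a\rfloor$ via integrality; the paper's two-line argument leaves these details implicit.
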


\begin{proof}
By Markov's inequality, the probability that $G$ is not all blue by time $T$ is at most $\frac{\ept(G)}{T}$. When $T = \frac{\ept(G)}{1-\alpha}$, this probability is at most $1-\alpha$.
\end{proof}

The upper bounds for the next few corollaries follow from Lemma \ref{epttoalpha}. The proofs for the two lower bounds use the same method as in the proofs of Theorem \ref{starlower} and Proposition \ref{klower}.

\begin{cor}\label{cor:alphastar}
For every constant $0 < \alpha < 1$, $\ptpf(G,\alpha) =O(\log{n})$ for every graph $G$ with a universal vertex.
\end{cor}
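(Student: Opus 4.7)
The plan is to deduce this corollary directly by combining Lemma \ref{epttoalpha} with the earlier bound on expected propagation time for graphs with a universal vertex. Concretely, Corollary \ref{c:univers} tells us that if $G$ has order $n$ and contains a universal vertex, then $\ept(G) = O(\log n)$, so there is a constant $C > 0$ with $\ept(G) \le C \log n$ for all sufficiently large $n$.

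Given $0 < \alpha < 1$, I would then invoke Lemma \ref{epttoalpha}, which asserts
\[
\ptpf(G,\alpha) \le \frac{\ept(G)}{1-\alpha}.
\]
Substituting the bound from Corollary \ref{c:univers} yields
\[
\ptpf(G,\alpha) \le \frac{C \log n}{1-\alpha}.
\]
Since $\alpha$ is a fixed constant, the factor $\frac{C}{1-\alpha}$ is a constant, so $\ptpf(G,\alpha) = O(\log n)$.

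There is no real obstacle here; the corollary is essentially a one-line consequence of stitching together the Markov-type inequality of Lemma \ref{epttoalpha} with the logarithmic expected-time bound of Corollary \ref{c:univers}. The only mild point to keep in mind is that the implicit constant in the $O(\log n)$ conclusion depends on $\alpha$ (through the factor $1/(1-\alpha)$), which is consistent with the statement's quantification over $\alpha$ as a fixed constant rather than uniformly.
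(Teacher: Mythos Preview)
Your proposal is correct and matches the paper's approach exactly: the paper states just before this corollary that the upper bounds follow from Lemma \ref{epttoalpha}, which combined with Corollary \ref{c:univers} gives the result precisely as you outline.
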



\begin{cor}
For every constant $0 < \alpha < 1$, $\ptpf(K_{1, n}, \alpha) = \Theta(\log{n})$.
\end{cor}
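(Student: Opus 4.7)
The plan is to handle the upper and lower bound separately, both leveraging prior results in the paper. For the upper bound, I will simply combine Theorem \ref{starlower}, which states $\ept(K_{1,n}) = \Theta(\log n)$, with Lemma \ref{epttoalpha}. Since $\alpha$ is a fixed constant in $(0,1)$, the quantity $\frac{1}{1-\alpha}$ is a constant, so Lemma \ref{epttoalpha} immediately gives
\[\ptpf(K_{1,n},\alpha) \le \frac{\ept(K_{1,n})}{1-\alpha} = O(\log n).\]

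For the lower bound, I will adapt the argument from the proof of Theorem \ref{starlower}. The key consequence of that proof is a high-probability concentration statement: using the random variables $X_i$ and Chebyshev's inequality as in that proof, one obtains that with probability at least $\bigl(1-O(1/\sqrt{n})\bigr)^r$ the number of blue vertices after $r$ rounds (starting from $b\le \sqrt n$ blue leaves with the center blue) is at most $5^r b$. Taking $r=\log_5(\sqrt n/2)$, this implies that the probability the process completes in fewer than $r$ rounds is $o(1)$. The small nuisance of the initial blue vertex being a leaf rather than the center is handled exactly as in Theorem \ref{starlower}, at the cost of a single extra deterministic round.

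To convert this into a statement about confidence propagation time, I will translate ``the process takes at least $r$ rounds'' into ``not all vertices are blue by round $r-1$''. Concretely, there exists a constant $c > 0$ such that, writing $r = \lfloor c\log n\rfloor$, we have
\[\pr[\text{all vertices blue by round }r-1] \le o(1).\]
Since $\alpha \in (0,1)$ is a fixed constant, for all sufficiently large $n$ the right-hand side is strictly less than $\alpha$. By the definition of $\ptpf(K_{1,n},\alpha)$, this forces $\ptpf(K_{1,n},\alpha) \ge r = \Omega(\log n)$.

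The main obstacle, such as it is, is not any new technical difficulty but rather carefully transferring a ``with high probability'' statement about the random variable $\ptpf(K_{1,n})$ into a statement about the deterministic quantity $\ptpf(K_{1,n},\alpha)$. Once one observes that a $o(1)$ probability of early completion is eventually below any fixed positive $\alpha$, the lower bound follows, and combined with the upper bound from Lemma \ref{epttoalpha} we get $\ptpf(K_{1,n},\alpha) = \Theta(\log n)$.
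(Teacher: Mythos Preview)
Your proposal is correct and follows essentially the same approach as the paper: the upper bound via Lemma \ref{epttoalpha} combined with $\ept(K_{1,n})=\Theta(\log n)$, and the lower bound by reusing the high-probability argument from the proof of Theorem \ref{starlower}. Your added paragraph explaining how a $o(1)$ probability of early completion translates into $\ptpf(K_{1,n},\alpha)\ge \Omega(\log n)$ for any fixed $\alpha$ makes explicit what the paper leaves implicit, but the underlying method is the same.
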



\begin{cor}
For every constant $0 < \alpha < 1$, $\ptpf(K_{n}, \alpha) = O(\log{n})$ and $\ptpf(K_{n}, \alpha) = \Omega(\log{\log{n}})$.
\end{cor}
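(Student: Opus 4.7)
The plan is to derive both bounds directly from results already established for $\ept(K_n)$ and from the high-probability analysis in the proof of Proposition \ref{klower}.

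For the upper bound $\ptpf(K_n,\alpha)=O(\log n)$, I would simply combine Lemma \ref{epttoalpha} with Corollary \ref{c:Kn}. Since $\alpha\in(0,1)$ is a fixed constant, Lemma \ref{epttoalpha} gives
\[
\ptpf(K_n,\alpha)\ \le\ \frac{\ept(K_n)}{1-\alpha}\ \le\ \frac{C\log n}{1-\alpha}\ =\ O(\log n),
\]
which is essentially the same shape of argument used for Corollary \ref{cor:alphastar}. No new analysis is required for this direction.

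For the lower bound $\ptpf(K_n,\alpha)=\Omega(\log\log n)$, I would import the estimate in the proof of Proposition \ref{klower} verbatim. Starting from a single blue vertex, that argument shows that with probability at least $1-o(1)$ the number of blue vertices stays at most $(\log n)^{(3^r)}$ after $r$ rounds, so the graph cannot be fully blue before round $r^*:=\log_3\!\bigl(\tfrac{\log n}{\log\log n}\bigr)=\Omega(\log\log n)$. In terms of $\ell$-round probability, this says $\Pk{r^*}(K_n)\le o(1)$. Since $\alpha<1$ is a fixed constant, for $n$ sufficiently large we have $o(1)<\alpha$, and hence by the very definition of $\alpha$-confidence propagation time, $\ptpf(K_n,\alpha)>r^*=\Omega(\log\log n)$.

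The only (minor) subtlety is handling the ``start from one blue vertex'' assumption in $\ptpf(G,\alpha)=\min_v \ptpf(G,\{v\},\alpha)$: by vertex-transitivity of $K_n$, the value $\ptpf(K_n,\{v\},\alpha)$ is the same for every $v$, so the minimum is just that common value, and the bound above applies to it. I expect no serious obstacle here; the real work was already done in Proposition \ref{klower}, and the present corollary is just the translation of a high-probability statement into a confidence-level statement, exactly as Lemma \ref{epttoalpha} plays the analogous role for expectations.
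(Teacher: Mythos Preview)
Your proposal is correct and matches the paper's own approach: the text preceding this corollary states that the upper bound follows from Lemma \ref{epttoalpha} and the lower bound uses the same method as in the proof of Proposition \ref{klower}, which is exactly what you do. Your remark about vertex-transitivity is a tidy justification that the minimum over $v$ does not change the picture, though the argument in Proposition \ref{klower} is already uniform over the choice of initial vertex.
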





\begin{cor}
For every constant $0 < \alpha < 1$, $\ptpf(T,\alpha) = O((\log{n})^2)$ for every full $k$-ary tree $T$, where the constant in the bound depends on $k$.
\end{cor}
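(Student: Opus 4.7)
The plan is to combine the expected propagation time bound for full $k$-ary trees (Proposition \ref{prop:k-ary}) with the general Markov-inequality-based conversion from expected propagation time to confidence propagation time (Lemma \ref{epttoalpha}).

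First I would invoke Proposition \ref{prop:k-ary}, which gives $\ept(T) = O((\log n)^2)$ for any full $k$-ary tree $T$ of order $n$, with the implicit constant depending on $k$. Next I would apply Lemma \ref{epttoalpha} to conclude
\[\ptpf(T,\alpha) \le \frac{\ept(T)}{1-\alpha}.\]
Since $\alpha$ is a fixed constant in $(0,1)$, the factor $\frac{1}{1-\alpha}$ is itself a constant, so substituting the bound from Proposition \ref{prop:k-ary} yields
\[\ptpf(T,\alpha) \le \frac{1}{1-\alpha} \cdot O((\log n)^2) = O((\log n)^2),\]
where the hidden constant depends on both $k$ and $\alpha$ (and hence on $k$ once $\alpha$ is fixed).

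There is no real obstacle here; this corollary is a two-line consequence of a previously established upper bound on $\ept$ combined with Markov's inequality. The only thing to verify is that the constants behave correctly, namely that absorbing $\frac{1}{1-\alpha}$ into the $O$-notation is legitimate precisely because $\alpha$ is a constant independent of $n$.
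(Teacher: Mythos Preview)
Your proposal is correct and follows exactly the paper's approach: the paper explicitly states that the upper bounds for this and the surrounding corollaries follow from Lemma~\ref{epttoalpha}, which combined with the $O((\log n)^2)$ bound on $\ept(T)$ from Proposition~\ref{prop:k-ary} gives the result.
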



\begin{cor}
For every constant $0 < \alpha < 1$, $\rad(G) \leq \ptpf(G,\alpha) = O(\rad(G) (\log{n})^2)$ for every connected graph $G$.
\end{cor}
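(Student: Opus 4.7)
The plan is to combine the two previous tools in a straightforward way: the lower bound is a deterministic reachability argument identical in spirit to the lower bound in Theorem \ref{t:allgraphbds}, and the upper bound follows by chaining Lemma \ref{epttoalpha} with Theorem \ref{t:allgraphbds}.

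For the lower bound $\rad(G) \le \ptpf(G,\alpha)$, I would observe that if the process starts with a single blue vertex $v$, then after $t$ rounds every blue vertex must lie within graph distance $t$ of $v$ (this holds with probability one since no force can traverse more than one edge per round). For any $v$, the eccentricity $\ecc(v) \ge \rad(G)$, so there exists a vertex $w$ with $\dist(v,w) \ge \rad(G)$; after $\rad(G)-1$ rounds $w$ is white with probability one. Thus $\pr(\text{all vertices blue after round } \rad(G)-1)=0 < \alpha$, giving $\ptpf(G,\{v\},\alpha) \ge \rad(G)$ for every starting vertex $v$, and taking the minimum yields $\ptpf(G,\alpha) \ge \rad(G)$.

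For the upper bound, Lemma \ref{epttoalpha} gives
\[
\ptpf(G,\alpha) \le \frac{\ept(G)}{1-\alpha},
\]
and Theorem \ref{t:allgraphbds} states $\ept(G) = O(\rad(G)(\log n)^2)$. Since $\alpha$ is a fixed constant in $(0,1)$, the factor $\frac{1}{1-\alpha}$ is absorbed into the $O(\cdot)$ notation, yielding $\ptpf(G,\alpha) = O(\rad(G)(\log n)^2)$.

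There is no real obstacle here: the lower bound is a one-line reachability observation (essentially restating the argument already used for $\ept$), and the upper bound is a direct application of Markov's inequality in the guise of Lemma \ref{epttoalpha}, followed by the asymptotic bound from Theorem \ref{t:allgraphbds}. The only subtlety worth noting is that the constant hidden in the $O$ now depends on $\alpha$ (through $\frac{1}{1-\alpha}$), but this is consistent with the statement, which fixes $\alpha$ before passing to asymptotics in $n$.
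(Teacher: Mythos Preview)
Your proposal is correct and follows essentially the same approach as the paper: the lower bound is the same deterministic reachability argument used in Theorem \ref{t:allgraphbds}, and the upper bound is exactly the intended chaining of Lemma \ref{epttoalpha} with the $O(\rad(G)(\log n)^2)$ bound from Theorem \ref{t:allgraphbds}, with the $\alpha$-dependent constant absorbed into the big-$O$.
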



\begin{cor}
For every constant $0 < \alpha < 1$, with high probability $\ptpf(G(n, p), \alpha) = O((\log{n})^2)$ for all fixed $0 < p < 1$.
\end{cor}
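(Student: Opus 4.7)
The plan is to combine Lemma \ref{epttoalpha} with Corollary \ref{cor:randept}. First I would recall that Corollary \ref{cor:randept} gives, for fixed $0<p<1$, that $\ept(G(n,p)) = O((\log n)^2)$ with high probability; this was itself derived from the fact that $G(n,p)$ has diameter (and hence radius) at most $2$ w.h.p.\ and from the general bound in Theorem \ref{t:allgraphbds}. So on the event that the random graph satisfies this $O((\log n)^2)$ bound on expected propagation time, I have a deterministic (in the graph) bound to feed into a Markov-type conversion.

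Next, I would apply Lemma \ref{epttoalpha}, which says that for any connected graph $G$ and any $0<\alpha<1$,
\[\ptpf(G,\alpha) \le \frac{\ept(G)}{1-\alpha}.\]
Since $\alpha$ is a fixed constant, $\tfrac{1}{1-\alpha}$ is a constant factor, so substituting the high-probability bound on $\ept(G(n,p))$ gives $\ptpf(G(n,p),\alpha) = O((\log n)^2)$ on the same high-probability event. Finally, I would note that connectedness of $G(n,p)$ for fixed $p$ also holds w.h.p., so the hypotheses of Lemma \ref{epttoalpha} are satisfied on the intersection of two w.h.p.\ events, which is still a w.h.p.\ event.

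There is no real obstacle here: the work was done in establishing Corollary \ref{cor:randept} and Lemma \ref{epttoalpha}. The only point requiring (minor) care is the quantifier order — the bound from Corollary \ref{cor:randept} is a random bound on $\ept(G(n,p))$, but because Lemma \ref{epttoalpha} holds graph-by-graph, we may first condition on a graph satisfying $\ept(G(n,p)) = O((\log n)^2)$ and then invoke the lemma to conclude $\ptpf(G(n,p),\alpha) = O((\log n)^2)$ with high probability.
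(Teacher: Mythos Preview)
Your proposal is correct and matches the paper's approach: the paper states that the upper bounds for this block of corollaries follow directly from Lemma \ref{epttoalpha} applied to the corresponding expected propagation time results, and in this case that means exactly combining Lemma \ref{epttoalpha} with Corollary \ref{cor:randept}. Your remarks about connectedness holding w.h.p.\ and the constant factor $\frac{1}{1-\alpha}$ are the right small observations to make the deduction airtight.
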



For spiders, we obtain a tighter bound than what is given by Lemma \ref{epttoalpha}. This follows from a proof very similar to Proposition \ref{prop:spiderep}, using Markov's inequality on the sum of the times for the first force to occur and for all neighbors of the body vertex to be colored after the body vertex is colored.

\begin{prop}
If $G$ is a spider with $k$ legs, then $\ptpf(G, \alpha) = \rad(G) + O(\log{k})$, where the constant in the $O(\log{k})$ depends on $\alpha$.
\end{prop}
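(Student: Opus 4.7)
The plan is to closely follow the argument of Proposition \ref{prop:spiderep} but to replace the expectation bound at the end with a Markov-inequality tail bound, as the machinery of Lemma \ref{epttoalpha} suggests. The lower bound $\rad(G) \le \ptpf(G,\alpha)$ is immediate: starting from a single blue vertex, any vertex at distance $\rad(G)$ requires at least $\rad(G)$ rounds to be colored with probability $1$, hence also with probability at least $\alpha$.

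For the upper bound, initialize a center $v$ of $G$ blue, and let $u$ denote the body vertex, $a=\dist(v,u)$, and $\ell_1 \ge \ell_2 \ge \dots \ge \ell_k$ the leg lengths of $G$. Introduce two random variables: $T_{\rm first}$, the round in which the first force occurs (set $T_{\rm first}=0$ when $v=u$; otherwise $v$ has degree $2$ on a leg, so $\e[T_{\rm first}]=\tfrac{4}{3}$ by Observation \ref{o:first}), and $T_{\rm nbr}$, the number of rounds after $u$ is colored until all neighbors of $u$ are blue, with $\e[T_{\rm nbr}]=O(\log k)$ by Lemma \ref{lem:starlog} applied to $G[N[u]]$. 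Between these two random episodes the process is deterministic. The goal is to show that the total propagation time $T$ satisfies $T \le T_{\rm first} + T_{\rm nbr} + \rad(G)$, which will let us conclude via Markov's inequality applied to the sum $T_{\rm first}+T_{\rm nbr}$.

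To establish this pathwise bound, I would handle each leaf separately: the leaf at the end of $v$'s leg is colored by time $T_{\rm first}+(\ell_1-a)$ via direct deterministic propagation from $v$ once the first force has occurred, while a leaf at the end of leg $i\neq 1$ is colored by time at most $T_{\rm first}+a+T_{\rm nbr}+(\ell_i-1)$ via propagation through $u$ and then out along that leg. Because $v$ is a center, the eccentricity identity $\rad(G)=\max(\ell_1-a,\,a+\ell_2)$ gives $\ell_1-a\le\rad(G)$ and $a+\ell_i\le\rad(G)$ for every $i\neq 1$, so in either case the leaf is colored by time $T_{\rm first}+T_{\rm nbr}+\rad(G)$. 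Taking the maximum over leaves yields the bound on $T$. Then Markov's inequality applied to $T_{\rm first}+T_{\rm nbr}$, whose expectation is at most $\tfrac{4}{3}+C\log k$ for some absolute constant $C$, shows that with probability at least $\alpha$ one has $T_{\rm first}+T_{\rm nbr}\le(\tfrac{4}{3}+C\log k)/(1-\alpha)$, hence $T\le \rad(G)+O(\log k)$ with the constant depending on $\alpha$, giving $\ptpf(G,\alpha)\le \rad(G)+O(\log k)$. The main subtlety is the per-leaf case analysis in the pathwise bound: one has to use the center condition to absorb both $\ell_1-a$ and $a+\ell_i$ into $\rad(G)$ so that the radius appears linearly and only the $O(\log k)$ randomness is scaled by the Markov factor $1/(1-\alpha)$.
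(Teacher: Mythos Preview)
Your proposal is correct and follows exactly the approach the paper indicates: mirror the proof of Proposition~\ref{prop:spiderep}, isolate the two random quantities $T_{\rm first}$ and $T_{\rm nbr}$ whose sum has expectation $O(\log k)$, and apply Markov's inequality to that sum rather than to the total propagation time, so that the deterministic $\rad(G)$ term is not inflated by the $1/(1-\alpha)$ factor. Your explicit per-leaf argument using the center identity $\rad(G)=\max(\ell_1-a,\,a+\ell_2)$ makes precise a step the paper leaves implicit; just note that when $v=u$ the ``leg~1 leaf'' should be handled by the same bound as the other legs (your $T_{\rm first}+(\ell_1-a)$ claim is not literally valid in that degenerate case, but the overall inequality $T\le T_{\rm first}+T_{\rm nbr}+\rad(G)$ still holds).
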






 \section{Probabilistic throttling}\label{s:throt}

Many results about expected propagation time can be applied to obtain results about  probabilistic throttling,  $\thpf(G,Z)=|Z|+\ept(G,Z)$, and the probabilistic throttling number  $ \thpf(G)=\min\{\thpf(G,Z)\}$.
The next result follows from Lemma \ref{lem:starlogdist} by choosing a dominating set, with $\gamma (G)$ denoting the domination number and $\Delta(G)$ denoting the maximum degree.

\begin{cor}\label{cor:domnumber}
For any connected graph $G$ of order $n$,  $\thpf(G) \le \gamma(G)+O(\log{\gamma(G)}\log{\Delta(G)})$. 
\end{cor}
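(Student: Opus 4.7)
The plan is to choose $Z$ to be a minimum dominating set $D$ of $G$, so that $|Z| = \gamma(G)$, and then apply Lemma \ref{lem:starlogdist} to bound $\ept(G,D)$.

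First I would observe that since $D$ is a dominating set, every vertex of $G$ is either in $D$ or adjacent to some vertex of $D$. Color all the vertices of $D$ blue at time $0$. Then the set of vertices that must still be turned blue is precisely $N(D) \setminus D$, and every such vertex has a blue neighbor in $D$ from the very first round.

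Next I would invoke Lemma \ref{lem:starlogdist} with the blue set $\{v_1,\dots,v_b\} = D$. Each vertex in $D$ has degree at most $\Delta(G)$, and $|D| = \gamma(G)$, so taking $b = \gamma(G)$ and $k = \Delta(G)$, the lemma gives that the expected number of rounds until every neighbor of a vertex in $D$ is colored blue is $O(\log \gamma(G) \log \Delta(G))$. Since $D$ is dominating, this means every vertex of $G$ is blue within that many expected rounds, so
\[
\ept(G, D) \;=\; O(\log \gamma(G) \log \Delta(G)).
\]

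Combining these,
\[
\thpf(G) \;\le\; \thpf(G, D) \;=\; |D| + \ept(G, D) \;\le\; \gamma(G) + O(\log \gamma(G) \log \Delta(G)),
\]
which is the desired bound. There is no real obstacle here since the two ingredients---the dominating set gives the $\gamma(G)$ term, and Lemma \ref{lem:starlogdist} directly handles the propagation time from the dominating set---fit together immediately; the only care needed is noting that in Lemma \ref{lem:starlogdist} the blue vertices $v_1,\dots,v_b$ are assumed present simultaneously from the start, which is precisely our situation when $Z = D$ is the initial blue set.
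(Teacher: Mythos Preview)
Your proposal is correct and matches the paper's approach exactly: the paper simply states that the result follows from Lemma~\ref{lem:starlogdist} by choosing a dominating set, which is precisely what you do. Your additional remark that the hypotheses of Lemma~\ref{lem:starlogdist} are satisfied because all of $D$ is blue from the outset is a fair point to make explicit.
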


\begin{cor}
With high probability, $\thpf(G(n, p)) = O(\log{n} \log{\log{n}})$ for any fixed $0 < p < 1$.
\end{cor}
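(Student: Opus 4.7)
The plan is to deduce this bound directly from Corollary~\ref{cor:domnumber}, which states that for any connected graph $G$,
\[ \thpf(G) \le \gamma(G) + O\!\left(\log \gamma(G)\, \log \Delta(G)\right).\]
So the whole task reduces to controlling the domination number $\gamma(G(n,p))$ and the maximum degree $\Delta(G(n,p))$ with high probability, and then substituting.

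First I would invoke the standard fact that for every fixed $p \in (0,1)$, with high probability $\gamma(G(n,p)) = O(\log n)$; concretely, $\gamma(G(n,p)) \sim \log_{1/(1-p)} n$, which is easily verified by noting that a uniformly random set of $C \log n$ vertices dominates every other vertex with probability $1 - o(1)$ by a union bound, since each fixed vertex fails to be dominated with probability $(1-p)^{C \log n} = n^{-C \log(1/(1-p))}$, and choosing $C$ large enough makes this $o(1/n)$. Next, $\Delta(G(n,p)) \le n$ trivially, so $\log \Delta(G(n,p)) = O(\log n)$ with certainty.

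Finally, combining these with Corollary~\ref{cor:domnumber} gives, with high probability,
\[ \thpf(G(n,p)) \le O(\log n) + O(\log \log n \cdot \log n) = O(\log n \log \log n),\]
which is the desired conclusion. I also need to observe that $G(n,p)$ is connected with high probability for fixed $p$ (so that the statement is meaningful); this is immediate since even diameter $2$ holds with high probability, as used in Corollary~\ref{cor:randept}. The only mildly nontrivial step is the upper bound on $\gamma(G(n,p))$, but since this is a classical union-bound argument, there is no real obstacle; the entire proof is essentially a substitution into Corollary~\ref{cor:domnumber}.
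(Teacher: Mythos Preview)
Your proof is correct and follows essentially the same approach as the paper: invoke Corollary~\ref{cor:domnumber} together with the standard fact that $\gamma(G(n,p)) = O(\log n)$ with high probability. The paper's proof is just two sentences and leaves the bound on $\Delta$ and the connectivity of $G(n,p)$ implicit, while you spell these out, but the argument is the same.
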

\bpf With probability at least $1-o(1)$, $G(n, p)$ has domination number $O(\log{n})$. Thus by Corollary \ref{cor:domnumber}, the throttling number of $G(n, p)$ is $O(\log{n} \log{\log{n}})$ with high probability.  \epf

For a tree $T$, we use $\thp(T)$ to denote the throttling number of $T$ for PSD zero forcing. On trees, this throttling number coincides with the throttling numbers for Cops and Robbers and distance domination, which are both equal to $\min_k\{k+\rad_k(T)\}$ \cite{CRthrottle}.  By Observation \ref{obs:treeprop}, $\thp(T,Z)\le \thpf(T,Z)$ for all sets of vertices $Z$, and similarly for cycles when $Z$ contains at least two vertices.

\begin{prop}\label{c_n}
$\thpf(C_{n}) = \sqrt{2n} + O(\log{n})$ and $\thpf(P_{n}) = \sqrt{2n} + O(\log{n})$
\end{prop}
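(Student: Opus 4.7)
The plan is to establish matching bounds showing $\thpf(C_n) = \thpf(P_n) = \sqrt{2n} \pm O(\log n)$.  I describe the cycle case in detail; the path case is completely analogous.

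\emph{Upper bound for $C_n$.}  Let $Z$ consist of $k = \lceil \sqrt{n/2}\,\rceil$ vertices placed as evenly as possible on $C_n$, so every gap between consecutive vertices of $Z$ contains at most $\lceil n/k \rceil - 1$ white vertices.  I decompose $\ept(C_n,Z)$ into two phases.  In the first phase each vertex of $Z$ has two white neighbors and fires at each independently with probability $1/2$, so its first successful force occurs in a geometric number of rounds with success probability $3/4$; the time until every vertex of $Z$ has made its first force is the maximum of $k$ i.i.d. such variables, which has expectation $O(\log k) = O(\log n)$.  Once $v \in Z$ has made its first force, $v$ has degree $2$ with $|N[v] \cap B| = 2$, so by \eqref{eq:pforce} its remaining white neighbor is forced with probability $1$ in the next round; thereafter the two blue fronts bordering each gap advance at rate one per round and close the gap in at most $n/(2k) + O(1)$ further rounds.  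Combining yields
\[
\thpf(C_n) \le k + \ept(C_n,Z) \le k + \frac{n}{2k} + O(\log n) = \sqrt{2n} + O(\log n).
\]

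\emph{Lower bound for $C_n$.}  For any nonempty $Z$ with $|Z| = k$, if $k = 1$ then Proposition \ref{prop:ept-cycle} gives $\thpf(C_n,Z) = 1 + n/2 + O(1) \gg \sqrt{2n}$.  If $k \ge 2$ then Remark \ref{obs:treeprop} provides $\ept(C_n,Z) \ge \ptp(C_n,Z)$.  Under PSD forcing on $C_n$ each gap between consecutive vertices of $Z$ shrinks from both ends at rate one, so $\ptp(C_n,Z) \ge \lceil (\max_i g_i)/2 \rceil$ where $g_1,\dots,g_k$ are the $k$ gap lengths.  Since $\sum_i g_i = n - k$, pigeonhole gives $\max_i g_i \ge (n-k)/k$, so
\[
\thpf(C_n,Z) \ge k + \frac{n-k}{2k} = k + \frac{n}{2k} - \tfrac{1}{2} \ge \sqrt{2n} - \tfrac{1}{2}
\]
by AM--GM, completing the cycle bound.

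\emph{Path case.}  For the upper bound, place the $k = \lceil \sqrt{n/2}\,\rceil$ vertices of $Z$ so that both endpoints of $P_n$ are in $Z$ and the remaining vertices are spread evenly; the endpoint vertices have degree $1$ and force their single white neighbor deterministically, so the first-force phase still takes $O(\log n)$ expected rounds, and the deterministic closing of gaps of size at most $\lceil (n-2)/(k-1)\rceil$ takes $n/(2k) + O(1)$ additional rounds.  For the lower bound, paths are trees, so Remark \ref{obs:treeprop} applies to every nonempty $Z$; the analogous pigeonhole estimate (now over at most $k+1$ gaps summing to $n-k$, noting that endpoint gaps require more time than interior gaps of the same length) gives $\ptp(P_n,Z) \ge (n-k)/(2(k+1))$, which yields $\thpf(P_n) \ge \sqrt{2n} - O(1)$ after AM--GM.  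The only non-routine ingredient throughout is the $O(\log k)$ bound on the expected maximum of $k$ i.i.d. geometric variables with constant success probability, which follows by summing the tail bound $\pr(\max \ge t) \le k (1/4)^{t-1}$; this is where the extra $\log n$ term in the upper bound (but not the lower bound) comes from.
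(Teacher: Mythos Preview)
Your proof is correct and follows essentially the same approach as the paper: the lower bound via $\ept\ge\ptp$ (Remark \ref{obs:treeprop}) combined with the PSD throttling bound, and the upper bound via $k\approx\sqrt{n/2}$ evenly spaced initial blue vertices, an $O(\log n)$ ``first-force'' phase, and a deterministic finish. The differences are cosmetic: where the paper invokes Lemma \ref{lem:starlogdist} for the $O(\log n)$ phase and cites \cite{PSDthrottle} for $\thp(P_n)=\thp(C_n)=\lceil\sqrt{2n}-\tfrac12\rceil$, you instead argue directly with the maximum of $k$ geometric$(3/4)$ variables and rederive the PSD lower bound via pigeonhole and AM--GM. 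One small imprecision: the first-force times are not literally i.i.d.\ once fronts start moving, but they are stochastically dominated by i.i.d.\ geometric$(3/4)$ variables, which is all your tail-sum estimate needs.
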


\begin{proof}
The lower bound follows from the relationship with PSD throttling and the fact that $\thp(P_n)=\thp(C_n)=\lc\sqrt{2n}-\frac 1 2\rc$ (for the cycle $n\ge 4$ is needed) \cite[Theorems 3.2, 3.3]{PSDthrottle}. 

For the upper bound, suppose that we place $k = \lc \sqrt{\frac n 2}\, \rc$ blue vertices $v_1, \dots, v_{k}$ on the graph so that every vertex in the graph is within distance $\lf \sqrt{\frac n 2} \rf$ of a blue vertex, as in the proofs of Theorems 3.2 and 3.3 in \cite{PSDthrottle}. 
Note that the expected time for all of the initial blue vertices to have a blue neighbor is $O(\log{k})=O(\log n)$ by Lemma \ref{lem:starlogdist}. After all initial blue vertices have a blue neighbor, the probabilistic zero forcing process becomes deterministic, so the remaining time to color the whole graph blue is at most $\lf \sqrt{\frac n 2} \rf$. 
\end{proof}

\begin{prop}\label{lem:thspider}
If $T$ is a spider of order $n$, then $\thpf(T) = \thp(T)+O(\log{n})$.
\end{prop}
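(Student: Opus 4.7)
The plan is to handle lower and upper bounds separately. The lower bound $\thpf(T) \ge \thp(T)$ follows directly from Observation \ref{obs:treeprop}: for every nonempty $Z \subseteq V(T)$, $\ept(T,Z) \ge \ptp(T,Z)$, so $\thpf(T,Z) \ge \thp(T,Z)$, and taking minima preserves the inequality.

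For the upper bound, let $u$ be the body vertex (with $k = \deg u$) and let $Z^*$ be an optimal PSD throttling set, so $\thp(T) = |Z^*| + \ptp(T,Z^*)$. I would take $Z = Z^* \cup \{u\}$; since $|Z| \le |Z^*|+1$ and monotonicity of PSD propagation time in the initial set yields $\ptp(T,Z) \le \ptp(T,Z^*)$, it suffices to show $\ept(T,Z) \le \ptp(T,Z^*) + O(\log n)$.

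To bound $\ept(T,Z)$, let $T_0$ denote the first round after which every vertex of $N[Z]$ is blue. I would bound $E[T_0] = O(\log n)$ by applying Lemma \ref{lem:starlog} to $u$ (giving expected $O(\log k)$ rounds for all $k$ body-neighbors to become blue) and applying Lemma \ref{lem:starlogdist} with maximum degree $2$ to the leg-vertices of $Z^*$ (giving expected $O(\log|Z^*|)$ rounds for all their neighbors to become blue), then using $E[\max] \le$ sum of expectations. The key structural claim is that from time $T_0$ onward the PZF process is deterministic and coincides with PSD zero forcing: any blue vertex $y$ with a white neighbor at time $\ge T_0$ cannot be the body (whose neighbors are all blue by definition of $T_0$), so $\deg y \le 2$; moreover $y$ is either in $Z$, in which case the definition of $T_0$ forces both of $y$'s neighbors to be blue (contradicting the existence of a white neighbor), or $y$ is adjacent to some blue vertex (either one in $Z$, or the vertex that forced $y$ during the burn-in, which remains blue forever after). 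In the only nontrivial case $y$ has exactly one blue and one white neighbor and fires with probability $|N[y]\cap B|/\deg y = 2/2 = 1$. Hence the remaining time after $T_0$ is at most $\ptp(T,Z)-1 \le \ptp(T,Z^*)$, giving $\ept(T,Z) \le E[T_0] + \ptp(T,Z^*) = \ptp(T,Z^*) + O(\log n)$ and therefore $\thpf(T) \le |Z| + \ept(T,Z) \le \thp(T) + O(\log n)$.

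The main obstacle is the determinism claim after $T_0$: one must classify all possible blue vertices with white neighbors (namely vertices of $Z$, neighbors of $Z$, and any vertices forced beyond $N[Z]$ during the burn-in) and verify in each case that they already possess a blue neighbor. This relies crucially on the spider structure—every non-body vertex has degree at most $2$, so a blue vertex with one blue and one white neighbor must fire with probability exactly $1$. The bound on $E[T_0]$ itself is a routine application of Lemmas \ref{lem:starlog} and \ref{lem:starlogdist}.
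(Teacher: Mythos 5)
Your proof is correct and follows essentially the same route as the paper: start from an optimal PSD throttling set, spend an expected $O(\log n)$ burn-in (Lemmas \ref{lem:starlog} and \ref{lem:starlogdist}) until all neighbors of the initially blue vertices are blue, and observe that the spider structure then makes the probabilistic process deterministic and no slower than PSD forcing, so the remainder is bounded by $\ptp(T,Z^*)$. Your one deviation---adding the body vertex to the initial set at a cost of $+1$ so as to avoid the paper's case split on whether $u$ lies in the optimal set---is a harmless simplification, and your explicit verification of the determinism claim after the burn-in is, if anything, more careful than the paper's.
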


\begin{proof}
The lower bound is immediate since $\thpf(T) \geq \thp(T)$ for any tree $T$. 
For the upper bound, choose a  set $Z$ of vertices in $T$ that achieves $\thp(G)$ and initially color all vertices in $Z$ blue.
Let $u$ denote  the body vertex. 

 The expected number of rounds before all vertices in $Z$ other than $u$ (if $u\in Z$) have a blue neighbor is $O(\log{n})$ by Lemma \ref{lem:starlogdist}. The expected number of rounds before all neighbors of $u$ are successfully forced is $O(\log{n})$ by Lemma \ref{lem:starlog}.

Thus the expected number of rounds for all vertices in $Z$ to have a blue neighbor (and for all neighbors of $u$ to be successfully forced if $u \in Z$) is $O(\log{n})$. If $u\in Z$, then the process is deterministic until $G$ is all blue and the upper bound follows. If $u\notin Z$, then the process is deterministic until $u$ is colored blue. The expected number of rounds after that point for all neighbors of $u$ to get colored blue is $O(\log{n})$. Then the process is deterministic until $G$ is all blue and the upper bound follows.
\end{proof}

\begin{thm}\label{thm:pzfth}
Among connected graphs of order $n$, the maximum possible probabilistic  throttling number is $\Omega(\sqrt{n})$ and $O(\sqrt{n} \log(n)^2)$.
\end{thm}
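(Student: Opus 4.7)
The lower bound is in hand already: Proposition \ref{c_n} gives $\thpf(P_n) = \sqrt{2n} + O(\log n)$, so the family of paths of order $n$ witnesses $\thpf(G) = \Omega(\sqrt n)$.

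For the upper bound, the plan is to balance $|Z|$ against $\ept(G,Z)$ by choosing an initial blue set $Z$ of size $O(\sqrt n)$ that is distance-$k$-close to every vertex of $G$, where $k = \lceil \sqrt n\,\rceil$. To produce such a $Z$, I will take any spanning tree $T$ of $G$, root it arbitrarily, and run the standard greedy distance-$k$ covering: while $T$ is nonempty, select a deepest remaining leaf, walk $\min(k,\text{depth})$ steps toward the root to obtain a vertex $u$, add $u$ to $Z$, and delete the subtree of $T$ rooted at $u$ that contains the chosen leaf. Every iteration either terminates the process or removes at least $k+1$ vertices from $T$, so $|Z| \le \lceil n/(k+1)\rceil = O(\sqrt n)$, and by construction every vertex of $T$ (hence of $G$) lies within $T$-distance $k$ of some vertex of $Z$.

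Next, with $Z$ colored blue initially, I will bound $\ept(G,Z)$ by adapting the layer-by-layer argument in the proof of Theorem \ref{t:allgraphbds}. Partition $V(G) \setminus Z$ into BFS layers $L_1, L_2, \dots, L_k$ around $Z$, where $L_i$ consists of the vertices at distance exactly $i$ from $Z$. Once every vertex of $Z \cup L_1 \cup \cdots \cup L_i$ is blue, the blue vertices collectively cover every vertex of $L_{i+1}$ as a neighbor, so Lemma \ref{lem:starlogdist} applies (with $b,k \le n$) to give expected $O((\log n)^2)$ additional rounds until every vertex of $L_{i+1}$ is blue. Summing this bound across the at most $k = O(\sqrt n)$ layers yields $\ept(G,Z) = O(\sqrt n (\log n)^2)$, and therefore
\[
\thpf(G) \le |Z| + \ept(G,Z) = O(\sqrt n) + O(\sqrt n (\log n)^2) = O(\sqrt n (\log n)^2).
\]

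The main subtlety I anticipate is the second step: strictly speaking, Lemma \ref{lem:starlogdist} is phrased starting from the moment all of the relevant blue vertices have already been colored, so I need to verify that chaining the bound across the $k$ layers is valid (by restarting the accounting after each layer becomes fully blue and using linearity of expectation on the wait times). Constructing $Z$ in the first step is routine, as is the choice $k = \lceil \sqrt n\,\rceil$, which is what makes the two terms $|Z|$ and $k(\log n)^2$ balance up to the polylogarithmic factor.
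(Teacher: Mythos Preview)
Your proof is correct and follows essentially the same approach as the paper: the lower bound via Proposition~\ref{c_n}, and the upper bound by taking a spanning tree, choosing a $\lceil\sqrt{n}\,\rceil$-center $Z$ of size $O(\sqrt n)$, and then applying Lemma~\ref{lem:starlogdist} layer by layer over the $O(\sqrt n)$ distance levels to obtain $\ept(G,Z)=O(\sqrt n(\log n)^2)$. Your write-up is in fact more explicit than the paper's on both the construction of $Z$ and the chaining of Lemma~\ref{lem:starlogdist} across layers, and your remark about restarting the accounting and using linearity of expectation is exactly the justification needed.
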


\begin{proof}
By Proposition  \ref{c_n}, the path and the cycle achieve $\Omega(\sqrt{n})$.  For the upper bound, let $T$ be a spanning subtree of $G$. Initially color a subset $Z$ of $\lf \sqrt{n} \rf$ vertices such that $Z$ is a $\lf \sqrt{n} \rf$-center of $T$.

At an arbitrary step of the coloring process, suppose that there are $b \leq n$ blue vertices $v_1, \dots, v_b$ that have at least $1$ white neighbor. By Lemma \ref{lem:starlogdist}, we have that after the first round during which all of $v_1, \dots, v_b$ are colored, the expected number of rounds for all neighbors of $v_1, \dots, v_b$ to get colored is $O((\log{n})^{2})$. 

Since all vertices in $T$ are within distance $\lc \sqrt{n}\, \rc$ of a vertex in $Z$, the expected number of rounds until every vertex in $T$ is blue is $O(\sqrt{n} (\log{n})^{2})$.
\end{proof}


\section{Confidence throttling}\label{s:alphath}
 In this section we define throttling for confidence propagation time. The {\em confidence throttling number} of a graph $G$ is 
\[\thr(G, \alpha)= \min_{Z\subseteq V(G)}\lp|Z|+\ptpf(G,Z,\alpha)\rp.\] 

In order to derive a general upper bound on confidence throttling for connected graphs, we use a bound on $\ptpf(G,Z,\alpha)$. The next lemma generalizes Lemma \ref{epttoalpha} and is proved the same way.
\begin{lem} \label{epttoalphath}
If $G$ is a connected graph and $Z$ is any set of vertices, then $\ptpf(G,Z,\alpha) \leq \frac{\ept(G,Z)}{1-\alpha}$.
\end{lem}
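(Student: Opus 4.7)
The plan is to apply Markov's inequality directly to the random variable $\ptpf(G,Z)$, exactly as in the proof of Lemma \ref{epttoalpha}, treating $Z$ as a generic starting set rather than a single vertex. Since $\ptpf(G,Z)$ is a nonnegative integer-valued random variable whose expectation is $\ept(G,Z)$, Markov's inequality gives
\[
\pr\!\lp \ptpf(G,Z) \geq T \rp \;\leq\; \frac{\ept(G,Z)}{T}
\]
for every $T > 0$.

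Next I would set $T = \frac{\ept(G,Z)}{1-\alpha}$ and conclude that
\[
\pr\!\lp \ptpf(G,Z) \geq T \rp \;\leq\; 1-\alpha,
\]
or equivalently, the probability that every vertex of $G$ is blue after round $T$ is at least $\alpha$. By the definition of $\ptpf(G,Z,\alpha)$ as the least number of rounds after which the all-blue probability is $\geq \alpha$, this immediately yields $\ptpf(G,Z,\alpha) \leq T = \frac{\ept(G,Z)}{1-\alpha}$, as required.

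There is essentially no obstacle here: the only thing to be a bit careful about is that $T$ as defined need not be an integer, but since the event ``all vertices blue by round $\lceil T \rceil$'' has probability at least the same lower bound $\alpha$, and $\lceil T \rceil \le T$ only when $T$ is already an integer, the stated inequality $\ptpf(G,Z,\alpha) \le \frac{\ept(G,Z)}{1-\alpha}$ holds either way (one can take the ceiling at the end without loss). This is the same routine argument as in Lemma \ref{epttoalpha}, simply carried out with the set $Z$ retained as a parameter throughout.
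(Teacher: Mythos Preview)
Your proof is correct and follows exactly the same approach as the paper, which simply states that this lemma ``generalizes Lemma \ref{epttoalpha} and is proved the same way'' via Markov's inequality applied to $\ptpf(G,Z)$ with $T=\ept(G,Z)/(1-\alpha)$. Your discussion of the integer-rounding issue is slightly more careful than the paper's treatment but is not needed for the stated inequality.
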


\begin{cor}
For all $0 < \alpha < 1$, the maximum possible value of $\thr(G,\alpha)$ among connected graphs of order $n$ is $O(\sqrt{n} \log(n)^2)$, where the constant depends on $\alpha$.
\end{cor}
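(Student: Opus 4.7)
The plan is to reuse the construction from the proof of Theorem \ref{thm:pzfth} and then convert the expected propagation time bound into a confidence propagation time bound using Lemma \ref{epttoalphath}. Concretely, given a connected graph $G$ of order $n$, take a spanning tree $T$ of $G$ and let $Z$ be a $\lfloor \sqrt{n}\rfloor$-center of $T$, so $|Z| = \lfloor\sqrt{n}\rfloor$ and every vertex of $G$ lies within distance $\lceil\sqrt{n}\rceil$ of $Z$ in $T$ (hence in $G$).

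The proof of Theorem \ref{thm:pzfth} already establishes that with this choice of $Z$, the expected propagation time satisfies $\ept(G,Z) = O(\sqrt{n}(\log n)^2)$; the argument only uses Lemma \ref{lem:starlogdist} applied layer by layer, and it bounds $\ept(G,Z)$, not merely $\ept(G)$. I would invoke that bound as a black box here rather than re-running the computation.

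Next, I apply Lemma \ref{epttoalphath} to this particular $Z$:
\[
\ptpf(G,Z,\alpha) \le \frac{\ept(G,Z)}{1-\alpha} = \frac{1}{1-\alpha}\cdot O\bigl(\sqrt{n}(\log n)^2\bigr),
\]
so $\ptpf(G,Z,\alpha) = O(\sqrt{n}(\log n)^2)$ with a constant that depends on $\alpha$ through the factor $\tfrac{1}{1-\alpha}$.

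Combining the two pieces yields
\[
\thr(G,\alpha) \le |Z| + \ptpf(G,Z,\alpha) \le \lfloor\sqrt{n}\rfloor + \frac{1}{1-\alpha}\cdot O\bigl(\sqrt{n}(\log n)^2\bigr) = O\bigl(\sqrt{n}(\log n)^2\bigr),
\]
with the constant depending on $\alpha$, which is the desired conclusion. There is no real obstacle, since the two ingredients (the $\ept(G,Z)$ bound inside the proof of Theorem \ref{thm:pzfth} and the Markov-style conversion in Lemma \ref{epttoalphath}) are already in place; the only thing to be careful about is that Theorem \ref{thm:pzfth} is stated for $\thpf(G)$ but its proof actually bounds $\ept(G,Z)$ for the specific $Z$ above, which is exactly what Lemma \ref{epttoalphath} needs.
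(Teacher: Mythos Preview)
Your proposal is correct and matches the paper's intended argument: the corollary is placed immediately after Lemma \ref{epttoalphath} precisely because it follows by applying that lemma to the set $Z$ constructed in the proof of Theorem \ref{thm:pzfth}, exactly as you do. Your observation that Theorem \ref{thm:pzfth}'s proof actually bounds $\ept(G,Z)$ for that specific $Z$ (not just $\thpf(G)$) is the right point to make explicit.
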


The next result on confidence throttling for paths and cycles has nearly the same proof as Proposition \ref{c_n} from the last section. The expected time for all of the vertices in the initial blue set to have a blue neighbor is $O(\log{n})$ by Lemma \ref{lem:starlogdist}, so by Markov's inequality, it takes $O(\log{n})$ rounds for the probability to exceed $\alpha$ that all of the vertices in the initial blue set have a blue neighbor, where the constant in the $O(\log{n})$ depends on $\alpha$.

\begin{lem}
For all\, $0 < \alpha < 1$, $\thr(C_n,\alpha) = \sqrt{2n}+O(\log{n})$ and $\thr(P_n,\alpha) = \sqrt{2n}+O(\log{n})$, where the constant in the $O(\log{n})$ depends on $\alpha$.
\end{lem}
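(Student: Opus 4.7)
The plan is to mirror the proof of Proposition~\ref{c_n}, replacing the expected-time control by a confidence-time control via Markov's inequality.

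For the lower bound on $\thr(G,\alpha)$, I argue as in Remark~\ref{obs:treeprop}: on $P_n$, and on $C_n$ whenever $|Z|\ge 2$, a probabilistic zero forcing process is pathwise no faster than deterministic PSD zero forcing, so $\ptpf(G,Z)\ge \ptp(G,Z)$ with probability $1$, and hence $\ptpf(G,Z,\alpha)\ge \ptp(G,Z)$ for every $\alpha>0$. Combining with $\thp(P_n)=\thp(C_n)=\lceil\sqrt{2n}-\tfrac{1}{2}\rceil$ from \cite[Theorems 3.2, 3.3]{PSDthrottle} yields $|Z|+\ptpf(G,Z,\alpha)\ge \sqrt{2n}+O(1)$ for every such $Z$. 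The one remaining case is $G=C_n$ with $|Z|=1$, which is handled by the trivial radius bound $\ptpf(C_n,\{v\},\alpha)\ge \lfloor n/2\rfloor$ valid for any $\alpha>0$; this already gives $|Z|+\ptpf\ge 1+\lfloor n/2\rfloor$, which exceeds $\sqrt{2n}$ for all sufficiently large $n$. Minimizing over $Z$ yields the lower bound.

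For the upper bound, I take the same initial blue set as in the proof of Proposition~\ref{c_n}: place $k=\lceil\sqrt{n/2}\,\rceil$ equally spaced blue vertices $Z=\{v_1,\dots,v_k\}$, so that every vertex of $G$ lies within distance $\lfloor\sqrt{n/2}\rfloor$ of $Z$. By Lemma~\ref{lem:starlogdist} applied with the $b=k$ starting blue vertices of maximum degree $2$, the expected number of rounds until every $v_i$ has at least one blue neighbor is $O(\log k)=O(\log n)$. Applying Markov's inequality in the same way as in Lemma~\ref{epttoalphath}, within $\tfrac{1}{1-\alpha}\cdot O(\log n)=O(\log n)$ rounds (with the constant depending on $\alpha$), every $v_i$ has a blue neighbor with probability at least $\alpha$.

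Once every $v_i$ has a blue neighbor, each blue vertex on the path or cycle has closed-neighborhood blue count at least $2$ and degree $2$, so its probability of firing any remaining white neighbor equals $1$, and the rest of the process is deterministic. Since every vertex is within distance $\lfloor\sqrt{n/2}\rfloor$ of $Z$, at most $\lfloor\sqrt{n/2}\rfloor$ further rounds suffice. Combining,
\[\thr(G,\alpha)\le |Z|+\ptpf(G,Z,\alpha)\le \lceil\sqrt{n/2}\,\rceil+\lfloor\sqrt{n/2}\,\rfloor+O(\log n)=\sqrt{2n}+O(\log n).\]
The main obstacle is the clean coupling of the stochastic ``first-force'' phase (controlled by Markov's inequality applied to the expected time) with the deterministic tail; this is handled by the observation that once every initial blue vertex has a blue neighbor, the deterministic tail contributes zero additional failure probability, so the $\alpha$-confidence level of the first phase is inherited by the full process.
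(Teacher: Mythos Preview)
Your proof is correct and follows essentially the same approach as the paper: the lower bound via the pathwise comparison with PSD propagation (plus the radius bound for the single-vertex case on $C_n$), and the upper bound by placing $\lceil\sqrt{n/2}\,\rceil$ evenly spaced vertices, applying Markov's inequality only to the $O(\log n)$ expected first-force phase from Lemma~\ref{lem:starlogdist}, and then completing deterministically. Your explicit remark that the deterministic tail contributes no additional failure probability is exactly the point the paper has in mind when it says the proof is ``nearly the same'' as Proposition~\ref{c_n} with Markov's inequality inserted.
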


\begin{cor}
For all $0 < \alpha < 1$, the maximum possible value of $\thr(G,\alpha)$ among connected graphs of order $n$ is $\Omega(\sqrt{n})$.
\end{cor}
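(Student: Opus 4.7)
The plan is to invoke the preceding lemma and observe that the path itself (or the cycle) furnishes the desired lower-bound witness. The lemma gives $\thr(P_n,\alpha) = \sqrt{2n}+O(\log n)$, and since $P_n$ is a connected graph on $n$ vertices for every $n$, the maximum of $\thr(G,\alpha)$ over all connected graphs of order $n$ is at least $\thr(P_n,\alpha)$, which is $\Omega(\sqrt n)$. In one sentence: take $G = P_n$ (or $G = C_n$) and apply the lemma.

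The only thing that may merit a brief justification is the $\Omega(\sqrt n)$ direction of the equality stated in the lemma, since the discussion there is focused on the Markov-style upper bound. I would supply this exactly as in the proof of Proposition \ref{c_n}, via the comparison with PSD throttling. For any nonempty $Z\subseteq V(P_n)$, a vertex at distance $d$ from $Z$ cannot turn blue before round $d$ in any realization of the probabilistic process (each round advances the blue set by at most one step along any branch), so $\ptpf(P_n,Z)\ge \max_{v}\dist(v,Z)= \ptp(P_n,Z)$ pointwise as random variables. In particular, for every $\alpha>0$, the event ``all vertices blue by round $t$'' has probability $0$ whenever $t<\ptp(P_n,Z)$, so
\[
\ptpf(P_n,Z,\alpha)\;\ge\;\ptp(P_n,Z).
\]
Taking the minimum over $Z$ and using the value of $\thp(P_n)$ quoted in the proof of Proposition \ref{c_n} yields
\[
\thr(P_n,\alpha)\;\ge\;\min_{Z}\bigl(|Z|+\ptp(P_n,Z)\bigr)\;=\;\thp(P_n)\;=\;\lceil\sqrt{2n}-\tfrac12\rceil,
\]
which is $\Omega(\sqrt n)$ uniformly in $\alpha\in(0,1)$.

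There is no real obstacle here: the corollary is essentially a specialization of the preceding lemma to the single family $\{P_n\}$ (or $\{C_n\}$), and the only bookkeeping is extracting the matching lower bound from the equality, which the PSD comparison above handles in a couple of lines.
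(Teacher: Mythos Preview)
Your proposal is correct and matches the paper's intended argument: the corollary is meant to follow immediately from the preceding lemma by taking $G=P_n$ (or $C_n$), and the paper does not even include a separate proof. Your extra paragraph justifying $\ptpf(P_n,Z,\alpha)\ge \ptp(P_n,Z)$ via the distance argument is exactly the ``relationship with PSD throttling'' that the paper invokes (by reference to Proposition~\ref{c_n}) for the lower-bound half of the lemma, so you have simply made explicit what the paper leaves implicit.
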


Like in the last section, we obtain a tighter throttling bound for spiders.  This follows from a proof very similar to Proposition \ref{lem:thspider}, using Markov's inequality on the sum of the times for every vertex in the initial blue set to have a blue neighbor and for all white neighbors of the body vertex to be forced after the body vertex is colored.

\begin{lem}
If $T$ is a spider of order $n$, then $\thr(T,\alpha) = \thp(T)+O(\log{n})$, where the constant in the $O(\log{n})$ depends on $\alpha$.
\end{lem}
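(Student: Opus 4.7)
The plan is to mirror the proof of Proposition \ref{lem:thspider} for the expected-time case and then convert the expected-time estimates to confidence-time estimates via Markov's inequality, exactly in the spirit of Lemma \ref{epttoalphath}.

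For the lower bound $\thr(T,\alpha) \ge \thp(T)$, I would show that $\ptpf(T,Z,\alpha) \ge \ptp(T,Z)$ for every non-empty $Z \subseteq V(T)$ and every $\alpha > 0$. On a tree, the white neighbors of any blue vertex $v$ lie in distinct components of $T-v$, so even if every fire in a round succeeds the set of newly blue vertices is precisely what PSD zero forcing would produce. Hence no realization of the probabilistic process finishes in fewer than $\ptp(T,Z)$ rounds, and $\pr(\text{all blue by round } t) = 0$ for $t < \ptp(T,Z)$. Since on a tree every non-empty $Z$ is a PSD zero forcing set, minimizing $|Z| + \ptpf(T,Z,\alpha)$ over $Z$ gives the desired lower bound.

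For the upper bound, pick $Z$ achieving $\thp(T) = |Z| + \ptp(T,Z)$ and let $u$ be the body vertex of $T$. As in Proposition \ref{lem:thspider}, once every vertex of $Z \setminus \{u\}$ has a blue neighbor and all neighbors of $u$ have been colored (after $u$ itself turns blue, if $u \notin Z$), the forcing is deterministic and needs at most $\ptp(T,Z)$ further rounds to finish. By Lemma \ref{lem:starlogdist} the first wait has expectation $O(\log n)$ (every vertex of $Z \setminus \{u\}$ lies on a leg and so has degree at most $2$); by Lemma \ref{lem:starlog} the wait for the star at $u$ contributes expectation $O(\log \deg u) = O(\log n)$. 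Let $X$ be the total stochastic excess beyond $\ptp(T,Z)$. Then $\e[X] = O(\log n)$, and Markov's inequality gives $\pr(X \ge \e[X]/(1-\alpha)) \le 1-\alpha$, so with probability at least $\alpha$ the tree is entirely blue by round $\ptp(T,Z) + O(\log n/(1-\alpha))$. Consequently $\ptpf(T,Z,\alpha) \le \ptp(T,Z) + O(\log n)$, and hence $\thr(T,\alpha) \le \thp(T) + O(\log n)$ with the implicit constant depending on $\alpha$.

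The main subtlety is the case $u \notin Z$: the random wait for $u$'s neighbors to fill in occurs in the middle of the otherwise deterministic second phase rather than at the start, so the stochastic and deterministic segments are interleaved. The cleanest fix is to lump the two waits into a single random variable $X$ capturing \emph{all} slack beyond $\ptp(T,Z)$, and apply Markov's inequality once at the end; the interleaving of phases then does not affect the bound.
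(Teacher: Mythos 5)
Your proposal is correct and follows essentially the same route as the paper, which proves this lemma by mirroring the argument for Proposition 6.3 and applying Markov's inequality to the sum of the two waiting times (for the initial blue vertices to acquire blue neighbors and for the body vertex's neighbors to be forced). Your explicit treatment of the lower bound via the fact that no realization on a tree can beat PSD propagation, and your lumping of the two stochastic waits into a single slack variable before applying Markov, are just slightly more detailed versions of what the paper leaves implicit.
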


\section{Concluding remarks}\label{s:conc}

In \cite{KY13} Kang and Yi provide key definitions for   probabilistic zero forcing:  the probability of a force \eqref{eq:pforce} and the concept of a round. 
Here we use these definitions to begin the study of expected propagation time, $\ell$-round probability, probabilistic throttling, and confidence propagation and throttling.  
The fact that so many results from expected propagation time can be applied to obtain results on $\ell$-round probability,  confidence propagation, and probabilistic throttling provides additional evidence that expected propagation time is a parameter worthy of study.

Many questions about expected propagation time remain, including Question \ref{q:mon} and the determination of an asymptotically tight upper bound for all graphs of order $n$.  Another question that may be of interest is how to identify a vertex $u\in V(G)$ such that $\ept(u)\le \ept(v)$ for all $v\in V(G)$.  Also, the idea in Corollary \ref{cor:domnumber} of using a dominating set could be extended to a distance dominating set. 


\end{document}